\definecolor{citegreen}{rgb}{0,0.6,0}
\definecolor{refred}{rgb}{0.8,0,0}
\theoremstyle{plain}
\newtheorem{thm}{Theorem}[section]
\newtheorem{lem}[thm]{Lemma}
\newtheorem{proposition}[thm]{Proposition}
\theoremstyle{remark}
\newtheorem{rem}[thm]{Remark}
\newtheorem{remark}[thm]{Remark}
\theoremstyle{definition}
\newtheorem{definition}[thm]{Definition}
\numberwithin{equation}{section} 
\def\OO{{\mathrm{O}}}
\def\II{{\mathrm{I}}}
\newcommand{\T}{\mathbb{T}}
\newcommand{\R}{\mathbb{R}}
\newcommand{\N}{\mathbb{N}}
\def\Z{\mathbb Z}
\newcommand{\SSS}{\mathbb S}
\newcommand{\RRR}{{\mathrm R}}
\newcommand{\Riem}{{\mathrm {Riem}}}
\newcommand{\Lebes}{\mathscr{L}}
\DeclarePairedDelimiter{\norma}{\lVert}{\rVert}
\DeclarePairedDelimiter{\abs}{\lvert}{\rvert}
\newcommand{\E}{\mathcal E}
\newcommand{\dmu}{d \mu}
\newcommand{\pa}{\partial}
\newcommand{\HHH}{\mathrm{H}} 
\newcommand{\BBB}{\mathrm{B}} 
\newcommand{\vol}{\mathrm{Vol}}
\newcommand{\grad}{\nabla}
\newcommand{\eps}{\varepsilon}
\newcommand{\medint}{-\kern -,375cm\int}
\newcommand{\medintinrigo}{-\kern -,315cm\int}
\newcommand{\Htilde}{\widetilde{H}}
\newcommand{\beq}{\begin{equation}}
\newcommand{\eeq}{\end{equation}}
\def\Div{\operatorname*{div}\nolimits}
\newcommand{\A}{\mathcal A}
\def\Ri{\R^{n}}
\def\bbigstar{\operatornamewithlimits{\text{\Large{$\circledast$}}}}
\def\pol{{\mathfrak{p}}}
\def\qol{{\mathfrak{q}}}
\newcommand{\Tort}{T^{\perp}}
\DeclareRobustCommand{\rchi}{{\mathpalette\irchi\relax}}
\newcommand{\irchi}[2]{\raisebox{\depth}{$#1\chi$}}
\title{Stability for the Surface Diffusion Flow}
\author[A.~Diana]{Antonia Diana}
\address{A.~Diana, Scuola Superiore Meridionale, Italy}
\email{antonia.diana@unina.it}
\author[N.~Fusco]{Nicola Fusco}
\address{N.~Fusco, Universit\`a di Napoli Federico II \& Scuola Superiore Meridionale, Italy}
\email{n.fusco@unina.it}
\author[C.~Mantegazza]{Carlo Mantegazza}
\address{C.~Mantegazza, Universit\`a di Napoli Federico II \& Scuola Superiore Meridionale, Italy}
\email{carlo.mantegazza@unina.it}
\date{\today}
\begin{document}

\begin{abstract} We study the global existence and stability of surface diffusion flow (the normal velocity is given by the Laplacian of the mean curvature) of smooth boundaries of subsets of the $n$--dimensional flat torus. More precisely, we show that if a smooth set is ``close enough'' to a strictly stable critical set for the Area functional under a volume constraint, then the surface diffusion flow of its boundary hypersurface exists for all time and asymptotically converges to the boundary of a ``translated'' of the critical set. This result was obtained in dimension $n=3$ by Acerbi, Fusco, Julin and Morini in~\cite{AcFuMoJu} (extending previous results for spheres of Escher, Mayer and Simonett~\cite{escmaysim}, Wheeler~\cite{Wheeler,Wheeler:2013aa} and Elliott and Garcke~\cite{EllGar}). Our work generalizes such conclusion to any dimension $n\in\N$. For sake of clarity, we show all the details in dimension $n=4$ and we list the necessary modifications to the quantities involved in the proof in the general $n$--dimensional case, in the last section.
\end{abstract}

\maketitle

\tableofcontents

\section{Introduction}

Given a smooth immersed hypersurface in an $n$--dimensional flat torus $\varphi=\varphi_0:M\to \T^n$ (or in $\Ri$), we say that a smooth family of smooth embeddings $\varphi_t:M\to \T^n$, for $t \in[0,T)$, is a {\em surface diffusion flow} for $\varphi_0$ if 
\begin{equation}\label{sdf2}
\frac{\partial\varphi_t}{\partial t}=(\Delta\HHH)\nu\,,
\end{equation}
that is, the outer normal velocity (here $\nu$ is the outer normal) of the moving hypersurfaces is given by the Laplacian (in the induced metric) $\Delta\HHH$ of the mean curvature, at every point and time. Such flow was first proposed by Mullins in~\cite{Mullins} to study thermal grooving in material sciences and first analyzed mathematically in more detail in~\cite{escmaysim}. In particular, in the physically relevant case of three--dimensional space, it describes the evolution of interfaces between solid phases of a system, driven by surface diffusion of atoms under the action of a chemical potential (see for instance~\cite{GurJab}).

Studying the flow in a flat torus $\T^n$, described as the quotient of $\R^n$ by a discrete group of translations generated by some $n$ linearly independent vectors, is equivalent to consider the flow of ``periodic'' hypersurfaces, invariant by such group of translations. Then, it is clear that our analysis also applies to compact hypersurfaces in $\R^n$ or, more in general, in any (generalized) ``cylinder'' $\SSS^1\times\dots\times\SSS^1\times\R\times\dots\times\R$ of dimension $n$, with a flat metric.

Notice that, by the general equality $\Delta\varphi=-\HHH\nu$ (see equation~\eqref{lap} below), the system~\eqref{sdf2} can be rewritten as
\begin{equation}\label{paraeq}
\frac{\partial\varphi_t}{\partial t}=-\Delta\Delta\varphi_t+\text{ lower order terms}
\end{equation}
hence, it is a fourth order, {\em quasilinear} and {\em degenerate}, parabolic system of PDEs. Indeed, it is quasilinear, as the coefficients (as second order partial differential operator) of the Laplacian associated to the induced metrics on the evolving hypersurfaces, depend on the first order derivatives of $\varphi_t$ (and the coefficient of $\Delta\Delta$ on the third order derivatives) and the operator at the right hand side of system~\eqref{sdf2} is degenerate, as its symbol (the symbol of the linearized operator) admits zero eigenvalues, due to the invariance of the Laplacian by diffeomorphisms. The lack of maximum principle, as the flow is of fourth order, implies that it does not preserve convexity (see~\cite{Ito}), nor the embeddedness (see~\cite{gigaito1}), indeed it also does not have a ``comparison principle'', while it is invariant by isometries of $\T^n$, reparametrizations and tangential perturbations of the velocity of the motion. Moreover, when it is restricted to closed embedded hypersurfaces which are boundaries of sets, the enclosed volume is preserved and actually it can be regarded as the $\widetilde{H}^{-1}$--gradient flow of the {\em volume--constrained Area functional} (see~\cite{DDMsurvey} or~\cite{Gar}, for instance). 

There holds the following short time existence and uniqueness theorem (and also of dependence on the initial data) for the surface diffusion flow starting from a smooth hypersurface, proved by Escher, Mayer and Simonett in~\cite{escmaysim}, which should be expected by the parabolic nature of the system~\eqref{sdf2}, as shown by formula~\eqref{paraeq}. The original result deals with the evolution in the whole space $\R^n$ of a generic hypersurface, even only immersed, hence possibly with self--intersections. It is anyway straightforward to adapt the same arguments to our case, when the ambient is a flat torus $\T^n$ (and the hypersurfaces are boundaries of sets).

\begin{thm}\label{th:EMS0}
Let $\varphi_0:M\to\R^n$ be a smooth and compact, immersed hypersurface. Then, there exists a unique smooth $\varphi:[0,T)\times M\to\R^n$ such that $\varphi_t=\varphi(t,\cdot)$ is the surface diffusion flow of $\varphi_0$, that is a solution of equation~\eqref{sdf2}, for some maximal time of existence $T>0$. Moreover, such flow and the maximal time of existence depend continuously on the $C^{2,\alpha}$--norm of the initial hypersurface $\varphi_0$.
\end{thm}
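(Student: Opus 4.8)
The plan is to reduce the evolution of immersed hypersurfaces to a standard quasilinear parabolic problem for a height function, solve that by analytic semigroup theory, and then transfer the conclusion back to the geometric setting.

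First I would set up the parabolic framework. Fixing a reference hypersurface $\varphi_0:M\to\R^n$, I write nearby hypersurfaces as normal graphs $\varphi_u = \varphi_0 + u\,\nu_0$ over $\varphi_0$, where $u:M\to\R$ is a scalar function in a tubular neighborhood. Plugging this ansatz into~\eqref{sdf2} and using the tangential-reparametrization invariance of the flow (so that one may replace the genuinely geometric motion by the purely normal one up to a diffeomorphism), the equation becomes a fourth-order scalar quasilinear equation $\partial_t u = -a(u,\nabla u)\Delta^2 u + F(u,\nabla u,\nabla^2 u,\nabla^3 u)$ with $a>0$, where all coefficients depend smoothly on their arguments via the induced metric, second fundamental form, and their covariant derivatives. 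This is exactly the structure anticipated by~\eqref{paraeq}. The linearization at $u=0$ is $-a_0\Delta^2$, an operator which is sectorial and generates an analytic semigroup on the relevant little-Hölder or Sobolev scale over the compact manifold $M$.

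Next I would invoke the standard theory for quasilinear parabolic equations of maximal regularity / analytic semigroup type — e.g. the results of Amann, or the approach of Da Prato–Grisvard–Lunardi, or the continuous-maximal-regularity framework used in~\cite{escmaysim}. Working in the little-Hölder space $h^{2,\alpha}(M)$ as the base space (so that the domain $h^{4,\alpha}(M)$ is densely and compactly embedded) and taking the initial datum $u_0$ in the intermediate space $h^{2+\alpha}(M)$, one gets: existence of a unique maximal classical solution $u\in C([0,T);h^{2+\alpha})\cap C^1((0,T);h^{\alpha})\cap C((0,T);h^{4+\alpha})$, smoothing in time for $t>0$ by a bootstrap on the parabolic equation (differentiating in time and using Schauder estimates repeatedly), hence smoothness of the flow in space and time, and continuous dependence of both the solution and the maximal existence time $T$ on $u_0$ in the $h^{2+\alpha}$-topology — and thus on the $C^{2,\alpha}$-norm of $\varphi_0$, since the passage from $\varphi_0$ to its graph representation over a fixed smooth reference is continuous in these norms. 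The only additional point is the translation to the torus $\T^n$: since $\T^n$ is locally isometric to $\R^n$ and the hypersurfaces in question are compact, the same tubular-neighborhood construction and the same local computations go through verbatim, the ambient flatness making no difference.

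The main obstacle I expect is the genuine degeneracy of the geometric system noted after~\eqref{paraeq}: the symbol of the right-hand side of~\eqref{sdf2}, viewed as an operator on vector-valued maps $\varphi_t$, has a large kernel coming from diffeomorphism invariance, so one cannot apply parabolic theory directly to~\eqref{sdf2}. The resolution is precisely the reduction to the scalar normal-graph equation described above, sometimes called the "DeTurck trick" analogue for this flow: one breaks the reparametrization gauge by insisting on normal graphs, obtaining a nondegenerate (strongly) parabolic scalar equation, solves it, and then observes that the resulting family of embeddings solves~\eqref{sdf2} up to a time-dependent tangential diffeomorphism, which can be absorbed. A secondary technical point is checking that the coefficient $a$ stays strictly positive along the flow, which holds because $u$ remains small in $C^1$ on a short time interval by continuity, keeping the graph inside the tubular neighborhood where the construction is valid; uniqueness then follows from the uniqueness in the scalar problem together with the uniqueness of the normal-graph representation of a hypersurface $C^1$-close to $\varphi_0$.
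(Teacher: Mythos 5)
The paper does not prove this theorem itself but cites Escher, Mayer and Simonett~\cite{escmaysim}, and your proposal is a correct reconstruction of exactly their argument: reduce to a fourth--order scalar quasilinear equation for the height function over a reference hypersurface, thereby gauging away the diffeomorphism degeneracy of~\eqref{sdf2}, and then apply continuous maximal regularity / analytic semigroup theory in little--H\"older spaces (taking initial data in $h^{2+\alpha}$ and the base/domain pair $h^{\alpha}$, $h^{4+\alpha}$) to obtain a local semiflow, with parabolic smoothing giving smoothness and the semiflow structure giving continuous dependence of both the solution and the maximal time. Your identification of the key obstruction (the degenerate symbol of~\eqref{sdf2}) and of its resolution by normal--graph gauge fixing, and your check that the leading coefficient stays positive for short time, are precisely the points that make the reduction legitimate, so the proposal is sound and takes essentially the same route as the cited reference.
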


Actually, it is very likely, even if an explicit example is not present in literature (up to our knowledge), that this flow could develop singularities in finite time (as the mean curvature flow). Anyway, in the same mentioned paper~\cite{escmaysim} by Escher, Mayer and Simonett, the authors also showed that if the initial hypersurface is $C^{2,\alpha}$--close enough to a sphere with the same enclosed volume, then the flow exists for every time and converges smoothly to a translate of such sphere. The analogous result was obtained by Wheeler in~\cite{Wheeler}, for surfaces and in~\cite{Wheeler:2013aa}, for closed plane curves (see also the work of Elliott and Garcke~\cite{EllGar} for curves) with a weaker initial $W^{2,2}$--closedness condition and by Escher and Mucha in a previous work~\cite{eschmuch} with a Besov--type condition. Furthermore, in~\cite{wheeler22} Wheeler showed that any surface diffusion flow of curves that exists for all time, must converge smoothly, exponentially fast to a multiply--covered circle. We also mention a work by Miura e Okabe~\cite{miuraokabe19} where the authors proved a global existence result provided that an initial curve is $W^{2,2}$--close to a multiply covered circle and sufficiently rotationally symmetric. Later on, Acerbi, Fusco, Julin and Morini in~\cite{AcFuMoJu} extended these results, in dimensions two and three, to hypersurfaces close to boundaries of {\em strictly stable critical sets} (that we are going to define in a while) for the volume constrained Area functional (as it is every ball). Our aim in this work is to generalize such stability conclusion to any dimension $n\in\N$. Because of several heavy analytic and algebraic computations needed in the analysis, we present in full detail the proof in dimension $n=4$ and we list the appropriate modifications for the general $n$–dimensional case in the last section. The choice of $n=4$ is not merely pragmatic: this dimension already exhibits the full range of analytic and algebraic difficulties arising in higher dimensions, while still allowing for a complete and reasonably transparent presentation. Carrying out all the computations in general dimension would have made the exposition unnecessarily technically heavy. For this reason, we restrict to $n=4$ as a representative case, while the explicit and detailed computations and proofs in full generality can be found in the PhD thesis of Antonia Diana~\cite{DianaPhD}.
\section{Preliminaries}

We introduce the basic notations and facts about hypersurfaces that we need in the paper, possible references are~\cite{gahula} or the first part of~\cite{petersen1}.

We will consider closed smooth hypersurfaces in the $n$--dimensional torus $\T^n\!\approx\R^n/\Z^n$ or in $\R^n$, given by smooth immersions $\varphi:M \to \T^n$ of a smooth, $(n-1)$--dimensional, compact manifold $M$, representing a hypersurface $\varphi(M)$ of $\T^n$. Taking local coordinates around any $p\in M$, we have local bases of the tangent space $T_p M$, which can be identified with the $(n-1)$--dimensional hyperplane $d\varphi_p(T_pM)$ of $\R^n\approx T_{\varphi(p)}\T^n$ which is tangent to $\varphi(M)$ at $\varphi(p)$ and of the cotangent space $T_p^{*}M$, respectively given by vectors $\bigl\{\frac{\partial\,}{\partial x_i}\bigr\}$ and 1--forms $\{dx_j\}$. So, we will denote vectors on $M$ by $X=X^i$, which means 
$X=X^i\frac{\partial\,}{\partial x_i}$, covectors by
$Y=Y_j$, that is, $Y=Y_jdx_j$ and a general mixed tensor with
$T=T^{i_1\dots i_k}_{j_1\dots j_l}$.

\smallskip

{\em In the whole paper the convention to sum over repeated 
indices will be adopted.}

\smallskip

Sometimes we will need also to consider tensors along
$M$, viewing it as a submanifold of $\T^n$ or $\Ri$ via the map $\varphi$, in
that case we will use the Greek indices to denote the components of
such tensors in the canonical basis $\{e_\alpha\}$ of $\Ri$, for
instance, given a vector field $X$ along $M$, not necessarily tangent,
we will have $X=X^\alpha e_\alpha$.

The manifold $M$ gets in a natural way a metric tensor $g$, pull--back via the map $\varphi$ of the metric tensor of $\T^n$, coming from the standard scalar product $\langle\cdot\,|\,\cdot\rangle$ of $\R^n$, hence, turning it into a Riemannian manifold $(M,g)$. Then, the components of $g$ in a local chart are
$$
g_{ij}=\left \langle\frac{\pa \varphi}{\pa x_i}\,\Big \vert \frac{\pa \varphi}{\pa x_j}\right \rangle
$$
and the ``canonical'' measure $\mu$, induced on $M$ by the metric $g$ is then locally described by $\mu=\sqrt{\det g_{ij}}\,{\Lebes}^{n-1}$, where ${\Lebes}^{n-1}$ is the standard Lebesgue measure on $\R^{n-1}$.\\
The inner product on $M$, extended to tensors, is given by
$$
g(T , S)=g_{i_1s_1}\dots g_{i_k s_k}g^{j_1
z_1}\dots g^{j_l z_l} T^{i_1\dots
 i_k}_{j_1\dots j_l}S^{s_1\dots
 s_k}_{z_1\dots z_l}
$$
where $g_{ij}$ is the matrix of the coefficients of the metric
tensor in the local coordinates and $g^{ij}$ is its 
inverse. Clearly, the norm of a tensor is then
$$
\vert T\vert=\sqrt{g(T , T)}\,.
$$

The induced Levi--Civita covariant derivative on $(M,g)$ of a vector field $X$ and of a 1--form $\omega$ 
are respectively given by
$$
\nabla _jX^i=\frac{\partial X^i}{\partial
 x_j}+\Gamma^{i}_{jk}X^k\,, \qquad \nabla _j\omega_i=\frac{\partial \omega_i}{\partial
 x_j}-\Gamma^k_{ji}\omega_k\,, 
$$
where $\Gamma^{i}_{jk}$ are the Christoffel symbols of the connection $\nabla$, expressed
by the formula
$$
\Gamma^{i}_{jk}=\frac{1}{2} g^{il}\Bigl(\frac{\partial\,}{\partial
 x_j}g_{kl}+\frac{\partial\,}{\partial
 x_k}g_{jl}-\frac{\partial\,}{\partial
 x_l}g_{jk}\Bigr)\,.
$$
The covariant derivative $\nabla T$ of a tensor
$T=T^{i_1\dots i_k}_{j_1\dots j_l}$ will be denoted by 
$\nabla_sT^{i_1\dots i_k}_{j_1\dots j_l}=(\nabla T)^{i_1\dots
 i_k}_{sj_1\dots j_l}$ and with $\nabla^m T$ we will mean the $m$--th iterated covariant
derivative of a tensor $T$.

The gradient $\nabla f$ of a function, the divergence $\Div X$ of a tangent vector field and the Laplacian $\Delta f$ at a point $p \in M$, are defined respectively by
$$
g(\nabla f(p) , v)=df_p(v)\qquad\forall v\in T_p M\,,
$$
\begin{equation}\label{divformcar}
\Div X={\mathrm {tr}} \nabla X=\nabla _iX^i=\frac{\partial X^i}{\partial x_i}+\Gamma^{i}_{ik}X^k
\end{equation}
(in a local chart) and $\Delta f =\Div\nabla f$. The Laplacian $\Delta T$ of a tensor $T$ is $\Delta T=g^{ij}\nabla_i\nabla_jT$.
We then recall that by the {\em divergence theorem} for compact manifolds (without boundary), there holds
\begin{equation}\label{divteo}
\int_{M}\Div X\,d\mu=0\,,
\end{equation}
for every tangent vector field $X$ on $M$, which in particular implies
\begin{equation}\label{corollariodivteo}
\int_{M}\Delta f\,d\mu=0\,,
\end{equation}
for every smooth function $f:M \to\R$.

Assuming that we have a globally defined unit {\em normal} vector field $\nu:M\to\R^n$ to $\varphi(M)$ (this will hold in our situation where the hypersurfaces are boundaries of sets $E\subseteq\T^n$, hence we will always consider $\nu$ to be the {\em outer unit normal vector} at every point of $\pa E$), we define the {\em second fundamental form} $\BBB$ which is a symmetric bilinear form given, in a local charts, by its components
\begin{equation}\label{secform}
h_{ij} = - \biggl \langle \frac{\pa^2 \varphi}{\pa x_i \pa x_j}\,\biggr\vert \,\nu \biggr \rangle
\end{equation}
and whose trace is the {\em mean curvature} $\HHH= \mathrm{tr} \,  \BBB= g^{ij} h_{ij}$ of the hypersurface (with these choices, the standard sphere of $\R^n$ has positive mean curvature).

\begin{rem}\label{graph}
If the hypersurface $M\subseteq\T^n$ is the graph of a function $f:U\to\R$ with $U$ an open subset of $\R^{n-1}$, that is, $\varphi(x)=(x,f(x))$, then we have 
$$
g_{ij}=\delta_{ij}+\frac{\partial f}{\partial x_i}\frac{\partial f}{\partial x_j}\,,\qquad\qquad \nu=-\frac{(\nabla f,-1)}{\sqrt{1+\vert\nabla f\vert^2}}
$$
$$
h_{ij}=-\frac{{\mathrm {Hess}}_{ij}f}{\sqrt{1+\vert\nabla f\vert^2}}
$$
$$
\HHH=-\frac{\Delta f}{\sqrt{1+\vert\nabla f\vert^2}}
+\frac{{\mathrm {Hess}}f(\nabla f,\nabla f)}{\big(\sqrt{1+\vert\nabla f\vert^2}\big)^3}=-
\Div\biggl(\frac{\nabla f}{\sqrt{1+\vert\nabla f\vert^2}}\biggr)
$$
where ${\mathrm {Hess}}f$ is the Hessian of the function $f$.
\end{rem}

Then, the following {\em Gauss--Weingarten relations} hold,
\begin{equation}\label{GW}
\frac{\pa^2\varphi}{\pa x_i\pa
 x_j}=\Gamma_{ij}^k\frac{\pa\varphi}{\pa
 x_k}- h_{ij}\nu\qquad\qquad\frac{\pa\nu}{\pa x_j}=
h_{jl}g^{ls}\frac{\pa\varphi}{\pa x_s}\,,
\end{equation}
which easily imply $\vert\nabla\nu\vert=\vert\BBB\vert$ and the identity
\begin{equation}
\Delta\varphi=g^{ij}\Bigl(\frac{\partial^2\varphi}{\partial x_i\partial
 x_j}-\Gamma_{ij}^k\frac{\partial\varphi}{\partial
 x_k}\Bigr)=-g^{ij}h_{ij}\nu=-\HHH\nu\,.\label{lap}
\end{equation}

The Riemann tensor is expressed via the second fundamental form as follows ({\em Gauss equations}),
\begin{equation}\label{Gauss-eq}
\RRR_{ijkl}\,=\,h_{ik}h_{jl}-h_{il}h_{jk}
\end{equation}
hence, the formulas for the interchange of covariant derivatives,
which involve the Riemann tensor, become
\begin{align}
\nabla_i\nabla_jX^s-\nabla_j\nabla_iX^s=&\,\RRR_{ijkl}g^{ks}X^l=\RRR_{ijl}^sX^l=
\big(h_{ik}h_{jl}-h_{il}h_{jk}\big)g^{ks}X^l\\
\nabla_i\nabla_j\omega_k-\nabla_j\nabla_i\omega_k=&\,\RRR_{ijkl}g^{ls}\omega_s=\RRR_{ijk}^s\omega_s=
\big(h_{ik}h_{jl}-h_{il}h_{jk}\big)g^{ls}\omega_s\label{ichange}
\end{align}
for every vector field $X$ and $1$--form $\omega$.

\medskip

We say that a set $E\subseteq\T^n$ is a {\em smooth set} if it is the closure of an open subset of $\T^n$ and its boundary $\partial E$ is a smooth embedded hypersurface (unless otherwise stated all the sets we are going to consider will be smooth). Then, for a smooth set $E \subseteq \T^n$ and $\eps>0$ small enough, we let ($d$ is the ``Euclidean'' distance on $\T^n$) 
\begin{equation}
\label{tubdef}
N_\eps=\{x \in \T^n \, : \, d(x,\pa E)<\eps\}
\end{equation}
to be a {\em tubular neighborhood} of $\pa E$ such that the {\em orthogonal projection map} $\pi_E:N_\eps\to \pa E$ giving the (unique) closest point on $\pa E$ and the {\em signed distance function} $d_E:N_\eps\to\R$ from $\pa E$
\begin{equation}\label{sign dist}
d_E(x)=
\begin{cases}
d(x, \pa E) &\text{if $x \notin E$}\\
-d(x, \pa E) &\text{if $x \in E$}
\end{cases} 
\end{equation}
are well defined and smooth in $N_\eps$ (for a proof of the existence of such tubular neighborhood and of all the subsequent properties, see~\cite{ManMen} for instance). Moreover, for every $x\in N_\eps$, the projection map is given explicitly by 
\begin{equation}\label{eqcar2050}
\pi_E(x)=x-\nabla d^2_E(x)/2=x-d_E(x)\nabla d_E(x)
\end{equation}
and the unit vector $\nabla d_E(x)$ is orthogonal to $\pa E$ at the point $\pi_E(x)\in\partial E$, indeed actually 
\begin{equation}\label{eqcar410bis}
\nabla d_E(x)=\nabla d_E(\pi_E(x))=\nu(\pi_E(x))\,,
\end{equation}
which means that the integral curves of the vector field $\nabla d_E$ are straight segments orthogonal to $\pa E$.\\ 
This clearly implies that the map 
\begin{equation}\label{eqcar410}
\partial E\times (-\eps,\eps)\ni (y,t)\mapsto L(y,t)=y+t\nabla d_E(y)=y+t\nu(y)\in N_\eps
\end{equation}
is a smooth diffeomorphism with inverse
$$
N_\eps\ni x\mapsto L^{-1}(x)=(\pi_E(x),d_E(x))\in \partial E\times (-\eps,\eps)\,.
$$
Moreover, denoting with $JL$ its Jacobian (relative to the hypersurface $\pa E$), there holds
\begin{equation}\label{eqcar411}
0<C_1\leqslant J L(y,t)\leqslant C_2
\end{equation}
on $\pa E\times(-\eps,\eps)$, for a couple of constants $C_1,C_2$, depending on $E$ and $\eps$.

\medskip 
{\em From now on, in all the rest of the work, with $N_\eps$ we will always denote a suitable tubular neighborhood of a smooth set, with the above properties.}
\medskip

By means of such tubular neighborhoods of smooth sets $E\subseteq\T^n$, we can speak of ``$W^{k,p}$--closedness'' (or of ``$C^{k}$--closedness'') of sets. Indeed, fixed a smooth set $E$, we say that $F,F'\subseteq\T^n$ are $\delta$--close in $W^{k,p}$ (or in $C^{k}$) for some $\delta>0$ ``small enough'' if, denoted by $F\triangle F'=(F \cup F')\setminus (F \cap F')$ the symmetric difference between $F$ and $F'$, we have $\vol(F\triangle F') < \delta$ and that $\partial F,\partial F'$ are contained in a tubular neighborhood $N_\eps$ of $E$ as above, described by 
$$
\pa F=\{y+\psi(y)\nu_E(y) \, : \, y\in \pa E\}\qquad\text{ and }\qquad\pa F'=\{y+\psi'(y)\nu_E(y) \, : \, y\in \pa E\},
$$
for two functions $\psi:\partial E\to\R$ with $\norma{\psi-\psi'}_{W^{k,p}(\pa E)}< \delta$ (respectively, $\norma{\psi-\psi'}_{C^{k}(\pa E)}< \delta$). That is, we are asking that the two sets $F$ and $F'$ differ by a set of small Lebesgue measure and that their boundaries are ``close'' in $W^{k,p}$ (or $C^{k}$) as graphs on $\partial E$.

Moreover, we can define the following families of hypersurfaces.
\begin{definition}\label{psiofF}
Given a smooth set $E\subseteq\T^n$ and a tubular neighborhood $N_\eps$ of $\pa E$, as in formula~\eqref{tubdef}, for any $M<\eps$, we denote by $\mathfrak{C}^1_M(E)$, the class of all sets $F\subseteq E\cup N_\eps$ such that $\vol(F\triangle E)\leqslant M$ and 
\begin{equation}\label{front}
\pa F=\{y+\psi_F(y)\nu_E(y):\, y\in \pa E \}\,,
\end{equation}
for some function $\psi_F\in C^1(\pa E)$, with $\norma{\psi_F}_{C^1(\pa E)}\leqslant M$ (hence, $\pa F\subseteq N_\eps$).
\end{definition}

\begin{definition}\label{convF}
Given a sequence of smooth sets $F_i\in\mathfrak{C}^{1}_M(E)$, for some smooth set $E\subseteq\T^n$, we will write $F_i\to F$ in $W^{k,p}$ if there exists $F\in\mathfrak{C}^1_M(E)$ such that for every $\delta>0$, if $i\in\N$ is large enough there holds $\vol(F_i\triangle F) < \delta$ and, describing the boundaries of $F_i,F$ as
$$
\pa F_i=\{y+\psi_i(y)\nu_E(y) \, : \, y\in \pa E\}\qquad\text{ and }\qquad\pa F=\{y+\psi(y)\nu_E(y) \, : \, y\in \pa E\},
$$
for some smooth function $\psi_i,\psi:\partial E\to\R$, we have $\norma{\psi_i-\psi}_{W^{k,p}(\pa E)}< \delta$.
\end{definition}

\section{The surface diffusion flow and strictly stable critical sets}\label{sec3}

In all the following $\T^n=\R^n/\!\!\sim$ is a flat $n$--dimensional torus, quotient of $\R^n$ by a discrete group of translations generated by some $n$ linearly independent vectors. Since we want to deal with the surface diffusion flow of embedded smooth hypersurfaces which are boundaries of smooth sets (recall that any of them is the closure of an open subset of $\T^n$), we give the following definition. 

\begin{definition}\label{def:SDsol}
Let $E\subseteq \T^n$ be a smooth set. We say that the family $E_t\subseteq\T^n$, for $t \in[0,T)$ with $E_0=E$, is a {\em surface diffusion flow} starting from $E$ if the map $t\mapsto\rchi_{E_t}$ is continuous from $[0,T)$ to $L^1(\T^n)$ and the hypersurfaces $\partial E_t$ move by surface diffusion, that is, there exists a smooth family of embeddings $\varphi_t:\partial E\to \T^n$, for $t \in[0,T)$, with $\varphi_0=\mathrm{Id}$ and $\varphi_t(\partial E)=\partial E_t$, such that
\begin{equation}\label{sdf2-0}
\frac{\partial\varphi_t}{\partial t}=(\Delta\HHH)\nu\,,
\end{equation}
where, at every point and time, $\HHH$ and $\Delta$ are respectively the mean curvature and the Laplacian (with the Riemannian metric induced by $\T^n$, that is, by $\R^n$) of the moving hypersurface $\pa E_t$, while $\nu$ is the ``outer'' normal to the smooth set $E_t$.
\end{definition}

\begin{remark} An alternative way to describe the flow is to speak of the sets ``enclosed'' by the boundary hypersurfaces moving by surface diffusion. This anyway would introduce an ambiguity, since every hypersurface $\partial E_t$ clearly ``separate'' $\T^n$ in components and one should indicate which ones are actually the sets $E_t$ at every time $t$. The use of the continuity of the map $t\mapsto\rchi_{E_t}$ is a way to avoid such ambiguity. Moreover, it follows easily that being the solution of the PDE system~\eqref{sdf2-0} unique, by Theorem~\ref{th:EMS0}, the sets $E_t$ are uniquely determined (being a ``geometric flow'', actually the same ``geometric'' uniqueness also holds for the hypersurfaces $\partial E_t$, like for the mean curvature flow, see~\cite[Section~1.3]{Manlib}).
\end{remark}

Then, we have the following proposition, which is actually Theorem~\ref{th:EMS0} ``adapted'' to the above definition.

\begin{proposition}\label{th:EMS1}
Given a smooth set $E\subseteq\T^n$ and a tubular neighborhood $N_\eps$ of $\pa E$, as in formula~\eqref{tubdef} and $M<\varepsilon$, for every $E_0\subseteq\T^n$ smooth set in $\mathfrak{C}^1_{M}(E)$, whose boundary $\pa E_0$ is represented by
\begin{equation}\label{front0}
\pa E_0=\{y+\psi_0(y)\nu_E(y):\, y\in \pa E \}
\end{equation}
for a smooth function $\psi_0:\pa E\to \R$, there exists a unique surface diffusion flow $E_t$, starting from $E_0$, determined by
\begin{equation}\label{front00}
\pa E_t=\{y+\psi_t(y)\nu_E(y):\, y\in \pa E \}
\end{equation}
with smooth functions $\psi_t:\pa E\to\R$, for $t$ in some maximal positive interval of time $[0,T(E_0))$, with $T(E_0)$ depending on the $C^{2,\alpha}$--norm of $\psi_0$.
\end{proposition}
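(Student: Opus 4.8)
\medskip
\noindent\textbf{Proof proposal.}
The plan is to deduce the statement from the short time existence result of Theorem~\ref{th:EMS0}, applied to the embedded hypersurface $\pa E_0$, and then to rewrite the resulting evolving hypersurfaces as normal graphs over the fixed reference hypersurface $\pa E$. Since $E_0\in\mathfrak{C}^1_M(E)$ is a smooth set, $\pa E_0$ is a smooth, compact, embedded hypersurface of $\T^n$; applying Theorem~\ref{th:EMS0} (adapted to the torus, as noted right after its statement) one gets a unique smooth family of embeddings $\varphi_t:\pa E_0\to\T^n$, $t\in[0,S)$ with $\varphi_0=\mathrm{Id}$, solving~\eqref{sdf2-0}, with maximal $S>0$ depending continuously on the $C^{2,\alpha}$--norm of $\pa E_0$, hence --- as $\pa E_0$ is the graph of $\psi_0$ over $\pa E$ --- on the $C^{2,\alpha}$--norm of $\psi_0$. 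Since embeddedness is an open condition and $\T^n$ is compact, for $t$ small each $\pa E_t:=\varphi_t(\pa E_0)$ is a smooth compact embedded hypersurface separating $\T^n$ into open pieces, and I would let $E_t$ be the (closure of the) piece depending continuously on $E_0$; this is meaningful on a short time interval precisely because $t\mapsto\varphi_t$ is continuous in $C^1$, so $\rchi_{E_t}\to\rchi_{E_0}$ in $L^1(\T^n)$ as $t\to0^+$.

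Next I would check that for small $t$ the hypersurface $\pa E_t$ is still a normal graph over $\pa E$ inside $N_\eps$. Since $\norma{\psi_0}_{C^1(\pa E)}\le M<\eps$ and $t\mapsto\varphi_t$ is continuous in $C^\infty$, there is $T=T(E_0)>0$ such that, for every $t\in[0,T)$, one has $\pa E_t\subseteq N_\eps$ and the restriction to $\pa E_t$ of the smooth projection $\pi_E:N_\eps\to\pa E$ of~\eqref{eqcar2050} is a diffeomorphism onto $\pa E$: indeed this restriction equals $\Phi_0^{-1}$, where $\Phi_0(y)=y+\psi_0(y)\nu_E(y)$, at $t=0$, and this property is stable under small $C^1$ perturbations of $\pa E_t$, using the Jacobian bounds~\eqref{eqcar411}. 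Then, setting $\psi_t:=d_E\circ\bigl(\pi_E|_{\pa E_t}\bigr)^{-1}:\pa E\to\R$, relation~\eqref{eqcar410bis} gives $\pa E_t=\{y+\psi_t(y)\nu_E(y):y\in\pa E\}$; moreover, since $\varphi_t$ depends smoothly on $(t,p)$ while $\pi_E$ and $d_E$ are smooth on $N_\eps$, the map $(t,y)\mapsto\psi_t(y)$ is smooth, so each $\psi_t$ is smooth and coincides at $t=0$ with the given $\psi_0$. The continuity of $t\mapsto\rchi_{E_t}$ in $L^1(\T^n)$ then follows from the uniform continuity of $t\mapsto\psi_t$ together with~\eqref{eqcar411}, which give $\vol(E_t\triangle E_s)\le C_2\int_{\pa E}\abs{\psi_t-\psi_s}\,d\mu$. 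I would then take $T(E_0)$ to be the supremum of the times for which all of the above holds, so that $E_t$, $t\in[0,T(E_0))$, is a surface diffusion flow from $E_0$ in the sense of Definition~\ref{def:SDsol}; and $T(E_0)$ depends continuously on $\norma{\psi_0}_{C^{2,\alpha}(\pa E)}$, both through the time $S$ given by Theorem~\ref{th:EMS0} and through the smallness of $\norma{\psi_0}_{C^1}$ needed to keep $\pa E_t$ graph--like over $\pa E$.

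For the uniqueness, I would argue as follows. Let $E_t'$, $t\in[0,T')$, be another surface diffusion flow from $E_0$, with associated embeddings $\varphi_t':\pa E_0\to\T^n$. The hypersurfaces $\pa E_t'=\varphi_t'(\pa E_0)$ move by surface diffusion and start from $\pa E_0$, hence, by the geometric uniqueness of the surface diffusion flow of hypersurfaces --- which follows from the uniqueness in Theorem~\ref{th:EMS0} up to tangential reparametrisation, see the remark after Definition~\ref{def:SDsol} and~\cite[Section~1.3]{Manlib} --- one has $\pa E_t'=\pa E_t$ as subsets of $\T^n$, for all $t$ in the common interval of existence. Finally, since $t\mapsto\rchi_{E_t}$ and $t\mapsto\rchi_{E_t'}$ are continuous into $L^1(\T^n)$, agree at $t=0$, and for each $t$ both $E_t$ and $E_t'$ are (closures of) unions of connected components of $\T^n\setminus\pa E_t$ with the same boundary $\pa E_t$, the connectedness of the time interval forces $\rchi_{E_t}=\rchi_{E_t'}$ for all $t$, hence $E_t=E_t'$; in particular $T'\le T(E_0)$ by maximality.

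I expect the main obstacle to be this passage from the ``intrinsic'' flow $\varphi_t$ of $\pa E_0$ provided by Theorem~\ref{th:EMS0} to its description as a normal graph $\psi_t$ over the fixed hypersurface $\pa E$: one has to quantify, in terms of $\norma{\psi_0}_{C^{2,\alpha}}$, for how long $\pa E_t$ stays inside $N_\eps$ and transverse to the fibres of $\pi_E$, which is exactly what produces the claimed dependence of $T(E_0)$ on the $C^{2,\alpha}$--norm of $\psi_0$. The uniqueness part should instead reduce, in a routine way, to the known geometric uniqueness of the hypersurface evolution, together with the continuity of $t\mapsto\rchi_{E_t}$ that is built into Definition~\ref{def:SDsol}.
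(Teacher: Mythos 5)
Your proposal is correct and is exactly the elaboration the paper leaves implicit: the paper gives no proof of Proposition~\ref{th:EMS1}, merely remarking that it is Theorem~\ref{th:EMS0} ``adapted'' to Definition~\ref{def:SDsol}, and you carry out that adaptation in the natural way (apply Theorem~\ref{th:EMS0} to $\pa E_0$, then re\"express $\pa E_t$ as a normal graph over $\pa E$ via $\pi_E$ and $d_E$, using~\eqref{eqcar410bis} and~\eqref{eqcar411}). One small simplification in the uniqueness step: since Definition~\ref{def:SDsol} already fixes $\varphi_0=\Id$ and prescribes the velocity to be purely normal, any two admissible families $\varphi_t,\varphi_t'$ solve the same parametric Cauchy problem and coincide directly by the PDE uniqueness in Theorem~\ref{th:EMS0}, so the appeal to ``geometric uniqueness up to tangential reparametrisation'' is not actually needed.
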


We now define the {\em strictly stable critical sets} and we state our main theorem, we refer to~\cite[Section~2]{DDMsurvey} for the following facts.

We consider the {\em Area functional} $\A(\partial E)$ on the family of smooth sets $E\subseteq\T^n$, giving the $(n-1)$--dimensional ``area'' of the boundary of $E$, with a constraint on the ($n$--dimensional) volume $\vol(E)$. It is then well known that a set $E$ is a {\em critical set} (that is, with zero {\em constrained first variation}) if and only if its boundary satisfy $\HHH=\lambda$, for some constant $\lambda\in\R$. This is obtained, by ``testing'' the first variation of the Area functional with all the {\em volume--preserving variations} of $E$, which turns out to have as infinitesimal generators, vector fields $X$ on $\partial E$ satisfying $X=\psi\nu_E$ with $\psi\in C^{\infty}(\pa E)$ such that $\int_{\pa E} \psi \, d\mu =0$.\\
Then, at a critical set $E$, the second variation of the volume--constrained Area functional along such vector fields $X=\psi\nu_E$ on $\partial E$ is given by
\begin{equation}
\frac{d^2 }{dt^2}\A(\pa E_t)\Bigl|_{t=0} =\int_{\pa E}\bigl(|\nabla \psi|^2- \psi^2|\BBB|^2\bigr)\, d\mu\,.
\end{equation}
This motivates the following definition.

\begin{definition}\label{Pi}
Given any smooth open set $E\subseteq \T^n$, we define the quadratic form 
\beq\label{quadraticform}
\Pi_E(\psi)= \int_{\pa E} \big (\abs{\grad \psi}^2 - \psi^2\abs{\BBB}^2 \big ) \, \dmu \, ,
\eeq
for all $\psi$ in the space of Sobolev functions
$$
\widetilde H^1 (\pa E) = \Bigl\{ \psi \in H^1(\pa E) \, : \, \int_{\pa E} \psi \, \dmu =0 \Bigr\}\,.
$$
\end{definition}

\smallskip

{\em From now on we will extensively use Sobolev spaces on smooth hypersurfaces. Most of their properties hold as in $\R^n$, standard references are~\cite{AdamsFournier} in the Euclidean space and~\cite{aubin0} on manifolds.}

\medskip

We notice that for every $\eta\in \R^n$, we have $\langle\eta\,|\,\nu_E\rangle\in \widetilde H^1 (\pa E)$, as in general, for every smooth set $E\subseteq\T^n$, there holds
\beq\label{divnorm}
\int_{\pa E} \langle\eta\,|\,\nu_E\rangle \, \dmu=\int_{E} \Div\eta\,dx=0\,,\qquad\text{ hence also}\qquad
\int_{\pa E} \nu_E\, \dmu=0\,.
\eeq
Then, setting $E_t=E+t\eta$, by the translation invariance of the area functional $\A$, we have $\A(\partial E_t)=\A(\partial E)$, thus
\begin{equation}
0=\frac{d^2}{dt^2} \A(\partial E_t) \Bigr \vert_{t=0}= \Pi_E (\langle\eta\,|\,\nu_E\rangle)\,,
\end{equation}
that is, the form $\Pi_E$ is zero on the vector subspace 
\begin{equation}\label{T}
T(\pa E) = \bigl\{\langle\eta\,|\,\nu_E\rangle\, : \, \eta\in \R^n \bigr\}\subseteq \Htilde^1(\pa E)
\end{equation}
of dimension clearly less than or equal to $n$ (and at least one).
Then, we split 
\begin{equation}\label{decomp}
\Htilde^1(\pa E) = T(\pa E) \oplus \Tort(\pa E)\,,
\end{equation}
where $\Tort (\pa E)\subseteq \Htilde^1(\pa E)$ is the vector subspace $L^2$--orthogonal to $T(\pa E)$ (with respect to the measure $\mu$ on $\partial E$), that is,
\begin{align}
\Tort(\pa E)
=&\,\Bigl \{\psi \in \Htilde^1(\pa E) \, : \, \int_{\pa E} \psi \nu_E
 \, \dmu = 0\Bigr \}\\
=&\,\Bigl \{\psi \in H^1(\pa E) \, : \,\int_{\pa E} \psi \, \dmu = 0 \,\,\text{ and }\, \int_{\pa E} \psi \nu_E \, \dmu = 0\Bigr \}\label{Tort} 
\end{align}
and we give the following ``stability'' conditions.

\begin{definition}\label{str stab}
We say that a critical set $E \subseteq\T^n$ for $\A$ under a volume constraint is {\em stable} if
\begin{equation}\label{stabile}
\Pi_E (\psi) \geqslant 0 \qquad\text{for all $\psi \in \Htilde^1(\pa E)$}
\end{equation}
and {\em strictly stable} if actually
\begin{equation}\label{strettstabile}
\Pi_E (\psi) > 0 \qquad\text{for all $\psi \in \Tort(\pa E) \setminus \{0 \}$.}
\end{equation}
\end{definition}

As one can easily guess, these stability notions are related to the (sufficient and necessary) local minimality properties of a set for the volume--constrained Area functional, as it is shown in~\cite{AcFuMoJu} (see also the discussion in~\cite[Section~2.2]{DDMsurvey}).

We observe that there exists an orthonormal frame $\{e_1, \dots, e_n \}$ of $\R^n$ such that
\begin{equation}\label{ort}
\int_{\pa E} \langle \nu_E | e_i \rangle \langle \nu_E | e_j \rangle \, \dmu=0,
\end{equation}
for all $i \ne j$, indeed, considering the symmetric $n\times n$--matrix $A= (a_{ij})$ with components $a_{ij}= \int_{\pa E} \nu_E^i \nu_E^j \, \dmu$, where $\nu_E^i=\langle\nu_E|\eps_i\rangle$ for some basis $\{\eps_1,\dots,\eps_n\}$ of $\R^n$, we have
\begin{equation}
\int_{\pa E} (O\nu_E)_i (O \nu_E)_j \, \dmu = (OAO^{-1})_{ij},
\end{equation}
for every $O \in SO(n)$. Choosing $O$ such that $OAO^{-1}$ is diagonal and setting $e_i=O^{-1}\eps_i$, relations~\eqref{ort} are clearly satisfied. Hence, considering such basis, the functions $\langle \nu_E | e_i \rangle$ which are not identically zero are an orthogonal basis of $T(\pa E)$ and we set 
\begin{equation}\label{IIeq}
\II_E=\bigl\{i\in\{1,\dots,n\}\,:\,\text{$\langle\nu_E|e_i\rangle$ is not identically zero}\bigr\},\quad
\OO_E=\mathrm{Span}\{e_i\,:\,i\in \II_E\}.
\end{equation}

We observe that it is easy to see (by a dilation/contraction argument) that any strictly stable smooth critical set must be connected, but actually, Theorem~\ref{existence2} can be clearly applied also to finite unions of boundaries of strictly stable critical sets. Moreover, by the very definition above, if $\pa E$ in $\T^n$ is composed by flat pieces, hence its second fundamental form $\BBB$ is identically zero, the set $E$ is critical and stable and with a little effort, actually strictly stable. It is a little more difficult to show that any ball in any dimension $n\in\N$ is strictly stable (it is obviously a critical set), which is connected to the study of the eigenvalues of the Laplacian on the sphere $\SSS^{n-1}$, see~\cite[Theorem~5.4.1]{groemer}, for instance. Then, the same holds for all the ``cylinders'' $\R^k\times\SSS^{n-k-1}\subseteq\R^n$, bounding $E\subseteq\T^n$ after taking their quotient by the same equivalence relation defining $\T^n$, determined by the standard integer lattice of $\R^n$.

If $n=2$, it follows that the only bounded strictly stable critical sets of the {\em Length} functional (that is, the Area functional in $2$--dimensional ambient space) in the plane are the disks and in $\T^2$ they are the disks and the ``strips'' with straight borders. In the three--dimensional case, a first classification of the smooth stable ``periodic'' critical sets for the volume--constrained Area functional, was given by Ros in~\cite{Ros}, where it is shown that in the flat torus $\T^3$, they are {\em balls}, {\em $2$--tori}, {\em gyroids} or {\em lamellae}, as in the following figure.

\begin{figure}[H]
\centering
\includegraphics[scale=0.6]{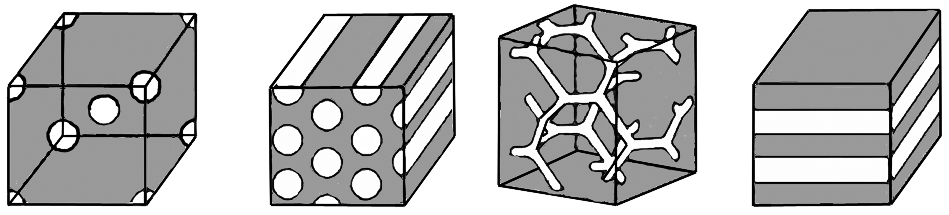}
\caption{Periodic critical set classified in~\cite{Ros}: balls, $2$--tori, gyroids and lamellae (from left to right).}
\label{figurauno}
\end{figure} Lamellae, $2$--tori and balls are actually strictly stable, while in~\cite{Gr,GrWo,Ross} the authors established the strict stability of gyroids only in some cases. 

\medskip

We are going to prove in Theorem~\ref{existence2} that if a smooth set is ``close'' enough to a strictly stable critical set for the volume constrained Area functional, then its surface diffusion flow exists for every time and smoothly converges to a ``translate'' of such set.

\section{Evolution of geometric quantities and basic estimates}

Along any surface diffusion flow $\varphi_t:M\to\T^n$ (or when the ambient is a general flat space) we have the following evolution equations (computed in detail in~\cite[Proposition~3.4]{mant5} for a general geometric flow of hypersurfaces),
\begin{equation}
\frac{\partial}{\partial t}g_{ij}=2\Delta\HHH h_{ij}\,,\quad\quad\frac{\partial}{\partial t}g^{ij}
=- 2\Delta\HHH h^{ij}\, ,\quad\quad\frac{\partial}{\partial t}\mu=\HHH \Delta\HHH\mu\label{derg}
\end{equation}
and
\begin{equation}
\frac{\partial}{\partial t}\Gamma_{jk}^i={\nabla\BBB} * \Delta\HHH+\BBB * \nabla \Delta \HHH\label{derCh}
\end{equation}
where $T * S$ (following Hamilton~\cite{hamilton1}) denotes a tensor formed by a sum of terms each one of them obtained by the product of a real constant with the contraction on some indices of the pair $T$, $S$ with the metric $g_{ij}$ and/or its inverse $g^{ij}$. Just to give an example, if $T$ is a $(0,3)$--tensor and $S$ a $(2,2)$--tensor, then one possible contribution to $T*S$ is given by 
$$
g^{pq}T_{ijp}S^{kl}_{mq}+5g^{pq}T_{pij}S^{kl}_{qm}\,.
$$
A very useful property of such $*$--product is that
$|T*S|\leqslant C|T||S|$ where the constant $C$ depends only on the ``algebraic structure'' of $T*S$, moreover, it clearly holds $\nabla (T*S)=\nabla T*S+T*\nabla S$.\\
Then, arguing as in~\cite[Proposition~2.3.1]{Manlib}, we get the following evolution equation for the mean curvature
\begin{equation}
\frac{\partial\,}{\partial t}\HHH =-\Delta\Delta\HHH -\Delta\HHH\vert\BBB\vert^2\label{derH}
\end{equation}
(notice that this equation further highlights the fourth--order nature of the flow).

We now introduce some notation which will be useful for the computations that follow (see~\cite{mant5}). If $T_1, \dots , T_l$ is a finite family of tensors (here $l$ is not an index of the tensor $T$), 
with the symbol
$$
\bbigstar_{i=1}^l T_i
$$
we will mean $T_1 * T_2 * \dots * T_l$.\\
With the symbol $\pol_{s}(\nabla^\alpha T,\nabla^\beta S,\dots,\nabla^\gamma R)$ we will denote a ``polynomial''
in the tensors $T,S,\dots,R$ and their iterated covariant derivatives with the $*$ product as 
\beq \label{pols}
\pol_{s}(\nabla^\alpha T,\nabla^\beta S,\dots,\nabla^\gamma R)\,\,=\sum_{i+j+\dots+k=s}
c_{ij\dots k}\,\nabla^{i} T * \nabla^{j}S*\dots * \nabla^{k} R
\eeq
where the $c_{ij\dots k}$ are some real constants and $i\leqslant\alpha$, $j\leqslant\beta$, ... , $k\leqslant\gamma$. Moreover, we set $\pol_0(\,\cdot\,)=0$. Notice that every tensor must be present in every additive 
term of $\pol_{s}(\nabla^\alpha T,\nabla^\beta S,\dots,\nabla^\gamma R)$ and there are no repetitions.\\
If $\alpha=2$, $\beta=1$, $\gamma=0$ and $s=2$, an example of such a polynomial is given by
\begin{equation}
\pol_2(\grad^2T,\grad S,R)=7g^{pq}(\grad_p\grad_qT_{ij})S_{kl}R_{mn}+5g^{pq} (\grad_pT_{ij})(\grad_q S_{kl})R_{mn}.
\end{equation}
We will use instead the symbol $\qol^s(\nabla^\alpha\BBB,\nabla^\beta\HHH)$ for a completely contracted ``polynomial'' (hence a function) of the iterated covariant derivatives of $\BBB$ and $\HHH$, respectively up to $\alpha$ and $\beta$ (repetitions are allowed), where in every additive term both $\BBB$ and $\HHH$ must be present and $\HHH$ without derivatives is considered as a contracted $\BBB$--factor. That is, 
\beq \label{qols}
\qol^s(\nabla^\alpha\BBB, \nabla^\beta\HHH)=
\sum\,\bbigstar_{k=1}^{p}\nabla^{i_k}\BBB\,\bbigstar_{l=1}^q\nabla^{j_l}\HHH
\eeq
with $p$, $q\geqslant1$, $i_1,\dots,i_p\leqslant\alpha$ and $1\leqslant j_1,\dots,j_q\leqslant\beta$, then the coefficient $s$ denotes the sum
\beq \label{qols2}
s=\sum_{k=1}^p (i_k+1)\,+\,\sum_{l=1}^q (j_l+1)\,.
\eeq

\medskip

{\em We advise the reader that in the following the ``polynomials'' $\pol_s$ and $\qol^s$ could
vary from a line to another in a computation, by addition of ``similar'' terms.} 

\medskip

With this notation, we have the following ``computation'' lemmas.

\begin{lem}\label{eulerkey}
For every tensor $T$ and function $f$ on $M$, we have
\begin{equation}
\frac{\partial\,}{\partial t}\nabla^sT=\nabla^s\frac{\partial T}{\partial t}+\pol_{s}(\nabla^{s-1}T,\nabla^{s}\BBB,\nabla^{s} \Delta\HHH)\quad\text{ for every $s\geqslant 1$} \label{scambioderivateT}
\end{equation}
\begin{equation}
\frac{\partial\,}{\partial t}df=d\frac{\partial f}{\partial t}\qquad\text{ and }\qquad \frac{\partial\,}{\partial t}\nabla^sf=\nabla^s\frac{\partial f}{\partial t}+\pol_{s-1}(\nabla^{s-2}(\nabla f),\nabla^{s-1}\BBB,\nabla^{s-1} \Delta\HHH)\label{scambioderivatef}
\end{equation}
for every $s\geqslant 2$.
\end{lem}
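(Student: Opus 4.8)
The plan is to prove the two commutation formulas in Lemma~\ref{eulerkey} by induction on $s$, exploiting the evolution equation \eqref{derCh} for the Christoffel symbols, which is exactly the obstruction to $\partial_t$ and $\nabla$ commuting. First I would record the base case: for a tensor $T = T^{i_1\dots i_k}_{j_1\dots j_l}$ and $s=1$, one has $(\nabla_m T)^{i_1\dots i_k}_{j_1\dots j_l} = \partial_m T^{i_1\dots i_k}_{j_1\dots j_l} + (\text{Christoffel terms})*T$, so differentiating in $t$ and using that $\partial_t$ commutes with the \emph{partial} derivative $\partial_m$ gives
\begin{equation}
\frac{\partial}{\partial t}\nabla_m T = \nabla_m\frac{\partial T}{\partial t} + \Bigl(\frac{\partial}{\partial t}\Gamma\Bigr)*T\,,
\end{equation}
and then \eqref{derCh}, namely $\partial_t\Gamma = \nabla\AAA*\Delta\HHH + \AAA*\nabla\Delta\HHH$, shows the correction is of the form $\pol_1(\nabla^0 T, \nabla^1\AAA, \nabla^1\Delta\HHH)$ — matching \eqref{scambioderivateT}. (Each $\Gamma$ appearing in $\nabla T$ is contracted with exactly one copy of $T$ and no derivatives of $T$, so the exponent $s-1=0$ on $T$ is correct, and $\nabla\AAA$, $\nabla\Delta\HHH$ each enter linearly.)

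Next I would run the inductive step. Assume \eqref{scambioderivateT} holds at level $s$; apply the $s=1$ case to the tensor $\nabla^s T$ in place of $T$:
\begin{equation}
\frac{\partial}{\partial t}\nabla^{s+1}T = \nabla\frac{\partial}{\partial t}\nabla^s T + \pol_1\bigl(\nabla^0(\nabla^s T),\nabla^1\AAA,\nabla^1\Delta\HHH\bigr)\,,
\end{equation}
then substitute the inductive hypothesis for $\partial_t\nabla^s T$ inside the first term and pull $\nabla$ through using the Leibniz rule for the $*$-product, $\nabla(A*B) = \nabla A * B + A*\nabla B$. The term $\nabla\nabla^s\partial_t T = \nabla^{s+1}\partial_t T$ is the main piece; everything else is $\nabla$ applied to $\pol_s(\nabla^{s-1}T,\nabla^s\AAA,\nabla^s\Delta\HHH)$ plus the new $\pol_1$ correction. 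One then checks these reorganize into $\pol_{s+1}(\nabla^s T,\nabla^{s+1}\AAA,\nabla^{s+1}\Delta\HHH)$ — the bookkeeping is exactly that differentiating raises each of the three "slots'' by at most one, each additive term still contains $T$ exactly once (with no repetitions), and the total homogeneity degree becomes $s+1$. For the second formula \eqref{scambioderivatef}, I would note $\partial_t df = d\,\partial_t f$ is immediate since $df$ has no metric dependence (componentwise $\partial_t\partial_i f = \partial_i\partial_t f$), and then apply the tensor formula \eqref{scambioderivateT} to the $1$-form $T = \nabla f = df$: since $\nabla^s f = \nabla^{s-1}(\nabla f)$, one gets the correction $\pol_{s-1}(\nabla^{s-2}(\nabla f),\nabla^{s-1}\AAA,\nabla^{s-1}\Delta\HHH)$, the index shift by one reflecting that $\nabla f$ already absorbed one derivative.

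The only genuinely delicate point is the degree/slot bookkeeping — verifying that after the inductive substitution and the Leibniz expansion the corrections really fit the template $\pol_{s}$ with the claimed bounds $\nabla^{s-1}T$, $\nabla^s\AAA$, $\nabla^s\Delta\HHH$ (and, for functions, the shifted versions), with every term containing $T$ exactly once and no repeated tensor factors. This is routine but must be done carefully: one has to track that the highest derivative of $T$ that can appear is $\nabla^{s-1}T$ (never $\nabla^s T$, because each $\partial_t\Gamma$ costs a factor of $\AAA$ or $\nabla\AAA$ and is contracted against $T$, and the Leibniz rule moving $\nabla$ onto $\AAA$- or $\Delta\HHH$-factors rather than onto $T$ is what keeps $T$ at order $s-1$ — more precisely, when $\nabla$ hits the $T$-factor of a term $\nabla^{s-1}T*\nabla^i\AAA*\nabla^j\Delta\HHH$ with $i+j = $ appropriate, it produces $\nabla^s T$, so one must observe that such a term is actually cancelled or absorbed, or rather that the hypothesis template for level $s+1$ does permit $\nabla^s T$; indeed at level $s+1$ the allowed $T$-order is exactly $(s+1)-1 = s$, consistent). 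I would present this verification compactly by induction, citing \eqref{derCh} and the $*$-product Leibniz rule as the only inputs, and referring to \cite{mant5} for the analogous computations in the mean curvature flow setting where the same identities appear.
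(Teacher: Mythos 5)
Your proof is correct and follows essentially the same strategy as the paper's: induction on $s$, with the base case coming from the schematic identity $\nabla T = \partial T + \Gamma * T$ and the evolution formula~\eqref{derCh} for $\partial_t\Gamma$, and the reduction of~\eqref{scambioderivatef} to~\eqref{scambioderivateT} via $T=\nabla f$. The only cosmetic difference is in how the inductive step is composed: you apply the base case to the tensor $\nabla^s T$ and then substitute the level-$s$ hypothesis inside $\nabla\,\partial_t\nabla^s T$, whereas the paper applies the level-$(s-1)$ hypothesis to $S=\nabla T$ and then unwinds $\nabla^{s-1}\partial_t\nabla T$ with the base case — the same two ingredients in the opposite order, with the same $*$-product Leibniz bookkeeping showing the correction lands in $\pol_{s}(\nabla^{s-1}T,\nabla^s\AAA,\nabla^s\Delta\HHH)$.
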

\begin{proof} We show the first equation by induction on $s\in\N$. If $s=1$, we have
\begin{align*}
\frac{\partial\,}{\partial t}\nabla T=&\,
\frac{\partial\,}{\partial t}(\partial T+T\Gamma)=\frac{\partial\,}{\partial t}\partial T+\frac{\partial\,}{\partial t}(T\Gamma)
=\partial \frac{\partial T}{\partial t}+\frac{\partial T }{\partial t}\Gamma+T\frac{\partial \Gamma}{\partial t}\\
=&\,\nabla\frac{\partial T}{\partial t}+T*{\nabla\BBB} * \Delta\HHH+T*\BBB * \nabla \Delta \HHH=\nabla\frac{\partial T}{\partial t}+\pol_1(T,{\nabla\BBB},\nabla \Delta \HHH)\,,
\end{align*}
where we computed ``schematically'', denoting with $\partial$ the standard derivative in coordinates (with commute with 
$\frac{\partial\,}{\partial t}$) and with $\Gamma$ the Christoffel symbols, moreover, we used formula~\eqref{derCh}.\\
Now, assuming that formula~\eqref{scambioderivateT} holds up to $s-1$, we apply it to the tensor $S=\nabla T$
\begin{align*}
\frac{\partial\,}{\partial t}\nabla^s T=&\,\frac{\partial\,}{\partial t}\nabla^{s-1} S=\nabla^{s-1}\frac{\partial S}{\partial t}+\pol_{s-1}(\nabla^{s-2}S,\nabla^{s-1}\BBB,\nabla^{s-1} \Delta\HHH)\\
=&\,\nabla^{s-1}\frac{\partial\,}{\partial t}\nabla T+\pol_{s}(\nabla^{s-1}T,\nabla^{s-1}\BBB,\nabla^{s-1} \Delta\HHH)\\
=&\,\nabla^{s-1}\Bigl(\nabla\frac{\partial T}{\partial t}+\pol_1(T,{\nabla\BBB},\nabla \Delta \HHH)\,\Bigr)+\pol_{s}(\nabla^{s-1}T,\nabla^{s-1}\BBB,\nabla^{s-1} \Delta\HHH)\\
=&\,\nabla^{s}\frac{\partial T}{\partial t}+\pol_s(\nabla^{s-1}T,\nabla^{s}\BBB,\nabla^{s} \Delta\HHH)
\end{align*}
by the properties of the $*$--product. Hence, formula~\eqref{scambioderivateT} is proved.\\
To get equation~\eqref{scambioderivatef}, we apply the previous formula to $T=\nabla f$ as follows
\begin{align*}
\frac{\partial\,}{\partial t}\nabla^sf=&\,\frac{\partial\,}{\partial t}\nabla^{s-1}\nabla f=\nabla^{s-1}\frac{\partial}{\partial t}\nabla f+\pol_{s-1}(\nabla^{s-2}(\nabla f),\nabla^{s-1}\BBB,\nabla^{s-1} \Delta\HHH)\\
=&\,\nabla^{s}\frac{\partial f}{\partial t}+\pol_{s-1}(\nabla^{s-2}(\nabla f),\nabla^{s-1}\BBB,\nabla^{s-1} \Delta\HHH)
\end{align*}
and we are done.
\end{proof}
\begin{remark}
In the following, we will denote by $\BBB(\grad f, \grad f)$ the second fundamental form applied to the gradient of a function $f$. In local coordinates this reads as
$$
\BBB(\grad f, \grad f)= g^{ik} g^{jl} h_{kl} \grad_i f \grad_j f\, .
$$
\end{remark}
We are now ready to compute the evolution equations and then estimate the key quantities for our flow in dimension $n=4$.

\begin{proposition}\label{computeenvar}
Let $E_t\subseteq \T^4$ be a surface diffusion flow. Then, the following equations hold
\begin{align}
\frac{d}{dt} \int_{\pa E_t} \abs{\grad \HHH}^2 \, \dmu_t =&\,- 2 \Pi_{E_t} ( \Delta \HHH) + \int_{\pa E_t} \HHH \Delta \HHH \abs{\grad \HHH}^2 \, \dmu_t\label{energyfinale1}\\
&\,-\int_{\pa E_t} 2 \BBB(\grad \HHH, \grad \HHH) \Delta \HHH \, \dmu_t\\
\frac{d}{dt} \int_{\pa E_t} \abs{\grad^2\HHH}^2 \, \dmu_t =&\,- 2 \int_{\pa E_t} |\nabla^4\HHH|^2 \, \dmu_t+\int_{\pa E_t} \qol^{10}(\BBB,\nabla^3\HHH)\,\dmu_t\label{energyfinale}\\
&\,+\int_{\pa E_t} \qol^{10}(\nabla \BBB,\nabla^4\HHH)\,\dmu_t
\end{align}
where $\Pi_{E_t}$ is the quadratic form defined in formula~\eqref{quadraticform} and
\begin{itemize}
\item every ``monomial'' of $\qol^{10}(\BBB,\nabla^3\HHH)$ has $4$ factors in $\BBB$, $\nabla\HHH$ and their covariant derivatives, the factor $\BBB$ (or $\HHH$, without derivatives) is present exactly one time and the other three factors are derivatives of $\nabla\HHH$ up to $\nabla^3\HHH$, with $\grad^{3} \HHH$ or $\grad^{2} \HHH$ present at least one time;
\item every ``monomial'' of $\qol^{10}(\nabla \BBB,\nabla^4\HHH)$ has $4$ factors in $\BBB$, $\nabla\BBB$ and covariant derivatives of $\HHH$, the factor $\BBB * \BBB$ or $\BBB * \nabla \BBB$ is present exactly one time, the other two factors are derivatives of $\nabla\HHH$ up to $\nabla^4\HHH$. The factor $\grad ^4 \HHH$ is present exactly one time, with the exception of ``monomials'' of kind $\grad^3 \HHH * \BBB^2 * \grad^3 \HHH$.
\end{itemize}
Finally, the coefficients of these ``polynomials'' are {\em algebraic}, that is, they are the result of formal manipulations, in particular, they are independent of $E_t$.
\end{proposition}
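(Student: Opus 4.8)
The plan is to differentiate the two energies directly, substituting the evolution equations~\eqref{derg}, \eqref{derH} and Lemma~\ref{eulerkey}, and then repeatedly integrating by parts to isolate the top-order dissipative term together with lower-order remainders organized by the $\qol^s$ and $\pol_s$ bookkeeping. For the first identity~\eqref{energyfinale1}, I would start from $\frac{d}{dt}\int_{\pa E_t}|\nabla\HHH|^2\,\dmu_t = \int_{\pa E_t}\bigl(\frac{\partial}{\partial t}(g^{ij}\nabla_i\HHH\,\nabla_j\HHH)\bigr)\dmu_t + \int_{\pa E_t}|\nabla\HHH|^2\,\frac{\partial}{\partial t}\dmu_t$. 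The time derivative of the measure gives the term $\int\HHH\Delta\HHH\,|\nabla\HHH|^2$ via~\eqref{derg}; the $\frac{\partial}{\partial t}g^{ij}=-2\Delta\HHH\,h^{ij}$ contribution produces precisely $-\int 2\BBB(\nabla\HHH,\nabla\HHH)\Delta\HHH$. The remaining piece is $2\int g^{ij}\nabla_i\HHH\,\frac{\partial}{\partial t}\nabla_j\HHH$, where by Lemma~\ref{eulerkey} (in the function case, $s=1$, so the $\pol_0$ remainder vanishes) $\frac{\partial}{\partial t}\nabla_j\HHH = \nabla_j\frac{\partial\HHH}{\partial t} = \nabla_j(-\Delta\Delta\HHH - \Delta\HHH|\BBB|^2)$ by~\eqref{derH}. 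Integrating by parts once, $2\int\langle\nabla\HHH,\nabla(-\Delta\Delta\HHH)\rangle = 2\int\Delta\HHH\,\Delta\Delta\HHH = -2\int|\nabla\Delta\HHH|^2$, and $2\int\langle\nabla\HHH,\nabla(-\Delta\HHH\,|\BBB|^2)\rangle = 2\int\Delta\HHH\,\Delta\HHH\,|\BBB|^2$; assembling $-2\int|\nabla\Delta\HHH|^2 + 2\int|\Delta\HHH|^2|\BBB|^2 = -2\Pi_{E_t}(\Delta\HHH)$, using that $\Delta\HHH\in\widetilde H^1(\pa E_t)$ by~\eqref{corollariodivteo}, so $\Pi_{E_t}$ is legitimately evaluated on it. This gives~\eqref{energyfinale1} exactly.

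For the second identity~\eqref{energyfinale}, the structure is the same but the interchange-of-derivatives error terms now genuinely appear. Differentiating $\int_{\pa E_t}|\nabla^2\HHH|^2\,\dmu_t$, the measure term contributes $\int\HHH\Delta\HHH\,|\nabla^2\HHH|^2$, which is of $\qol^{10}$ type (one curvature factor $\HHH$, three factors built from derivatives of $\nabla\HHH$ with $\nabla^2\HHH$ appearing twice). Each of the two $g$-inverse time derivatives hidden in $|\nabla^2\HHH|^2 = g^{ia}g^{jb}\nabla_i\nabla_j\HHH\,\nabla_a\nabla_b\HHH$ produces via $\frac{\partial}{\partial t}g^{ij}=-2\Delta\HHH\,h^{ij}$ a term $\Delta\HHH * \BBB * \nabla^2\HHH * \nabla^2\HHH$, again a $\qol^{10}(\BBB,\nabla^3\HHH)$ monomial. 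The main term $2\int g^{ia}g^{jb}\nabla_i\nabla_j\HHH\,\frac{\partial}{\partial t}\nabla_a\nabla_b\HHH$ requires Lemma~\ref{eulerkey} with $s=2$: $\frac{\partial}{\partial t}\nabla^2\HHH = \nabla^2\frac{\partial\HHH}{\partial t} + \pol_1(\nabla(\nabla\HHH),\nabla\BBB,\nabla\Delta\HHH)$, and $\frac{\partial\HHH}{\partial t} = -\Delta\Delta\HHH - \Delta\HHH|\BBB|^2$ by~\eqref{derH}. The term $2\int\langle\nabla^2\HHH,\nabla^2(-\Delta\Delta\HHH)\rangle$ is integrated by parts twice to give $-2\int|\nabla^4\HHH|^2$ plus curvature commutator terms (each commutation introduces a Riemann factor $=\BBB*\BBB$ by~\eqref{Gauss-eq}), landing in $\qol^{10}(\nabla(\BBB^2),\nabla^4\HHH)$ — note the special exceptional monomials $\nabla^3\HHH * \BBB^2 * \nabla^3\HHH$ arise exactly when the commutator is applied at the stage where only three derivatives have been moved off. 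The term $2\int\langle\nabla^2\HHH,\nabla^2(-\Delta\HHH|\BBB|^2)\rangle$, expanded by Leibniz, is a $\qol^{10}$ of the first kind, and the $\pol_1$-remainder from Lemma~\ref{eulerkey}, paired against $\nabla^2\HHH$, contracts to $\qol^{10}$ terms as well (one factor being $\nabla\BBB$, i.e. inside $\nabla(\BBB^2)$ up to a schematic $\BBB*\nabla\BBB$ identification, with two $\HHH$-derivative factors and at most one $\nabla^4\HHH$).

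The bookkeeping of which monomials fall in which $\qol^{10}$-class — verifying the total weight is $10$ in every term via~\eqref{qols2}, that exactly one factor is the "curvature" factor ($\BBB$ or $\nabla(\BBB^2)$), and tracking the exceptional $\nabla^3\HHH*\BBB^2*\nabla^3\HHH$ monomials — is the main obstacle; it is not conceptually hard but requires careful accounting of each integration by parts and each application of~\eqref{ichange}. The final claim, that all coefficients are algebraic and independent of $E_t$, is immediate since every manipulation (Leibniz rule, integration by parts via~\eqref{divteo}, the commutation formulas, and Lemma~\eqref{eulerkey}) produces only universal combinatorial constants.
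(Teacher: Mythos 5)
Your derivation of the first identity~\eqref{energyfinale1} is exactly the paper's: the measure variation gives $\int\HHH\Delta\HHH|\nabla\HHH|^2$, the $\partial_t g^{ij}$ term gives $-2\int\BBB(\nabla\HHH,\nabla\HHH)\Delta\HHH$, and a single integration by parts on $2\int\langle\nabla\HHH,\nabla\partial_t\HHH\rangle$ reassembles into $-2\Pi_{E_t}(\Delta\HHH)$. That part is fine. For~\eqref{energyfinale} the overall strategy (Lemma~\ref{eulerkey} with $s=2$, two integrations by parts to expose $-2\int|\nabla^4\HHH|^2$, commutator errors via~\eqref{Gauss-eq}) also matches the paper's, but your bookkeeping of the two sub-principal contributions is swapped, and one of them as you describe it does not land in a legal $\qol^{10}$ class.

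Concretely, for $2\int\langle\nabla^2\HHH,\nabla^2(-\Delta\HHH\,|\BBB|^2)\rangle$ you propose to expand $\nabla^2(\Delta\HHH\,|\BBB|^2)$ by Leibniz and declare the result ``a $\qol^{10}$ of the first kind.'' That fails on both counts. Leibniz produces, among others, a monomial of shape $\nabla^2\HHH*\nabla^2(\BBB^2)*\nabla^2\HHH$, which contains $\nabla^2(\BBB^2)$ and so lies in neither $\qol^{10}(\BBB,\nabla^3\HHH)$ (whose curvature factor is undifferentiated) nor $\qol^{10}(\nabla(\BBB^2),\nabla^4\HHH)$ (which allows at most $\nabla(\BBB^2)$); and the term $\nabla^2\HHH*\BBB^2*\nabla^4\HHH$ it also produces has two $\BBB$ factors and a $\nabla^4\HHH$, so it is certainly not first kind. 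The paper instead integrates by parts twice, moving both derivatives onto $\nabla^2\HHH$ to get the clean second-kind monomial $\nabla^4\HHH*\Delta\HHH*\BBB^2$. Conversely, the $\pol_1$-remainder from Lemma~\ref{eulerkey} paired with $\nabla^2\HHH$ gives four-factor contractions such as $\nabla^2\HHH*\nabla\HHH*\nabla\BBB*\Delta\HHH$ and $\nabla^2\HHH*\nabla\HHH*\BBB*\nabla\Delta\HHH$; one integration by parts moves the single derivative off $\BBB$ and leaves $\BBB$ undifferentiated together with three factors among $\nabla\HHH,\nabla^2\HHH,\nabla^3\HHH$ — this is the \emph{first} class, not the second. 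Your attempted identification of $\nabla\BBB$ ``inside $\nabla(\BBB^2)$'' does not work: there is no spare undifferentiated $\BBB$ in those monomials to pair with $\nabla\BBB$, and the factor count (four, not three) does not match $\qol^{10}(\nabla(\BBB^2),\nabla^4\HHH)$ in any case. So the plan is the paper's, but these two sub-principal terms must be treated by integration by parts rather than Leibniz, and the two classifications you assigned to them must be exchanged.
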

\begin{proof}
Taking into account the evolution equations~\eqref{derg} and~\eqref{derH}, we compute
\begin{align}
\frac{d}{dt} \int_{\pa E_t} \abs{\nabla\HHH}^2 \, \dmu_t
=&\, \int_{\pa E_t}\HHH\abs{\nabla\HHH}^2 \,\Delta\HHH\,\dmu_t
-\int_{\pa E_t} 2h^{ij}\nabla_i\HHH\nabla_j\HHH\,\Delta\HHH\, d\mu_t\\
&\,-\int_{\pa E_t} 2g^{ij}\nabla_i\HHH\nabla_j\bigl(\abs{\BBB}^2\Delta\HHH+\Delta \Delta\HHH\bigr)\, d\mu_t\\
=&\, \int_{\pa E_t}\HHH\abs{\nabla\HHH}^2 \,\Delta\HHH\,\dmu_t-\int_{\pa E_t} 2\BBB(\nabla\HHH,\nabla\HHH)\,\Delta\HHH\, d\mu_t\\
&\,+\int_{\pa E_t} 2\abs{\BBB}^2(\Delta\HHH)^2\, d\mu_t
+\int_{\pa E_t} 2\Delta\HHH\,\Delta \Delta\HHH\, d\mu_t\\
=&\, \int_{\pa E_t}\HHH\abs{\nabla\HHH}^2 \,\Delta\HHH\,\dmu_t-\int_{\pa E_t} 2\BBB(\nabla\HHH,\nabla\HHH)\,\Delta\HHH\, d\mu_t\\
&\,+\int_{\pa E_t} 2\abs{\BBB}^2(\Delta\HHH)^2\, d\mu_t
-\int_{\pa E_t} 2\vert\nabla\Delta\HHH\vert^2\, d\mu_t\,,
\end{align}
where the first term on the right comes from the area variation and the second from the evolution equation of the inverse of the metric (see formulas~\eqref{derg}). Then, we have formula~\eqref{energyfinale1}, recalling the definition of $\Pi_{E_t}$ in~\eqref{quadraticform}.

To get equation~\eqref{energyfinale} we compute analogously
\begin{align}
\frac{d}{dt} \int_{\pa E_t} \abs{\nabla^2\HHH }^2 \, \dmu_t =&\, \int_{\pa E_t} \abs{\nabla^2\HHH}^2\HHH \Delta \HHH \, \dmu_t + 2\int_{\pa E_t} g\Bigl(\nabla^2\HHH,\frac{\pa}{\pa t}\nabla^2\HHH\Bigr)\, \dmu_t\\
&\,- \int_{\pa E_t} 2h^{ij}g^{kl} \Delta \HHH \grad^2_{ik}\HHH \grad^2_{jl}\HHH \, \dmu_t\\
&\,- \int_{\pa E_t} 2h^{kl}g^{ij}\Delta \HHH \grad^2_{ik}\HHH \grad^2_{jl}\HHH \, \dmu_t\label{conto1} \, .
\end{align}
We focus on the second integral, noticing that we can collect all the other terms inside the integrals in a ``polynomial'' $\qol^{10}(\BBB,\nabla^1(\nabla\HHH))$ such that every ``monomial'' has $4$ factors in $\BBB$, $\nabla\HHH$ and their covariant derivatives (remember that we consider $\HHH$ as a contracted $\BBB$--factor, in the first term -- we will always do the same also in the following) and at least three of them are derivatives of $\nabla\HHH$.\\
By formula~\eqref{scambioderivatef} in Lemma~\ref{eulerkey} with $f=\HHH$ and $s=2$, we have
\begin{align*}
\frac{\partial\,}{\partial t}\nabla^2\HHH=&\,\nabla^2\frac{\partial}{\partial t}\HHH+\pol_{1}(\nabla^0(\nabla\HHH),\nabla^1\BBB,\nabla^1 \Delta\HHH)\\
=&\,\nabla^2\big(-\Delta\Delta\HHH -\Delta\HHH\vert\BBB\vert^2\big)+\pol_{1}(\nabla^0(\nabla\HHH),\nabla^1\BBB,\nabla^1 \Delta\HHH)
\end{align*}
hence, the second integral in formula~\eqref{conto1} is equal to
\begin{align}
\int_{\pa E_t} g\Bigl(\nabla^2\HHH\,\frac{\pa}{\pa t}\nabla^2\HHH\Bigr)\, \dmu_t
=&\,\int_{\pa E_t} g\big(\nabla^2\HHH,\nabla^2\big(-\Delta\Delta\HHH -\Delta\HHH\vert\BBB\vert^2\big) \big)\,\dmu_t\\
&\,+\int_{\pa E_t} g\big(\nabla^2\HHH,\pol_{1}(\nabla^0(\nabla\HHH),\nabla^1\BBB,\nabla^1 \Delta\HHH)\big)\,\dmu_t\\
=&\,-\int_{\pa E_t} g^{ij}g^{kl}\nabla^4_{ikjl}\HHH\,\Delta\Delta\HHH\,\dmu_t\\
&\,-\int_{\pa E_t} g^{ij}g^{kl}\nabla^4_{ikjl}\HHH\Delta\HHH\vert\BBB\vert^2\,\dmu_t\\
&\,+\int_{\pa E_t} g\big(\nabla^2\HHH,\pol_{1}(\nabla^0(\nabla\HHH),\nabla^1\BBB,\nabla^1 \Delta\HHH)\big)\,\dmu_t\,,\label{conto2}
\end{align}
where we integrated by parts twice.
\\Then, recalling the properties of $\pol_{1}(\nabla^0(\nabla\HHH),\nabla^1\BBB,\nabla^1 \Delta\HHH)$, integrating by parts in the last integral, we can ``take away'' the derivative from $\BBB$ (in the ``monomials'' containing it) and ``move'' it on the other three factors, which are derivatives of $\HHH$. Hence, such integral becomes a term of kind $\int_{\pa E_t} \qol^{10}(\BBB,\nabla^3\HHH)\,\dmu_t$, noticing that $\nabla^4\HHH$ cannot appear, as by the properties of $\pol_1(\nabla^0(\nabla\HHH),\nabla^1\BBB,\nabla^1 \Delta\HHH)\big)$ either it contains $\nabla\BBB$ or $\nabla\Delta\HHH$, but not both together in any of its ``monomials''. 
Then, recalling equations~\eqref{conto1},~\eqref{conto2} and noticing that the first, third and fourth integrals in the right hand side of equation~\eqref{conto1} is of the form 
\beq
\int_{\pa E_t} \qol^{10}(\BBB,\nabla^2\HHH)\,\dmu_t \, ,
\eeq
 we can write
\begin{align}
\frac{d}{dt} \int_{\pa E_t} \abs{\nabla^2\HHH }^2 \, \dmu_t =&-2\int_{\pa E_t} g^{ij}g^{kl}\nabla^4_{ikjl}\HHH\,\Delta\Delta\HHH\,\dmu_t
-2\int_{\pa E_t} g^{ij}g^{kl}\nabla^4_{ikjl}\HHH\Delta\HHH\vert\BBB\vert^2\,\dmu_t
\\&+\int_{\pa E_t} \qol^{10}(\BBB,\nabla^3\HHH)\,\dmu_t\label{conto3}
\end{align}
where every ``monomial'' of $\qol^{10}(\BBB,\nabla^3\HHH)$ has $4$ factors in $\BBB$, $\nabla\HHH$ and their covariant derivatives, moreover
\begin{itemize}
\item the factor $\BBB$ (or $\HHH$, without derivatives) is present exactly one time,
\item the other three factors are derivatives of $\nabla\HHH$ up to $\nabla^3\HHH$, with $\grad^{3} \HHH$ or $\grad^{2} \HHH$ present at least one time.
\end{itemize}
Now we deal with the first  term in the right hand side of equation~\eqref{conto3}. We recall that
\beq
\int_{\pa E_t} g^{ij}g^{kl}\nabla^4_{ikjl}\HHH\,\Delta\Delta\HHH\,\dmu_t=\int_{\pa E_t} g^{ij}g^{kl}g^{ms}g^{pq}\nabla_i\nabla_k\nabla_j\nabla_l\HHH\, \nabla_m\nabla_s\nabla_p\nabla_q\HHH\,\dmu_t\, .
\eeq
Then, since every interchange in a pair of subsequent covariant derivatives produces an extra ``error term'' of the form 
 $\grad^l(\Riem*\nabla^{4-l}\HHH)=\nabla^l (\BBB^2*\nabla^{4-l}\HHH)$, for $l=0,\dots, 3$, by the Gauss equations~\eqref{Gauss-eq} (while instead we can switch freely two derivatives on $\HHH$, being the Hessian symmetric), 
 we obtain 
\begin{align}
\int_{\pa E_t} g^{ij}g^{kl}\nabla^4_{ikjl}\HHH\,\Delta\Delta\HHH\,\dmu_t=&\,\int_{\pa E_t} g^{ij}g^{kl}g^{ms}g^{pq}\nabla^4_{qsjl} \HHH\, \nabla^4_{ikmp}\HHH\,\dmu_t\\
&\,+ \sum_{l=0}^{3}\int_{\pa E_t}\nabla^{2}\HHH*\nabla^l (\BBB^2*\nabla^{4-l}\HHH)\,\dmu_t\,. \\
\label{intmetrica}
\end{align}

Then, we notice that, integrating twice by parts in every integral in the sum above with $l=2,3$ and only one time when $l=1$, the right hand term in~\eqref{intmetrica} is equal to
\begin{align*}
-2\int_{\pa E_t} |\nabla^4\HHH|^2\,\dmu_t&\,+\int_{\pa E_t}\nabla^{2}\HHH*\BBB^2*\nabla^{4}\HHH\,\dmu_t+\sum_{l=2}^{3}\int_{\pa E_t}\nabla^{4}\HHH*\nabla^{l-2} (\BBB^2*\nabla^{4-l}\HHH)\,\dmu_t\\
&\,+\int_{\pa E_t}\nabla^{3}\HHH*\BBB^2*\nabla^{3}\HHH\,\dmu_t\,,
\end{align*}
hence, the last two integrals on the first line contain the factor $\nabla^{4} \HHH$ exactly one time and we can finally write 
\beq\label{conto4}
-2\int_{\pa E_t} |\nabla^4\HHH|^2\,\dmu_t+\int_{\pa E_t} \qol^{10}(\nabla \BBB,\nabla^4\HHH)\,\dmu_t +\int_{\pa E_t}\nabla^{3}\HHH*\BBB^2*\nabla^{3}\HHH\,\dmu_t\,,
\eeq
where every ``monomial'' of $\qol^{10}(\nabla\BBB,\nabla^4\HHH)$ (where we collect all the ``error terms'' in such manipulation) has $4$ factors in $\BBB$, $\nabla\BBB$ and covariant derivatives of $\HHH$, moreover
\begin{itemize}
\item the factor $\BBB^2$ or $\BBB * \nabla\BBB$ is present exactly one time,
\item the other two factors are derivatives of $\nabla\HHH$ up to $\nabla^4\HHH$,
\item the factor $\grad ^4 \HHH$ is present exactly one time, with the exception of ``monomials'' of kind $\grad^3 \HHH * \BBB^2 * \grad^3 \HHH$.
\end{itemize}
Finally, noticing that the remaining term in formula~\eqref{conto3} is of the form 
$$
\int_{\pa E_t} \qol^{10}(\nabla \BBB,\nabla^4\HHH) \, \dmu_t\, ,
$$
putting together equations~\eqref{conto3} and~\eqref{conto4}, we get the second formula of the proposition.
\end{proof}

\smallskip

{\em In all the following, we will be interested in having uniform estimates for the families of sets in $\mathfrak{C}^{1}_{M_E}(E)$, given a smooth set $E\subseteq\T^n$ and a tubular neighborhood $N_\eps$ of $\pa E$ as in formula~\eqref{tubdef}, for $M_E<\eps$. To this aim, we need that the constants in the Sobolev, Poincar\'e, Gagliardo--Nirenberg interpolation and Calder\'on--Zygmund inequalities relative to all the hypersurfaces $\pa F$ boundaries of the sets $F\in\mathfrak{C}^{1}_{M_E}(E)$, are uniform. This is proved in detail in~\cite{DDM} (for the Calder\'on--Zygmund inequalities, we actually need that $F\in\mathfrak{C}^{1}_{M_E}(E)$, with $M_E>0$ small enough), hence, from now on we will use the adjective ``uniform'' in order to underline such fact. We also highlight that in all the following we will denote with $C$ a constant which may vary from a line to another and depends only on $E$ and $M_E$.}

\smallskip

\begin{proposition}[Gagliardo--Nirenberg interpolation inequalities]\label{interptensor}
Let $E \subseteq \T^n$ be a smooth set, $j$, $m$ be integer such that $0 \leqslant j <m $ and $0<r,q \leqslant +\infty$. Then, for every $F\in\mathfrak{C}^{1}_{M_E}(E)$ and every covariant tensor $T=T_{i_1 \dots i_l}$ the following ``uniform'' interpolation inequalities hold
\begin{equation}\label{gn1}
 \Vert\nabla^j T\Vert_{L^{p}{(\pa F)}}\leqslant\,C\,\big(\Vert 
\nabla^m T\Vert_{L^{r}{(\pa F)}}+\Vert T\Vert_{L^{r}{(\pa F)}}\big)^{\theta}\Vert
 T\Vert_{L^q{(\pa F)}}^{1-\theta}\,,
\end{equation}
with the compatibility condition
\beq\label{comp1}
\frac{1}{p}=\frac{j}{n-1}+\theta \Big (\frac{1}{r}-\frac{m}{n-1}\Big )+\frac{1-\theta}{q}\,,
\eeq
for all $\theta \in[j/m,1)$ for which $p\in[1,+\infty)$ is nonnegative and where $C$ is a constant depending only on $n$, $j$, $m$, $p$, $q$, $r$ and $E$, $M_E$. Moreover, if $f:\partial F\to\R$ is a smooth function, inequality~\eqref{gn1} becomes
\beq\label{gn2}
\norma{\grad^j f}_{L^p(\partial F)} \leqslant C\,\norma{\grad^m f}_{L^r(\pa F)}^{\theta} \norma{f}_{L^q(\pa F)}^{1-\theta}
\eeq
if $j \geqslant 1$ or $j=0$ and $\fint_{\partial F}f\,d\mu=0$.
By density, these inequalities clearly extend to functions and tensors in the appropriate Sobolev spaces.
\end{proposition}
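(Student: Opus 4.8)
First I note that the inequality itself is classical on any fixed smooth compact Riemannian manifold, the constant then depending on the manifold (see~\cite{aubin0}, or~\cite{AdamsFournier} for the local Euclidean versions); the only real content here is the \emph{uniformity} of $C$ over the whole family $\mathfrak{C}^{1}_{M_E}(E)$. The subtle point is that membership in $\mathfrak{C}^{1}_{M_E}(E)$ controls $\pa F$ only in $C^{1}$, so the Christoffel symbols — and a fortiori the curvature — of $\pa F$ are \emph{not} uniformly bounded; the plan is therefore to express every estimate through geometric quantities that are stable under $C^{1}$ (indeed bi--Lipschitz) perturbations of the hypersurface, and to recover the higher--order inequalities by a purely intrinsic iteration that never reintroduces curvature. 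Since $M_E<\eps$ and $\norma{\psi_F}_{C^{1}(\pa E)}\leqslant M_E$, each $\pa F$ is the image of $\pa E$ under the diffeomorphism $\Phi_F(y)=y+\psi_F(y)\nu_E(y)$, and parametrizing $\pa F$ as this normal graph one checks that the pulled--back metric $g_F=\Phi_F^{*}(g_{\pa F})$ depends only on $\psi_F$, $\nabla\psi_F$ and the fixed geometry of $\pa E$, and satisfies $\Lambda^{-1}g_E\leqslant g_F\leqslant\Lambda g_E$ for some $\Lambda=\Lambda(E,M_E)$; in particular $\Phi_F$ is uniformly bi--Lipschitz and $\mu_{\pa F}$, $\mu_{\pa E}$ are uniformly comparable.

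First I would establish a \emph{uniform first--order Sobolev inequality for tensors}: there is $C=C(E,M_E)$ such that for every $F\in\mathfrak{C}^{1}_{M_E}(E)$, every admissible exponent pair (with $1/p^{*}=1/p-1/(n-1)$ when $p<n-1$, and the obvious modifications for $p\geqslant n-1$), and every covariant tensor $S$ on $\pa F$ one has $\norma{S}_{L^{p^{*}}(\pa F)}\leqslant C\bigl(\norma{\nabla S}_{L^{p}(\pa F)}+\norma{S}_{L^{p}(\pa F)}\bigr)$. By Kato's inequality $\abs{\nabla\abs{S}}\leqslant\abs{\nabla S}$ this reduces to the scalar case for $u=\abs{S}$, and the scalar first--order Sobolev inequality on $\pa F$ (at the endpoint $p=1$, then bootstrapped to $p>1$ by testing with powers of $u$) has constant controlled by the isoperimetric constant of $\pa F$, which is comparable to the fixed, finite isoperimetric constant of $\pa E$ because perimeters and volumes change by bounded factors under the uniformly bi--Lipschitz map $\Phi_F$. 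This is the estimate carried out in detail in~\cite{DDM}.

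From this point the higher--order inequalities follow by an iteration that is entirely intrinsic and curvature--free: applying the displayed Sobolev inequality successively to $T,\nabla T,\dots,\nabla^{m-1}T$ — using $\nabla^{k+1}T=\nabla(\nabla^{k}T)$, so that no commutators and hence no Riemann terms ever appear — yields uniform Sobolev embeddings $W^{m,r}(\pa F)\hookrightarrow W^{j,p}(\pa F)$ for all exponents meeting the compatibility condition~\eqref{comp1}, with constant depending only on $n,j,m,p,q,r,E,M_E$. Inequality~\eqref{gn1} then follows from these embeddings by the standard interpolation argument — combining them with H\"older's inequality (whose constant is $1$) and a uniform Peetre/Ehrling absorption — which is precisely why the admissible range is $\theta\in[j/m,1)$ and why the ``$+\norma{T}_{L^{r}}$'' term is needed. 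The scalar version~\eqref{gn2} is obtained in the same way, using in addition the uniform Poincar\'e inequality on $\pa F$ (again from the bi--Lipschitz comparison with $\pa E$) for the case $j=0$, $\fint_{\pa F}f\,d\mu=0$, and density extends everything to the Sobolev spaces.

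The hard part is exactly the difficulty isolated at the start: one cannot transplant the inequalities from $\pa E$ to $\pa F$, nor quote the classical results for $(\pa E,g_F)$, since both would require estimating $\nabla^{m}$ with respect to $g_F$ in terms of $g_E$--quantities, i.e.\ controlling second and higher derivatives of $\psi_F$, which $\mathfrak{C}^{1}_{M_E}(E)$ does not give. What makes the argument go through is that the \emph{only} input with genuine geometric content is the first--order Sobolev (essentially isoperimetric, hence bi--Lipschitz stable) inequality, after which all higher--order information is produced by intrinsic iteration. For the Calder\'on--Zygmund inequalities referred to in the same remark one does need $M_E$ small and a further argument, but the Gagliardo--Nirenberg inequalities stated here are covered by the above.
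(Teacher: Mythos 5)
Your high-level strategy is the same as the paper's: establish a uniform scalar Sobolev inequality on $\pa F$ (with constant controlled by $C^1$--stable quantities, i.e.\ through the bi-Lipschitz comparison with $\pa E$ carried out in~\cite{DDM}), pass to tensors via the Kato-type reduction $u=|T|$ (the paper uses the smoothing $\sqrt{|T|^2+\varepsilon^2}$ to avoid the nonsmoothness of $|T|$ at zeros, which you should do as well), and then bootstrap to higher orders. That part matches.

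There is, however, a real gap in the passage from Sobolev embeddings to the Gagliardo--Nirenberg inequality~\eqref{gn1}. Iterating the first-order Sobolev inequality on $T,\nabla T,\dots,\nabla^{m-1}T$ does not produce embeddings ``for all exponents meeting the compatibility condition~\eqref{comp1}''; it produces only the one-parameter family corresponding to $\theta=1$ in~\eqref{comp1}, namely $1/p=j/(n-1)+1/r-m/(n-1)$. To reach $\theta<1$ you need a second, genuinely different ingredient: the \emph{derivative-interpolation inequality}
\[
\norma{\nabla^{j} T}_{L^{p}(\pa F)}\;\leqslant\; C\,\norma{\nabla^{m} T}_{L^{p}(\pa F)}^{j/m}\,\norma{T}_{L^{p}(\pa F)}^{1-j/m}\,,
\]
which is proved by integration by parts (e.g.\ $\int|\nabla T|^{p}\,d\mu=-\int\langle T,\Div(|\nabla T|^{p-2}\nabla T)\rangle\,d\mu$ and then H\"older and iteration). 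This is \emph{not} a consequence of the Sobolev embeddings, nor of H\"older alone, and it is certainly not what the Peetre/Ehrling lemma gives: Ehrling yields an $\varepsilon$-absorption of an intermediate norm via compactness, without any explicit exponent $\theta$, and its constant $C_\varepsilon$ comes from a contradiction argument that is not uniform over the family $\mathfrak{C}^{1}_{M_E}(E)$, so invoking a ``uniform Peetre/Ehrling absorption'' is the wrong tool here. The good news is that your guiding intuition — that no curvature/commutator terms need appear — is in fact correct for this missing ingredient too, because the integration by parts above never commutes covariant derivatives; but the step has to be identified and carried out, not attributed to absorption. Finally, a small but real point: the ``$+\norma{T}_{L^{r}}$'' in~\eqref{gn1} is not an absorption artifact; it is forced on a closed manifold because nonzero parallel tensors (for which $\nabla^{m}T=0$) would otherwise make the inequality false. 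The paper sidesteps these issues by following Aubin's proof of~\cite[Theorem~3.70]{aubin0} step by step (whose only manifold-dependent input is the Sobolev constant) and by referring to~\cite[\S5--6]{mant5} for the tensor iteration; if you fill in the derivative-interpolation step as above, your argument lines up with theirs.
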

\begin{proof}[Proof -- Sketch]
For a single fixed regular hypersurface $\partial F$, inequality~\eqref{gn2} is given by Theorem~3.70 in~\cite{aubin0}, while inequality~\eqref{gn1} for $T$ equal to a function $f:\partial F\to\R$ can be obtained by repeating step by step the proof of such theorem, once established the standard Sobolev inequality for hypersurfaces without boundary,
\beq\label{sob}
\Vert f\Vert_{L^{p^*}{(\pa F)}}\leqslant\,C\,\big(\Vert\nabla f\Vert_{L^{p}{(\pa F)}}+\Vert f\Vert_{L^{p}{(\pa F)}}\big)\,,
\eeq
for every $p\in[1,n-1)$, where $p^*=np/(n-p)$ (an example of such argument can be found in~\cite[Section~6]{mant5}).\\
The extension of inequality~\eqref{gn1} to tensors can be obtained as in~\cite[Sections~5 and~6]{mant5}, by means of the estimate
(see~\cite{aubin0}, Proposition~2.11 and also~\cite{cantor1,cantor2}),
$$
\left\vert \,\nabla \sqrt{\vert T\vert^2 + \varepsilon^2}\,\right\vert\,=\,
\left\vert \,\frac{\langle \nabla T , T\rangle}{\sqrt{\vert T\vert^2 +
 \varepsilon^2}}\, \right\vert
\,\leqslant\,\frac{\vert T \vert}{\sqrt{\vert T\vert^2 +
 \varepsilon^2}}\,\vert\nabla T\vert\,\leqslant \,\vert\nabla T\vert
$$
clearly leading to the previous Sobolev inequality for tensors, as $\sqrt{\vert T\vert^2 + \varepsilon^2}$ converges to $\vert T \vert$ when $\varepsilon\to0$ (this argument is necessary as $\vert T \vert$ is not necessarily smooth).\\
Finally, the ``uniformity'' in the constants of the inequalities, independently of $F\in\mathfrak{C}^{1}_{M_E}(E)$, follows by the same independence in the Sobolev inequalities (by checking the proof of Theorem~3.70 in~\cite{aubin0}). This is shown and discussed in detail in~\cite{DDM}.
\end{proof}

\begin{remark}\label{poincare}
Notice that in the same hypotheses of this proposition, by inequality
\beq\label{poinc0}
\Vert f-\overline{f\,}\Vert_{L^{q^*}{(\pa F)}}\leqslant\,C\,\Vert\nabla f\Vert_{L^{q}{(\pa F)}}\,,
\eeq
we also have the following ``uniform'' Poincar\'e inequalities
\beq\label{poinc1}
\Vert f-\overline{f\,}\Vert_{L^{p}{(\pa F)}}\leqslant\,C\,\Vert\nabla f\Vert_{L^{p}{(\pa F)}}\,,
\eeq
for every $p\in[1,+\infty]$.
\end{remark}

\begin{remark}\label{rem1}
Very similar ``uniform'' interpolation inequalities are worked out in~\cite{mant5}, for any family of smooth, $n$--dimensional, regular hypersurfaces $N\subseteq\R^{n+1}$ satisfying $\vol(N) +\Vert\HHH\Vert_{L^{n+\delta}(N)}\leqslant C$, for some $\delta>0$, instead of being boundaries of sets belonging to $\mathfrak{C}^{1}_{M_E}(E)$.
\end{remark}

As a direct consequence of Proposition~\ref{interptensor}, we have the following lemma.

\begin{lem}\label{interptensor2}
Let $E \subseteq \T^n$ be a smooth set and $j,m\in\N$ with $1\leqslant j<m$. Then, for every $F\in\mathfrak{C}^{1}_{M_E}(E)$ and every covariant tensor $T$, the following uniform inequalities hold, for every $\varepsilon>0$,
\begin{equation}
\norma{\grad^j T}_{L^p(\partial F)}^2 \!\leqslant\!C \norma{\grad^m T}_{L^2(\pa F)}^{2\theta} \norma{\nabla T}_{L^2(\pa F)}^{2(1-\theta)}+ C\norma{\nabla T}_{L^2(\pa F)}^2\! \leqslant \varepsilon\norma{\grad^m T}_{L^2(\pa F)}^2\!+ C\norma{\nabla T}_{L^2(\pa F)}^2\label{gn3bis}
\end{equation}
with the compatibility condition
\beq\label{comp2}
\frac{1}{p}=\frac{j-1}{n-1}-\theta\Big(\frac{m-1}{n-1}\Big)+\frac{1}{2}\,,
\eeq
for all $\theta \in\big[\frac{j-1}{m-1},1\big)$ for which $p\in[1,+\infty)$ is nonnegative.\\
The constants $C$ depends only on $n$, $j$, $m$, $p$, $E$, $M_E$ and $\varepsilon$.
\end{lem}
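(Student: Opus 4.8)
The plan is to apply Proposition~\ref{interptensor} to the tensor $S:=\grad T$ rather than to $T$ itself. Concretely, I would take $r=q=2$ in the interpolation inequality~\eqref{gn1} and replace there the orders $(j,m)$ by $(j-1,m-1)$ and the tensor by $S$. Since $\grad^{j-1}S=\grad^{j}T$, $\grad^{m-1}S=\grad^{m}T$ and $S=\grad T$, this produces
\[
\norma{\grad^{j}T}_{L^{p}(\pa F)}\leqslant C\big(\norma{\grad^{m}T}_{L^{2}(\pa F)}+\norma{\grad T}_{L^{2}(\pa F)}\big)^{\theta}\norma{\grad T}_{L^{2}(\pa F)}^{1-\theta},
\]
for $\theta\in\big[\tfrac{j-1}{m-1},1\big)$ with $p\geqslant1$, and with compatibility relation $\tfrac1p=\tfrac{j-1}{n-1}+\theta\big(\tfrac12-\tfrac{m-1}{n-1}\big)+\tfrac{1-\theta}{2}$, which simplifies to exactly~\eqref{comp2}. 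The constant $C$ is uniform over $F\in\mathfrak{C}^{1}_{M_E}(E)$ because Proposition~\ref{interptensor} already delivers this uniformity.

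Next I would square this inequality. Using $(a+b)^{2\theta}\leqslant C(a^{2\theta}+b^{2\theta})$ together with $\norma{\grad T}_{L^{2}(\pa F)}^{2\theta}\norma{\grad T}_{L^{2}(\pa F)}^{2(1-\theta)}=\norma{\grad T}_{L^{2}(\pa F)}^{2}$, the squared inequality becomes the first estimate in~\eqref{gn3bis}. For the second estimate I would then invoke Young's inequality with conjugate exponents $1/\theta$ and $1/(1-\theta)$ (admissible since $\theta\in(0,1)$; the borderline $\theta=0$, which occurs only for $j=1$, makes the claimed bound trivial) to absorb $\norma{\grad^{m}T}_{L^{2}(\pa F)}^{2\theta}\norma{\grad T}_{L^{2}(\pa F)}^{2(1-\theta)}$ into $\varepsilon\norma{\grad^{m}T}_{L^{2}(\pa F)}^{2}+C(\varepsilon)\norma{\grad T}_{L^{2}(\pa F)}^{2}$, which yields the assertion with a constant depending additionally on $\varepsilon$.

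There is no genuine obstacle here: the statement is a mechanical corollary of Proposition~\ref{interptensor}. The only two points deserving attention are checking that the substitution $(j-1,m-1,\grad T)$ in~\eqref{comp1} really reproduces~\eqref{comp2} (done above), and observing that the lower-order term in~\eqref{gn1} becomes $\norma{S}_{L^{2}}=\norma{\grad T}_{L^{2}}$ rather than $\norma{T}_{L^{2}}$; this is precisely why the right-hand sides in~\eqref{gn3bis} feature $\norma{\grad T}_{L^{2}(\pa F)}$ and not $\norma{T}_{L^{2}(\pa F)}$, which is the form needed for the absorption steps in the energy estimates of Proposition~\ref{computeenvar}. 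Uniformity of $C$ in $F$ is inherited verbatim from Proposition~\ref{interptensor}.
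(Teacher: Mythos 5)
Your argument is exactly the paper's: apply inequality~\eqref{gn1} with $\nabla T$ in place of $T$ (equivalently, with orders $(j-1,m-1)$ and $r=q=2$), square, and absorb the cross term by Young's inequality. The paper states this in one line; your write-up just supplies the arithmetic check of the compatibility condition and the routine $\theta=0$ edge case, both of which are correct.
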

\begin{proof}
The first inequality in formula~\eqref{gn3bis} comes from inequality~\eqref{gn1}, by substituting $\nabla T$ in place of $T$, while the second one follows by Young inequality.
\end{proof}

{\em From now on, we consider as ambient space the four--dimensional flat torus $\T^4$.}

\begin{proposition}\label{estimateenvar}
Let $E_t\subseteq \T^4$ be a surface diffusion flow such that $E_t\in\mathfrak{C}^{1}_{M_E}(E)$, for some smooth set $E$. Then,
\begin{align}
\frac{d}{dt} \int_{\pa E_t} \abs{\grad \HHH}^2 \, \dmu_t \leqslant & \,- 2 \Pi_{E_t} ( \Delta \HHH) + \eps \norma{\nabla^{4}\HHH}^2_{L^2(\pa E_t)}+C_1\big(1+\norma{\grad \HHH}_{L^2(\pa E_t)}^{\tau}\big)\norma{\grad \HHH}_{L^2(\pa E_t)}^2\label{energy0}\\
\frac{d}{dt} \int_{\pa E_t} \abs{\grad^2\HHH}^2 \, \dmu_t \leqslant &\,- \norma{\nabla^4\HHH}^2_{L^2(\pa E_t)}+C_2\big(1+\norma{\grad \HHH}_{L^2(\pa E_t)}^{\tau'}\big)\norma{\grad \HHH}_{L^2(\pa E_t)}^2\label{energy1}
\end{align}
for any $\eps>0$, some exponents $\tau, \tau'>0$ and constants $C_1,C_2$ depending on $E$, $M_E$, $\eps$, $\|\BBB\|_{L^\infty(\pa E_t)}$ and $\|\nabla\BBB\|_{L^6(\pa E_t)}$.
\end{proposition}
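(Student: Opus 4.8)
The plan is to start from the two exact identities~\eqref{energyfinale1} and~\eqref{energyfinale} of Proposition~\ref{computeenvar}, keeping the ``good'' terms $-2\Pi_{E_t}(\Delta\HHH)$ and $-2\int_{\pa E_t}|\nabla^4\HHH|^2\,\dmu_t$ untouched, and to bound all the remaining terms on the right--hand sides by $\eps\|\nabla^4\HHH\|^2_{L^2(\pa E_t)}$ plus a quantity of the form $C\bigl(1+\|\nabla\HHH\|^\tau_{L^2(\pa E_t)}\bigr)\|\nabla\HHH\|^2_{L^2(\pa E_t)}$. For each such term I would proceed in three steps: \emph{(i)} extract its ``pure curvature'' factor --- which, by the structural description in Proposition~\ref{computeenvar}, is $\HHH$ or $\BBB$ without derivatives, or $\BBB^2$, or $\nabla(\BBB^2)$ --- and bound it pointwise by $C\|\BBB\|_{L^\infty(\pa E_t)}$, respectively $C\|\BBB\|^2_{L^\infty(\pa E_t)}$, respectively $C\|\BBB\|_{L^\infty(\pa E_t)}|\nabla\BBB|$; \emph{(ii)} apply H\"older's inequality to the remaining product of covariant derivatives of $\HHH$, putting the Lebesgue exponent $6$ on the leftover factor $|\nabla\BBB|$ whenever it occurs, so as to bring in $\|\nabla\BBB\|_{L^6(\pa E_t)}$; \emph{(iii)} estimate each resulting norm $\|\nabla^j\HHH\|_{L^p(\pa E_t)}$, $1\leqslant j\leqslant 4$, by interpolating between $\|\nabla^4\HHH\|_{L^2(\pa E_t)}$ and $\|\nabla\HHH\|_{L^2(\pa E_t)}$ via Lemma~\ref{interptensor2} applied to $T=\HHH$ with $m=4$ (here $n-1=3$), and conclude by Young's inequality --- the finitely many small parameters it produces being chosen with sum equal to the prescribed $\eps$ for~\eqref{energy0}, and small enough that the coefficient $-2$ in front of $\int|\nabla^4\HHH|^2$ becomes $\leqslant -1$ for~\eqref{energy1}. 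This also accounts for the dependence of the ``uniform'' constants $C_1,C_2$ on $E$, $M_E$, $\eps$, $\|\BBB\|_{L^\infty(\pa E_t)}$ and $\|\nabla\BBB\|_{L^6(\pa E_t)}$.

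Carrying this out, for~\eqref{energy0} the two error terms $\int_{\pa E_t}\HHH\,\Delta\HHH\,|\nabla\HHH|^2\,\dmu_t$ and $\int_{\pa E_t}2\,\BBB(\nabla\HHH,\nabla\HHH)\,\Delta\HHH\,\dmu_t$ are both bounded --- using $|\HHH|,|\BBB|\leqslant C\|\BBB\|_{L^\infty}$ and $|\Delta\HHH|\leqslant C|\nabla^2\HHH|$ pointwise --- by $C\|\BBB\|_{L^\infty}\,\|\nabla^2\HHH\|_{L^2}\,\|\nabla\HHH\|^2_{L^4}$; interpolating $\|\nabla^2\HHH\|^2_{L^2}\leqslant C\|\nabla^4\HHH\|^{2/3}_{L^2}\|\nabla\HHH\|^{4/3}_{L^2}+C\|\nabla\HHH\|^2_{L^2}$ and $\|\nabla\HHH\|^2_{L^4}\leqslant C\|\nabla^4\HHH\|^{1/2}_{L^2}\|\nabla\HHH\|^{3/2}_{L^2}+C\|\nabla\HHH\|^2_{L^2}$, and then applying Young's inequality to each summand of the product (the dominant one, $\|\nabla^4\HHH\|^{5/6}_{L^2}\|\nabla\HHH\|^{13/6}_{L^2}$, being split with the conjugate exponents $12/5$ and $12/7$), yields~\eqref{energy0}. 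For~\eqref{energy1} I would use the precise structure stated in Proposition~\ref{computeenvar}: every monomial of $\qol^{10}(\BBB,\nabla^3\HHH)$ is $\BBB$ times $\nabla^a\HHH*\nabla^b\HHH*\nabla^c\HHH$ with $a+b+c=6$, $1\leqslant a,b,c\leqslant 3$, so by H\"older (exponents $3,3,3$) it reduces to $\|\nabla^3\HHH\|_{L^3}\|\nabla^2\HHH\|_{L^3}\|\nabla\HHH\|_{L^3}$ or $\|\nabla^2\HHH\|^3_{L^3}$; and every monomial of $\qol^{10}(\nabla(\BBB^2),\nabla^4\HHH)$ is $\BBB^2*\nabla^2\HHH*\nabla^4\HHH$, or $\nabla(\BBB^2)*\nabla\HHH*\nabla^4\HHH$ (which, by H\"older with exponents $6,3,2$, is bounded by $C\|\BBB\|_{L^\infty}\|\nabla\BBB\|_{L^6}\|\nabla\HHH\|_{L^3}\|\nabla^4\HHH\|_{L^2}$), or the exceptional $\nabla^3\HHH*\BBB^2*\nabla^3\HHH$ (bounded by $C\|\BBB\|^2_{L^\infty}\|\nabla^3\HHH\|^2_{L^2}$); each of these is then disposed of via Lemma~\ref{interptensor2}, with admissible interpolation parameters $\theta\in\{1/6,1/3,1/2,2/3,5/6\}$, and Young's inequality, observing that in every term still carrying $\|\nabla^4\HHH\|_{L^2}$ this norm ends up raised, after all interpolations, to a power strictly smaller than $2$.

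The only real difficulty is the bookkeeping: one has to verify, for each of the finitely many monomials of the ``polynomials'' of Proposition~\ref{computeenvar}, that the interpolation exponent $\theta$ demanded by the chosen H\"older split lies in the admissible range $[(j-1)/(m-1),1)$ of Lemma~\ref{interptensor2} with $n-1=3$, and that after Young's inequality the power of $\|\nabla\HHH\|_{L^2}$ multiplying $\|\nabla\HHH\|^2_{L^2}$ is nonnegative (so that it is controlled by $1+\|\nabla\HHH\|^\tau_{L^2}$ for $\tau,\tau'$ the largest such powers). Both facts are forced by the scaling homogeneity recorded in the weight $10$ of the polynomials $\qol^{10}$ and in the cubic structure of the error terms of~\eqref{energyfinale1}, together with the structural facts --- already established in Proposition~\ref{computeenvar} --- that no monomial contains $\nabla^4\HHH$ more than once (so the room left by $-2\int|\nabla^4\HHH|^2$ and by the $\eps$ in~\eqref{energy0} is never exceeded, Young's inequality producing $\eps\|\nabla^4\HHH\|^2_{L^2}$ plus a large power of $\|\nabla\HHH\|_{L^2}$), nor a derivative of $\BBB$ in a norm stronger than $L^6$, nor $\BBB$ itself in a norm stronger than $L^\infty$. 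It is exactly these exponents, together with the value of $n-1$, that will be recomputed in the general $n$--dimensional case of the last section.
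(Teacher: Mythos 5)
Your proposal is correct and follows essentially the same line as the paper's proof: start from the identities of Proposition~\ref{computeenvar}, preserve the two good terms $-2\Pi_{E_t}(\Delta\HHH)$ and $-2\int|\nabla^4\HHH|^2$, pull out the curvature factors pointwise (in $L^\infty$ for $\BBB$, $L^6$ for $\nabla\BBB$), H\"older on the remaining derivatives of $\HHH$, interpolate via Lemma~\ref{interptensor2} with $m=4$, and close with Young. The only cosmetic difference is the choice of H\"older exponents: the paper uses the systematic choice $\beta_l=\tfrac{w}{j_l+1}$ (with $w=7$ for the first estimate and $w=9$ for $\qol^{10}(\BBB,\nabla^3\HHH)$), whereas you pick ad hoc splits such as $(L^4,L^4,L^2)$ for the cubic error in~\eqref{energyfinale1} and $(L^3,L^3,L^3)$ for the monomials of $\qol^{10}(\BBB,\nabla^3\HHH)$; both lead to admissible interpolation indices with total $\Theta<2$, which is the one fact that makes Young's inequality close the estimate, so the bookkeeping succeeds either way.
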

\begin{proof}
To get the first inequality, we start estimating the second and third terms in formula~\eqref{energyfinale1} as follows,
\beq
C\int_{\pa E_t}|\BBB| \abs{\grad \HHH}^2 \abs{\grad^2 \HHH}\, \dmu_t\leqslant
C\int_{\pa E_t}|\BBB|\,\prod_{l=1}^3|\nabla^{j_l}\HHH|\, \dmu_t
\leqslant C\norma{\BBB}_{L^{\infty}(\pa E_t)} \prod_{l=1}^3 \norma{\grad^{j_l}\HHH}_{L^{\beta_l}(\pa E_t)}\,,
\eeq
where we used H\"older inequality, with exponents $\beta_l= \frac{7}{j_l+1}>2$, noticing that since $\sum _{l=1}^3 j_l=4$, we have
\beq
\sum_{l=1}^3 \frac{1}{\beta_l} =\sum_{l=1}^3 \frac{j_l+1}{7} =1\,.
\eeq
Then, by the uniform interpolation inequalities~\eqref{gn3bis}, we get
\begin{equation}
\norma{\grad^{j_l}\HHH}_{L^{\beta_l}(\pa E_t)} \leqslant C\norma{\grad^4 \HHH}^{\theta_l}_{L^2(\pa E_t)} \norma{\grad \HHH}^{1-\theta_l}_{L^{2}(\pa E_t)} +C\norma{\grad \HHH}_{L^{2}(\pa E_t)} \, , \label{interpgrad2bis}
\end{equation}
with
\beq
\theta_l=\frac{j_l-1}{3}+\frac{1}{2}-\frac{1}{\beta_l}\in \Bigl(\frac{j_l-1}{3},1 \Bigr)\,,
\eeq
for some uniform constants $C$.
Hence,
\begin{align}
C\int_{\pa E_t}|\BBB| \abs{\grad \HHH}^2 \abs{\grad^2 \HHH}\, \dmu_t
\leqslant &\,C(\norma{\BBB}_{L^{\infty}(\pa E_t)}) \Big[ \norma{\grad^4 \HHH}^{\Theta}_{L^2(\pa E_t)} \norma{\grad \HHH}^{3-\Theta}_{L^{2}(\pa E_t)} \\
&\,+\sum_{l=1}^3\norma{\grad^4 \HHH}^{\Theta-\theta_l}_{L^{2}(\pa E_t)}\norma{\grad \HHH}^{3-\Theta+\theta_l}_{L^{2}(\pa E_t)}\\
&\,+ \sum_{l=1}^3\norma{\grad^4 \HHH}^{\theta_l}_{L^{2}(\pa E_t)}\norma{\grad \HHH}^{3-\theta_l}_{L^{2}(\pa E_t)}+\norma{\grad \HHH}^{3}_{L^{2}(\pa E_t)} \Big]
\end{align}
where
\begin{align}
\Theta=\sum_{l=1}^3 \theta_l= &\sum_{l=1}^3 \frac{j_l-1}{3} +\frac{3}{2}-1=\frac{5}{6} < 2\,,
\end{align}
as $\sum _{l=1}^3 j_l=4$.\\
Finally, being $\Theta$, $\Theta-\theta_l$ and $\theta_l<2$ for every $l=1,2,3$, by the Young inequality, we conclude
\beq
C\int_{\pa E_t}|\BBB| \abs{\grad \HHH}^2 \abs{\grad^2 \HHH}\, \dmu_t\leqslant\varepsilon\norma{\grad^4 \HHH}^2_{L^2(\pa E_t)} + C \big(1+\norma{\grad \HHH}^{\tau}_{L^{2}(\pa E_t)}\big) \norma{\grad \HHH}^{2}_{L^{2}(\pa E_t)}\, \label{interpgrad3-bis}
\eeq
for any $\varepsilon>0$, with $C=C(\varepsilon, \norma{\BBB}_{L^\infty(\pa E_t)})$ and $\tau>0$.\\
About the second one, recalling formula~\eqref{energyfinale}, we start dealing with the integral 
$$
\int_{\pa E_t} \qol^{10}(\BBB,\nabla^3\HHH)\,\dmu_t\,,
$$
which is a sum of integrals each one like
\beq
\int_{\pa E_t} \BBB\,\bbigstar_{l=1}^3\nabla^{j_l}\HHH\, \dmu_t\,,
\eeq
with $0 < j_l\leqslant3$, moreover, it must hold
$$
10=1+\sum_{l=1}^3(j_l+1)\qquad\text{ that is, }\qquad \sum_{l=1}^3 j_l=6\,,
$$
by formula~\eqref{qols2}. 
Hence, 
\beq
\int_{\pa E_t}\!\!|\qol^{10}(\BBB, \nabla^3 \HHH)| \, \dmu_t \leqslant C\sum \int_{\pa E_t}\Bigl|\BBB\,\bbigstar_{l=1}^3\nabla^{j_l}\HHH\,\Bigr|\, \dmu_t\leqslant C\sum \int_{\pa E_t}\!\!|\BBB|\,\prod_{l=1}^3|\nabla^{j_l}\HHH|\, \dmu_t.
\eeq
We can estimate each integral of the last sum by means of H\"older inequality, as follows
\beq\label{stimaqol1}
\int_{\pa E_t}|\BBB|\,\prod_{l=1}^3|\nabla^{j_l}\HHH|\, \dmu_t
\leqslant \norma{\BBB}_{L^{\infty}(\pa E_t)} \prod_{l=1}^3 \norma{\grad^{j_l}\HHH}_{L^{\beta_l}(\pa E_t)}
\eeq
where $\beta_l= \frac{9}{j_l+1}>2$, which clearly satisfy 
\beq
\sum_{l=1}^3 \frac{1}{\beta_l} =\sum_{l=1}^3 \frac{j_l+1}{9} =1\,.
\eeq
Then, we now estimate any of such products as follows: applying the uniform interpolation inequalities~\eqref{gn3bis} to $\HHH$, we get
\begin{equation}
\norma{\grad^{j_l}\HHH}_{L^{\beta_l}(\pa E_t)} \leqslant C\norma{\grad^4 \HHH}^{\theta_l}_{L^2(\pa E_t)} \norma{\grad \HHH}^{1-\theta_l}_{L^{2}(\pa E_t)}+ C\norma{\grad \HHH}_{L^{2}(\pa E_t)} \, , \label{interpgrad2}
\end{equation}
for some constants $C$ depending on $\beta_l$ and coefficients $\theta_l$ which are given by
\beq\label{thetal}
\theta_l=\frac{1}{3} \Bigl ( j_l - \frac{3}{\beta_l} + \frac{1}{2}\Bigr)\in \biggl(\frac{j_l-1}{3},1 \biggr)\,.
\eeq
Then, noticing that 
\begin{align}
\Theta=\sum_{l=1}^3 \theta_l= &\frac{1}{3} \sum_{l=1}^3 j_l-\sum_{l=1}^3 \frac{1}{\beta_l}
+\frac{1}{2}=1+\frac{1}{2}<2\, ,
\end{align}
multiplying together inequalities~\eqref{interpgrad2} and applying the Young inequality as above, we have 
\begin{align}
\int_{\pa E_t}|\BBB|\,\prod_{l=1}^3|\nabla^{j_l}\HHH|\, \dmu_t
\leqslant&\, C\norma{\BBB}_{L^{\infty}(\pa E_t)}\Big[\norma{\grad^4 \HHH}^{\Theta}_{L^{2}(\pa E_t)}\norma{\grad \HHH}^{3-\Theta}_{L^{2}(\pa E_t)}\\
&\,+\sum_{l=1}^3\norma{\grad^4 \HHH}^{\Theta-\theta_l}_{L^{2}(\pa E_t)}\norma{\grad \HHH}^{3-\Theta+\theta_l}_{L^{2}(\pa E_t)}\\
&\,+ \sum_{l=1}^3\norma{\grad^4 \HHH}^{\theta_l}_{L^{2}(\pa E_t)}\norma{\grad \HHH}^{3-\theta_l}_{L^{2}(\pa E_t)}+\norma{\grad \HHH}^{3}_{L^{2}(\pa E_t)} \Big]\\
\leqslant&\, \varepsilon\norma{\grad^4 \HHH}^2_{L^2(\pa E_t)} +C \big( 1+\norma{\grad \HHH}^{\tau'}_{L^{2}(\pa E_t)}\big)\norma{\grad \HHH}^{2}_{L^{2}(\pa E_t)}\,,  \label{interpgrad3}
\end{align}
for any $\varepsilon>0$, with $C=C(\varepsilon, \norma{\BBB}_{L^\infty(\pa E_t)})$ and $\tau'>0$.\\
Hence, we conclude (by choosing appropriately $\varepsilon>0$ for each summand in $\qol^{10}(\BBB,\nabla^3\HHH)$) that
\begin{equation}
\int_{\pa E_t} \qol^{10}(\BBB,\nabla^3\HHH)\,\dmu_t\leqslant\frac{1}{2}{\norma{\grad^4 \HHH}^2_{L^2(\pa E_t)}} + C \big( 1+\norma{\grad \HHH}^{\tau'}_{L^{2}(\pa E_t)}\big)\norma{\grad \HHH}^{2}_{L^{2}(\pa E_t)}\label{interpgrad10}
\end{equation}
with $C=C(\norma{\BBB}_{L^\infty(\pa E_t)})$ and $\tau'>0$.

\smallskip

Now we deal with the last integral in formula~\eqref{energyfinale}, that is,
\beq\label{excepmon}
\int_{\pa E_t} \qol^{10}(\nabla \BBB, \nabla^4 \HHH) \, \dmu_t=  \int_{\pa E_t}\nabla^{3}\HHH*\BBB^2*\nabla^{3}\HHH\,\dmu_t+ \sum
\int_{\pa E_t} \nabla^i(\BBB^2) * \nabla^j\HHH*\nabla^4\HHH\, \dmu_t \,,
\eeq
where $0\leqslant i\leqslant1$ and $0 < j\leqslant 3$ such that (by formula~\eqref{qols2})
$$
10=i+2+j+1+5\qquad\text{ that is, }\qquad i+j=2\,.
$$
This actually implies that in the sum we have only two types of terms,
\beq
\int_{\pa E_t} \BBB^2 * \nabla^2\HHH*\nabla^4\HHH\, \dmu_t\qquad\text{ and }\qquad \int_{\pa E_t} \BBB*\nabla\BBB * \nabla\HHH*\nabla^4\HHH\, \dmu_t\,.\label{intest}
\eeq
After ``carrying'' the modulus inside the integrals and using the properties of the $*$--product, we estimate the first integral in~\eqref{intest} by means of Young inequality and inequality~\eqref{gn3bis} (with $(j,m,p)=(2,4,2)$)  
, that is
\begin{align}
\int_{\pa E_t} |\BBB^2| |\nabla^2\HHH| |\nabla^4\HHH|\, \dmu_t & \leqslant \, \varepsilon\norma{\grad^4 \HHH}^2_{L^2(\pa E_t)} + C\norma{\grad^{2}\HHH}_{L^{2}(\pa E_t)}^2 
\\& \, \leqslant 2\varepsilon\norma{\grad^4 \HHH}^2_{L^2(\pa E_t)} + C\norma{\grad \HHH}_{L^{2}(\pa E_t)}^2 \label{interpgrad4}
\end{align}
for any $\varepsilon>0$, with $C=C(\varepsilon, \norma{\BBB}_{L^\infty(\pa E_t)})$.\\
\\Analogously, applying the H\"older inequality in the second integral in~\eqref{intest}, we have
\begin{align}
\int_{\pa E_t} |\BBB| |\nabla\BBB| |\nabla\HHH| |\nabla^4\HHH|\, \dmu_t & \leqslant\varepsilon\norma{\grad^4 \HHH}^2_{L^2(\pa E_t)} + C\int_{\pa E_t}|\nabla\BBB|^2| \nabla\HHH|^2\, \dmu_t 
\\& \leqslant \,\varepsilon\norma{\grad^4 \HHH}^2_{L^2(\pa E_t)} + C\norma{\grad\BBB}^{2}_{L^6(\pa E_t)} \norma{\grad \HHH}^{2}_{L^{3}(\pa E_t)}
\\& \leqslant \,\varepsilon\norma{\grad^4 \HHH}^2_{L^2(\pa E_t)} + C \norma{\grad \HHH}^{2}_{L^{3}(\pa E_t)}
\end{align}
for some constant $C=C(\varepsilon, \norma{\BBB}_{L^\infty(\pa E_t)},\norma{\nabla\BBB}_{L^6(\pa E_t)})$.
Using again inequality~\eqref{gn3bis} (with $(j,m,p)=(1,4,3)$) 
we obtain 
\beq
\norma{\grad \HHH}^{2}_{L^{3}(\pa E_t)} \leqslant  \varepsilon\norma{\grad^4 \HHH}^2_{L^2(\pa E_t)} + C \norma{\grad \HHH}^2_{L^{2}(\pa E_t)}
\eeq
that is, 
\beq \label{interpgrad7}
\int_{\pa E_t} |\BBB| |\nabla\BBB| |\nabla\HHH| |\nabla^4\HHH|\, \dmu_t \leqslant 2 \varepsilon\norma{\grad^4 \HHH}^2_{L^2(\pa E_t)} + C \norma{\grad \HHH}^2_{L^{2}(\pa E_t)}
\eeq
where $C=C(\varepsilon, \norma{\BBB}_{L^\infty(\pa E_t)},\norma{\nabla\BBB}_{L^6(\pa E_t)})$.
Finally, by means of inequality~\eqref{gn3bis} (with $(j,m,p)=(3,4,2)$), 
we estimate the integrals of the {\em exceptional} ``monomials'' (that is, the first term in the right hand side of~\eqref{excepmon})
\beq \label{interpgrad8}
\int_{\pa E_t}|\nabla^{3}\HHH| |\BBB|^2|\nabla^{3}\HHH|\,\dmu_t\leqslant C \norma{\grad^3 \HHH}^2_{L^2(\pa E_t)} \leqslant  \,\varepsilon\norma{\grad^4 \HHH}^2_{L^2(\pa E_t)} + C \norma{\grad \HHH}^{2}_{L^{2}(\pa E_t)} \, ,
  \eeq
$C=C(\varepsilon, \norma{\BBB}_{L^\infty(\pa E_t)})$.

By means of estimates~\eqref{interpgrad4},~\eqref{interpgrad7} and~\eqref{interpgrad8}, we conclude (by choosing appropriately $\varepsilon>0$ for each summand in $\qol^{10}(\nabla \BBB,\nabla^4\HHH$) as before), that
\begin{equation}
\int_{\pa E_t} \qol^{10}(\nabla\BBB,\nabla^4\HHH)\,\dmu_t\leqslant\frac{1}{2}\norma{\grad^4 \HHH}^2_{L^2(\pa E_t)} + C\norma{\grad \HHH}^2_{L^{2}(\pa E_t)} \label{interpgrad11}
\end{equation}
with $C=C(\norma{\BBB}_{L^\infty(\pa E_t)},\norma{\nabla\BBB}_{L^6(\pa E_t)})$.

Hence, we obtain inequality~\eqref{energy1} from inequalities~\eqref{interpgrad10} and~\eqref{interpgrad11}.
\end{proof}

\begin{remark}\label{rem2}
Recalling Remark~\ref{rem1}, in the proof of this proposition we could alternatively uniformly control the constants in the interpolation inequalities by a function of $\vol(\pa E_t) +\Vert\HHH\Vert_{L^4(\pa E_t)}$ (of $\vol(\pa E_t) +\Vert\HHH\Vert_{L^n(\pa E_t)}$ in the $n$--dimensional case), instead of using Proposition~\ref{interptensor}, as it is done in~\cite{mant5}. It follows that this proposition holds also for only immersed (not boundaries of sets) smooth hypersurfaces moving by the surface diffusion flow with equibounded volumes.
\end{remark}

\begin{lem}\label{w42conv}
Let $E\subseteq\T^4$ be a smooth set and $N_\eps$ be a tubular neighborhood of $\pa E$, as in formula~\eqref{tubdef}. For $M_E$ small enough and $\delta>0$, there exists a constant $C=C(E,M_E,\delta)$ such that if $F\in\mathfrak{C}^{1}_{M_E}(E)$ with
\begin{equation}
\pa F= \{y + \psi_F (y) \nu_E(y) \, : \, y \in \pa E\}
\end{equation}
for a smooth function $\psi_F:\partial E\to\R$ and
\beq\label{ex-de02}
\int_{\pa F} |\nabla^2\HHH|^2\,\dmu + \int_{\pa F} |\nabla \HHH|^2\,\dmu \leqslant \delta\,,
\eeq
there hold
\begin{equation}
\|\BBB\|_{L^\infty(\pa F)}+\|\nabla\BBB\|_{L^{6}(\pa F)}\leqslant C\qquad\text{ and }\qquad
\|\psi_F\|_{W^{4,2}(\pa E)}\leqslant C\,.\label{eqcar50001}
\end{equation}
Moreover, for every $1\leqslant p<6$, there exists a monotone nondecreasing function $\omega:\R^+\to\R^+$, depending only on $E$ and $M_E$, with $\lim_{\delta\to0^+}\omega(\delta)=0$ and such that if $F$ satisfies the further condition
$$
\vol(F\triangle E)\leqslant \delta\,,
$$
then $\Vert\psi_F\Vert_{W^{3,p}(\pa E)}\leqslant\omega(\delta)$.\\
As a consequence, if $E_i\subseteq \mathfrak{C}^{1}_{M_E}(E) $ is a sequence of smooth sets such that 
$$
\sup_{i\in\N}\,\int_{\pa E_i}\abs{\nabla^2\HHH}^2\, d\mu_i+\int_{\pa E_i}\abs{\nabla\HHH}^2\, \dmu_i<+\infty\,,
$$
then there exists a (non necessarily smooth) set $E'\in \mathfrak{C}^{1}_{M_E}(E)$ such that, up to a (non relabeled) subsequence, $E_i\to E'$ in $W^{3,p}$ as $i\to\infty$, for all $1\leqslant p<6$. Moreover, if
$$
\int_{\pa E_i}|\nabla^2\HHH|^2\, d\mu_i+\int_{\pa E_i}|\nabla\HHH|^2\, d\mu_i \to 0\,,
$$
as $i\to\infty$, the set $E'$ is critical for the volume--constrained Area functional, that is, its mean curvature is constant.
\end{lem}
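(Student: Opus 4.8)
The plan is to translate the geometric hypothesis into an analytic one for the function $\psi_F$ that describes $\pa F$ as a normal graph over $\pa E$, through the mean--curvature operator, and then to run an elliptic bootstrap which is kept \emph{uniform} over $\mathfrak{C}^1_{M_E}(E)$ by exploiting the smallness of $M_E$. Writing $\Phi_F(y)=y+\psi_F(y)\nu_E(y)$ for $y\in\pa E$, the mean curvature $\HHH$ of $\pa F$, read as a function on $\pa E$ via $\Phi_F$, solves a quasilinear second order equation $\mathcal{H}(\psi_F)=\HHH\circ\Phi_F$, where (up to an overall sign) $\mathcal{H}(\psi)=\Delta_{\pa E}\psi+\abs{\BBB_E}^2\psi+\HHH_E+R(\psi)$; here $\HHH_E$ and $\BBB_E$ are the fixed, smooth mean curvature and second fundamental form of $\pa E$, and $R$ gathers the nonlinear remainder and the contribution of the curvature of $\pa E$. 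The structural facts I will use are that $\mathcal{H}$ is affine in $\grad^2\psi$ with leading coefficient $g_E^{ij}+O(\abs\psi+\abs{\grad\psi})$, so that, for $M_E$ small, on $\mathfrak{C}^1_{M_E}(E)$ the operator $\mathcal{H}$ is a \emph{small perturbation of the fixed operator} $\Delta_{\pa E}+\abs{\BBB_E}^2$; in particular it is uniformly elliptic, and every elliptic estimate I invoke will be one for the Laplacian of the fixed manifold $\pa E$ (hence with constants depending only on $E$), together with the uniform inequalities of Proposition~\ref{interptensor} and Lemma~\ref{interptensor2}.

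First I would bound $\norma{\HHH}_{W^{2,2}(\pa F)}$. By the uniform Poincar\'e inequality, $\norma{\HHH-\overline{\HHH}}_{L^2(\pa F)}\leqslant C\norma{\grad\HHH}_{L^2(\pa F)}\leqslant C\sqrt\delta$, where $\overline{\HHH}$ is the average of $\HHH$ over $\pa F$. To control $\abs{\overline{\HHH}}$ I would use the equation of the previous paragraph: from it, $\abs{\Delta_{\pa E}\psi_F}\leqslant\abs{\HHH\circ\Phi_F}+C+C\,M_E\abs{\grad^2\psi_F}$ pointwise, so the $L^2$--estimate for $\Delta_{\pa E}$ and an absorption (legitimate since $M_E$ is small) give $\norma{\grad^2\psi_F}_{L^2(\pa E)}\leqslant C\norma{\HHH}_{L^2(\pa F)}+C$; on the other hand, integrating $\mathcal{H}(\psi_F)$ against the (bounded) Jacobian of $\Phi_F$, integrating by parts the term $\Delta_{\pa E}\psi_F$ and using $\norma{\grad\psi_F}_{L^\infty(\pa E)}\leqslant M_E$, one gets $\abs{\overline{\HHH}}\leqslant C+C\,M_E\norma{\grad^2\psi_F}_{L^2(\pa E)}$. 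Combining these two inequalities with $\norma{\HHH}_{L^2(\pa F)}^2\leqslant\norma{\HHH-\overline{\HHH}}_{L^2(\pa F)}^2+\overline{\HHH}^2\,\abs{\pa F}$ and absorbing once more yields $\norma{\grad^2\psi_F}_{L^2(\pa E)}\leqslant C$, $\abs{\overline{\HHH}}\leqslant C$, and hence $\norma{\HHH}_{W^{2,2}(\pa F)}\leqslant C(E,M_E,\delta)$.

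Next I would bootstrap. Since $W^{2,2}(\pa F)\hookrightarrow L^\infty(\pa F)$ in three dimensions, $\HHH\circ\Phi_F\in L^\infty(\pa E)$, so the Calder\'on--Zygmund estimates for $\Delta_{\pa E}$ (again absorbing the $O(M_E)$ top--order part of $R$) give $\norma{\psi_F}_{W^{2,p}(\pa E)}\leqslant C(p)$ for every finite $p$; this raises the regularity of $\Phi_F$ enough that $\HHH\circ\Phi_F\in W^{2,2}(\pa E)$, and then, differentiating the equation twice and estimating $\grad^2R(\psi_F)$ by the Gagliardo--Nirenberg inequalities on $\pa E$ and the already established $W^{2,p}$--bounds (the genuinely superlinear terms appearing with exponents $<1$ in $\norma{\grad^4\psi_F}_{L^2(\pa E)}$, the top--order term with an $O(M_E)$ coefficient), Young's inequality and a last absorption give $\norma{\psi_F}_{W^{4,2}(\pa E)}\leqslant C(E,M_E,\delta)$. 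The embedding $W^{4,2}(\pa E)\hookrightarrow W^{3,6}(\pa E)\cap C^{2,1/2}(\pa E)$ and the expressions of $\BBB$ and $\grad\BBB$ in terms of $\psi_F$ and its derivatives of order up to two and three then give $\norma{\BBB}_{L^\infty(\pa F)}+\norma{\grad\BBB}_{L^6(\pa F)}\leqslant C$, which is~\eqref{eqcar50001}. If moreover $\vol(F\triangle E)\leqslant\delta$, then $\norma{\psi_F}_{L^1(\pa E)}\leqslant C\,\vol(F\triangle E)\leqslant C\delta$ by the bounded--Jacobian change of variables in $N_\eps$, and interpolating this against $\norma{\psi_F}_{W^{4,2}(\pa E)}\leqslant C$ yields $\norma{\psi_F}_{W^{3,p}(\pa E)}\leqslant C\delta^{\theta}$ for some $\theta=\theta(p)>0$, so that one may take $\omega(\delta)=C\delta^{\theta}$ for $\delta$ small.

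For the final assertion, given $E_i\in\mathfrak{C}^1_{M_E}(E)$ with uniformly bounded energy, the $W^{4,2}$--bound and Rellich--Kondrachov on the compact manifold $\pa E$ give, along a subsequence and for every $p<6$, $\psi_{E_i}\to\psi$ in $W^{3,p}(\pa E)$, hence in $C^1(\pa E)$; thus $\norma{\psi}_{C^1(\pa E)}\leqslant M_E$, uniform convergence of the graphs gives $\vol(E_i\triangle E')\to0$ and $\vol(E'\triangle E)\leqslant M_E$, so that $E'\in\mathfrak{C}^1_{M_E}(E)$ (only $C^{2,1/2}$, not necessarily smooth) and $E_i\to E'$ in $W^{3,p}$ in the sense of Definition~\ref{convF}. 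If in addition the energies tend to zero, then, pulling back and using the uniform equivalence of the metrics, $\grad(\HHH\circ\Phi_{E_i})\to0$ in $L^2(\pa E)$, while $\grad(\HHH\circ\Phi_{E_i})\to\grad(\HHH\circ\Phi)$ in $L^p(\pa E)$ because $\psi_{E_i}\to\psi$ in $W^{3,p}$ (so $\grad^3\psi_{E_i}\to\grad^3\psi$ in $L^p$, the lower--order coefficients converging uniformly); hence $\grad_{\pa E'}\HHH_{\pa E'}\equiv0$, i.e.\ the mean curvature of $\pa E'$ is constant and $E'$ is critical for the volume--constrained Area functional. The main obstacle throughout is the \emph{uniformity} of these estimates over the whole class $\mathfrak{C}^1_{M_E}(E)$: this is exactly what dictates the perturbative viewpoint (work off the fixed operator $\Delta_{\pa E}+\abs{\BBB_E}^2$, never invoking estimates whose constants depend on the individual $\pa F$) and the repeated absorptions allowed by the smallness of $M_E$, and it is the content of~\cite{DDM}; the other delicate point is that the average $\overline{\HHH}$ is \emph{not} controlled by the $C^1$--closeness of $F$ to $E$ alone, and must be extracted from the mean--curvature equation together with the hypothesis.
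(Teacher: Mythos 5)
Your proposal is correct in outline and reaches the same conclusions, but it takes a genuinely different route from the paper in two places.

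For the crucial bound on the average mean curvature $\overline{\HHH}$ (the step the paper also flags as delicate), the paper covers $\partial F$ by finitely many solid cylinders $\mathcal C_k$ over $\partial E$, writes $\partial F\cap\mathcal C_k$ as a graph of $f_k$ with $\|f_k\|_{C^1}\leqslant M_E$, and exploits the divergence form $\HHH=-\Div\bigl(\nabla f_k/\sqrt{1+|\nabla f_k|^2}\bigr)$ so that $\int_{D_k}\HHH\,dx$ is controlled by a boundary term bounded by the perimeter of $D_k$, combined with the uniform estimate $\|\HHH-\overline\HHH\|_{L^\infty}\leqslant C\sqrt\delta$. You instead work globally with the quasilinear second--order equation $\mathcal H(\psi_F)=\HHH\circ\Phi_F$ read on the \emph{fixed} hypersurface $\partial E$, regard $\mathcal H$ as an $O(M_E)$--perturbation of $\Delta_{\partial E}+|\BBB_E|^2$, obtain $\|\nabla^2\psi_F\|_{L^2}\lesssim\|\HHH\|_{L^2}$ by absorption, and bound $\overline{\HHH}$ by integrating $\mathcal H(\psi_F)$ against the Jacobian and integrating $\Delta_{\partial E}\psi_F$ by parts. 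Both arguments ultimately rest on the divergence structure of mean curvature and on the smallness of $M_E$, so they are equivalent in spirit, but yours is more self--contained: after the $\overline\HHH$ bound, the paper goes through intrinsic curvature regularity — Theorem~3.1 of~\cite{DDM} ($\HHH\in L^q$ implies $\BBB\in L^q$), the uniform Calder\'on--Zygmund estimate $\|\nabla^k\BBB\|_{L^2}\lesssim 1+\|\nabla^k\HHH\|_{L^2}$, and the Langer/Breuning transfer between $\|\BBB\|_{W^{k,p}(\partial F)}$ and $\|\psi_F\|_{W^{k+2,p}(\partial E)}$ — whereas you run a single elliptic bootstrap for $\psi_F$ on $\partial E$, invoking only the fixed--coefficient Calder\'on--Zygmund theory and uniform absorptions. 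What the paper's route buys is that the intermediate curvature estimates are reusable verbatim (they are also what ultimately feeds Proposition~\ref{estimateenvar}); what your route buys is avoiding the external regularity results for $\BBB$ and making the $M_E$--smallness mechanism completely explicit, at the cost of having to track the $O(M_E)$ top--order coefficients in $R(\psi_F)$ through every differentiation of the equation.

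The second genuine difference is in establishing the modulus $\omega(\delta)$: the paper argues by contradiction, extracting a subsequence $F_i$ with $\delta_i\to 0$ and showing the $W^{3,p}$--limit must vanish; you instead interpolate $\|\psi_F\|_{L^1}\leqslant C\delta$ against the already established uniform $W^{4,2}$--bound via Gagliardo--Nirenberg to get $\|\psi_F\|_{W^{3,p}(\partial E)}\leqslant C\delta^{1-\theta(p)}$ directly, which is cleaner and gives an explicit $\omega$. The compactness step and the criticality of the limit $E'$ are handled the same way in both proofs.
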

\begin{proof}
Let $F\in\mathfrak{C}^{1}_{M_E}(E)$ with an associate function $\psi_F:\pa E\to\R$ as in the statement. We start by observing that, by the uniform Sobolev inequality~\eqref{sob} (extended to tensors, as for the other inequalities in Proposition~\ref{interptensor}) applied to $\nabla\HHH$, we have
\beq\label{sob2}
\Vert \nabla\HHH\Vert_{L^{6}{(\pa F)}}\leqslant\,C\,\big(\Vert\nabla^2\HHH\Vert_{L^{2}{(\pa F)}}+\Vert\nabla\HHH\Vert_{L^{2}{(\pa F)}}\big)\leqslant C\sqrt{\delta}
\eeq 
then, by the Sobolev embedding (when $p$ is larger than the dimension $n-1=3$), we get
\beq\label{sob3}
\Vert \HHH\Vert_{L^{\infty}{(\pa F)}}\leqslant\,C\,\big(\Vert\nabla\HHH\Vert_{L^{6}{(\pa F)}}+\Vert\HHH\Vert_{L^{2}{(\pa F)}}\big)\leqslant C\sqrt{\delta}+C\Vert\HHH\Vert_{L^{2}{(\pa F)}}
\eeq
\beq\label{sob4}
\Vert \HHH-\overline{\HHH}\Vert_{L^{\infty}{(\pa F)}}\leqslant\,C\,\Vert\nabla\HHH\Vert_{L^{6}{(\pa F)}}\leqslant C\sqrt{\delta}
\eeq 
where $\overline{\HHH}=\fint_{\partial F}\HHH\,d\mu$ and all the constants depends only on $E$ and $M_E$.\\
By the uniform $C^{1}$--bounds on $\pa F$, we may find a finite family (only depending on $E$ and $M_E$) of ``solid'' cylinders of the form $\mathcal C_k = D_k+\nu_E(x_k)\R$, with $D_k\subseteq T_{x_k}E$ a closed disk of fixed radius $R>0$ centered at the origin, for a finite family of points $x_k\in E$, such that $\partial F\cap \mathcal C_k$ is the graph on $D_k$ of a smooth function $f_k:D_k\to\R$, with
\begin{equation}\label{normaC1alpha}
\|f_k\|_{C^{1}(D_k)}\leqslant M_E
\end{equation}
for every $k$ and $\pa F=\bigcup \partial F\cap \mathcal C_k$.\\
Since we want to estimate $\int_{\pa F\cap\mathcal C_k}\HHH\,d\mu$, which is a ``geometric'' quantity, we can assume (by means of an isometry) that $T_{x_k}E=\langle e_1,e_2,e_3\rangle$, hence $\nu_E(x_k)=e_4$, in the canonical orthonormal basis of $\R^4$ and 
$$
\pa F\cap\mathcal C_k=\{(x,f_k(x))\, : \, x\in D_k\}\,.
$$
Then, by formulas in Remark~\ref{graph} we have 
$$
\HHH=-\Div\biggl(\frac{\nabla f_k}{\sqrt{1+\vert\nabla f_k\vert^2}}\biggr)\, ,
$$
hence, 
\begin{align}
\int_{D_k}\HHH\, dx=&\,-\int_{D_k}\Div\bigg (\frac{\nabla f_k}{\sqrt{1+|\nabla f_k|^2}}\bigg )\, dx=-\int_{\pa D_k}\biggl\langle\frac{\nabla f_k}{\sqrt{1+|\nabla f_k|^2}}\,\bigg\vert\,\frac{x}{|x|}\bigg\rangle\, d\sigma\\
=&\,\int_{\pa D_k}\bigg\langle\nu_F\,\bigg\vert\,\frac{x}{|x|}\bigg\rangle\, d\sigma\label{intHn}
\end{align}
where $\sigma$ is the canonical (standard) $2$--dimensional measure on the sphere $\pa D_k$. Thus, being the last term at most equal to the area of the sphere $\pa D_k$, we get
\begin{equation}
\overline{\HHH}\,\vol(D_k)=\int_{D_k}\!\!\!(\overline{\HHH}-\HHH)\, dx+\int_{D_k}\HHH\, dx\leqslant\int_{D_k}\!\!\!|\HHH-\overline{\HHH}|\, dx+ C\leqslant C\int_{\partial F\cap\mathcal C_k}\!\!\!\!|\HHH-\overline{\HHH}|\, dx+ C
\end{equation}
where in the last inequality we kept into account estimate~\eqref{normaC1alpha} in changing the domain (and variables) of integration. Hence, controlling the last term of this inequality by estimate~\eqref{sob4}, it follows that $\overline{\HHH}$ is bounded by a constant depending on $E$, $M_E$, $\delta$ and the same then holds also for $\HHH$. In particular, recalling that the volume of $\partial F$ is uniformly bounded (as $F\in\mathfrak{C}^{1}_{M_E}(E)$), we have that $\HHH\in L^q(\pa F)$ for every $q\in[1,+\infty)$. Then, choosing $M_E$ small enough, Theorem~3.1 in~\cite{DDM} holds, saying that we have an analogous uniform estimate on $\BBB$ in $L^q(\pa F)$, for every $q\in[1,+\infty)$.\\
Once we have a control on $\|\BBB\|_{L^q(\pa F)}$, for some exponent $q$ larger than the dimension of the hypersurfaces, again if $M_E$ is small enough, we have the following uniform higher order Calder\'on--Zygmund--type inequalities discussed in~\cite[Section~3.1]{DDM}, holding in any dimension,
\beq\label{HkCZ}
\|\nabla^k\BBB\|_{L^2(\pa F)}\leqslant C_k\big(1+\|\nabla^k\HHH\|_{L^{2}(\pa F)}\big)
\eeq
for every $k\in\N$, where the constants $C_k$ depend on $E$, $M_E$ and $\|\BBB\|_{L^q(\pa F)}$ (and the dimension), hence in our situation they depend on $E$ and $M_E$.\\
It then follows
\beq\label{eqcar43}
\|\nabla^2\BBB\|_{L^{2}(\pa F)}\leqslant C(E,M_E, \delta)
\eeq
and, by the uniform Sobolev embeddings in dimension $3$, we conclude
\beq\label{eqcar44}
\|\BBB\|_{L^q(\pa F)}+\|\nabla\BBB\|_{L^{6}(\pa F)}\leqslant C(E,M_E,\delta)
\eeq
for every $q\in[1,+\infty)$.

These geometric estimates on $\BBB$ and their derivatives, can be ``transferred'' to estimates on the function $\psi_F:\partial E\to\R$, by means of the technique of localization--representation for any ``graphical'' hypersurface on $\pa E$ introduced by Langer in~\cite{langer2} for surfaces, generalized to any dimension by Delladio~\cite{della1} and fully developed in details by Breuning in the papers~\cite{breun1,breun2,breun3} (such technique is similar to the one we used to estimate $\overline\HHH$ above). In particular, by the results in~\cite{breun3}, under a uniform control on $\|\BBB\|_{L^q(\pa F)}$ with $q$ larger than the dimension of the hypersurface, we have that an estimate on $\|\BBB\|_{W^{k,p}(\pa F)}$ implies a uniform estimate on $\|\psi_F\|_{W^{k+2,p}(\pa F)}$ and viceversa, for every set $F\in\mathfrak{C}^{1}_{M_E}(E)$. Hence, by the previous estimates~\eqref{eqcar43} and~\eqref{eqcar44} on $\BBB$ and its derivatives, we conclude
$$
\|\psi_F\|_{W^{4,2}(\partial E)}\leqslant C(E,M_E,\delta)\,.
$$
Then, we notice that, by uniform Sobolev embeddings, we have 
$$
\|\nabla^2\psi_F\|_{L^\infty(\partial E)}\leqslant C(E,M_E,\delta)
$$
which in turn implies $\|\BBB\|_{L^\infty(\partial F)}\leqslant C(E,M_E,\delta)$, by what we said above.\\
Now, in the hypotheses of the lemma on a sequence of sets $E_i$, writing
$$
\pa E_i =\{y+\psi_i(y)\nu_E(y):\, y\in \pa E\}\,,
$$
by the previous estimates and the uniform Sobolev compact embeddings 
$$
W^{4,2}(\pa E)\hookrightarrow W^{3,p}(\pa E)\hookrightarrow C^{1}(\pa E)
$$
for all $1\leqslant p<6$, up to a (not relabeled) subsequence there exists a set $E'\in \mathfrak{C}^{1}_{M_E}(E)$ such that $\psi_i\to \psi_{E'}$ in $W^{3,p}(\pa E)$ (and in $C^{1}(\pa E)$) where
$$
\pa E' =\{y+\psi_{E'}(y)\nu_E(y):\, y\in \pa E\}\,,
$$
for all $1\leqslant p<6$.\\
If actually
$$
\int_{\pa E_i}|\nabla^2\HHH|^2\, d\mu_i+\int_{\pa E_i}|\nabla\HHH|^2\, d\mu_i \to 0\,,
$$
clearly for the limit set $E'$ the mean curvature must be constant.

The fact that $\|\psi_F\|_{W^{3,p}(\partial E)}$ goes uniformly to zero as $\delta\to 0$, hence we have a function $\omega$ as in the statement, follows by the fact that, assuming $F_i\in\mathfrak{C}^{1}_{M_E}(E)$ and
\beq
\vol(F_i\triangle E)\leqslant \delta_i\,,\qquad\qquad\int_{\pa F_i} |\nabla^2\HHH|^2\,\dmu_i + \int_{\pa F_i} |\nabla \HHH|^2\,\dmu_i \leqslant \delta_i
\eeq
with $\delta_i\to0$, as $i\to\infty$, by the previous argument we have that $\psi_{F_i}:\partial E\to\R$ converges to some $\psi:\pa E\to\R$ in $W^{3,p}(\partial E)$, hence in $L^1(\pa E)$, while the limit $\vol(F_i\triangle E)\to0$ implies that $\|\psi_{F_i}\|_{L^1(\pa E)}\to 0$, then we conclude that $\psi$ must be zero and we have the thesis.
\end{proof}

\section{Global existence and stability}

\begin{thm}\label{existence2}
Let $E\subseteq\T^4$ be a strictly stable critical set for the Area functional under a volume constraint and let $N_\eps$ be a tubular neighborhood of $\pa E$, as in formula~\eqref{tubdef}. For $M_E<\eps/2$ small enough, there exists $\delta>0$ such that, if $E_0$ is a smooth set in $\mathfrak C^{1}_{M_E}(E)$ satisfying $\vol(E_0)= \vol(E)$ and
\beq\label{eqcar777}
\vol(E_0\triangle E)\leqslant\delta\qquad\text{ and }\qquad\int_{\pa E_0} \vert \nabla^2\HHH_0\vert^2\, \dmu_0 + \int_{\pa E_0} \vert \nabla \HHH_0\vert^2\, \dmu_0 \leqslant \delta\, ,
\eeq
then, the unique smooth surface diffusion flow $E_t$ starting from $E_0$, given by Proposition~\ref{th:EMS1}, is defined for all $t\geqslant0$. Moreover, $E_t$ converges smoothly to $E'=E+\eta$ exponentially fast as $t\to +\infty$, for some $\eta\in \R^4$, with the meaning that the sequence of smooth functions $\psi_t : \pa E \to \R$ representing $\pa E_t$ as ``normal graphs'' on $\pa E$, that is,
$$
\pa E_t= \{ y+ \psi_{t} (y) \nu_{E}(y) \, : \, y \in \pa E\},
$$
satisfy, for every $k\in\N$,
$$
\Vert \psi_{t}-\psi\Vert_{C^k(\pa E)}\leqslant C_ke^{-\beta_k t}\, ,
$$
for every $t \in [0, +\infty)$, for some positive constants $C_k$ and $\beta_k$, where $\psi: \pa E\to \R$ represents $\pa E'=\pa E+\eta$ as a ``normal graph'' on $\pa E$.
\end{thm}

\begin{remark}\label{existence2++}
The request that $E_0$ belongs to $\mathfrak{C}^{1}_{M_E}(E)$ with $M_E$ small enough, is necessary only in order to be able to represent its boundary as a graph of a function with bounded gradient on $\pa E$ and to have uniform Sobolev, interpolation and Calder\'on--Zygmund inequalities, as proved in~\cite[Section~3]{DDM}, while the first condition~\eqref{eqcar777} is a ``closedness'' assumption in $L^1$ for $E_0$ and $E$ (that is, on $\psi_0$). The second ``small energy'' condition~\eqref{eqcar777} in the theorem implies (see the last part of Lemma~\ref{w42conv} and its proof) that the mean curvature of $\pa E_0$ is ``close'' to be constant, as it is for the strictly stable set $E$ (actually for any critical set). Notice that this latter is a fourth order condition for the boundary of $E_0$ and that all these assumptions are clearly implied by an appropriate $W^{4,2}$--closedness of $E_0$ to $E$, arguing as in Lemma~\ref{w42conv}.
\end{remark}

\begin{remark} In the whole paper, with a little abuse of notation, the ``translations'' in $\mathbb T^4$ (respectively, $\mathbb T^n$) are identified with vectors of $\mathbb R^4$ (respectively, $\mathbb R^n$).
\end{remark}

\begin{proof}[Proof of Theorem~\ref{existence2}]
By choosing $M_E$ small enough, we assume that for every set $F\in\mathfrak{C}^{1}_{2M_E}(E)$, all the constants in the inequalities we are going to consider for functions on $\partial F$ are uniform, depending on $E$ and $M_E$, by~\cite{DDM}.\\
After choosing some small $\delta_0>0$, we consider the surface diffusion flow $E_t$ starting from $E_0\in\mathfrak{C}^{1}_{M_E}(E)$ satisfying
\begin{equation}\label{initial}
\vol(E_0\triangle E)\leqslant\delta\qquad\text{ and }\qquad\int_{\pa E_0} |\grad^2\HHH_0|^2\, d\mu_0+ \int_{\pa E_0} |\nabla \HHH_0|^2\, d\mu_0\leqslant \delta\,,
\end{equation}
for $\delta<\delta_0/2$ and we let $T(E_0)\in (0,+\infty]$ be the maximal time such that the flow is defined for $t$ in the interval $[0,T(E_0))$, $E_t\in \mathfrak{C}^{1}_{2M_E}(E)$,
\begin{equation}\label{cond3}
\vol(E_t\triangle E)\leqslant\delta_0\qquad\text{ and }\qquad\mathcal F(t) =\int_{\pa E_t} |\grad^2\HHH|^2\, d\mu_t+ \int_{\pa E_t} |\nabla \HHH|^2\, d\mu_t\leqslant\delta_0\,.
\end{equation}
All the moving boundaries $\pa E_t$ can be represented as normal graphs on $\pa E$ as 
$$
\pa E_t= \big\{ y+ \psi_t(y) \nu_{E}(y) \, : \, y \in \pa E\big\}
$$
for some smooth functions $\psi_t:\pa E\to \R$. Moreover, if $T(E_0)<+\infty$, then at least one of the three following conditions must hold:
\begin{itemize}
\item $\limsup_{t\to T(E_0)}\|\psi_t\|_{C^{1}(\pa E)}=2M_E$
\item $\limsup_{t\to T(E_0)}{\mathcal F}(t)=\delta_0$
\item $\limsup_{t\to T(E_0)}\vol(E_t\triangle E)=\delta_0$
\end{itemize}
otherwise, restarting the flow from a time $\overline{t}$ close enough to $T(E_0)$ by means of Proposition~\ref{th:EMS1}, we have the contradiction that $T(E_0)$ cannot be the maximal time defined above. Indeed, the time interval of smooth existence of the flow given by such proposition is bounded below by a constant depending on the $C^{2,\alpha}$--norm of $\psi_{\overline{t}}$ and this latter by a constant depending on $\delta_0$, by the first point of Lemma~\ref{w42conv} and Sobolev (uniform) embeddings.\\
We are going to show that if $\delta_0$ was chosen small enough, there exists $\delta>0$ such that none of these conditions can occur, hence $T(E_0)=+\infty$, that is, the surface diffusion flow of $E_0$ exists for all time.

Let us define, for $K>2$, the following ``energy'' function
\beq\label{E(t)}
\E(t) = \int_{\pa E_t} |\nabla^2\HHH|^2\, \dmu_t+ K \int_{\pa E_t} |\nabla \HHH|^2\, \dmu_t\geqslant\mathcal F(t)
\eeq
(notice that also holds $\E(t)\leqslant K\mathcal F(t)$). From Lemma~\ref{w42conv} we easily have
\begin{equation}
\|\BBB\|_{L^\infty(\pa E_t)}+\|\grad \BBB\|_{L^6(\pa E_t)}\leqslant S(\mathcal{F}(t))\leqslant S(\mathcal{E}(t)) \,,\label{eqcar50005}
\end{equation}
for $t\in [0, T(E_0))$, where the function $S:[0,+\infty)\to\R^+$ is continuous and monotone nondecreasing and it is determined by $E$ and $M_E$.

\smallskip

We now split the rest of the proof into steps. Our first goal will be showing that the function ${\mathcal{E}}$ decreases in time if $\delta$ is small enough, for an appropriate constant $K$.

\medskip

\noindent \textbf{Step ${\mathbf 1}$} (\textit{Monotonicity of ${\mathcal E}$})\textbf{.}\\
By Proposition~\ref{estimateenvar}, for any $t\in[0,T(E_0))$, we have
\begin{align}
\frac{d}{dt} \mathcal{E}(t)\leqslant & \,- 2K \Pi_{E_t} ( \Delta \HHH) + \eps K \norma{\nabla^4\HHH}^2_{L^2(\pa E_t)}+S_1(\mathcal{E}(t))K \big(1+\norma{\grad \HHH}^{\tau}_{L^{2}(\pa E_t)}\big) \norma{\grad \HHH}^{2}_{L^{2}(\pa E_t)} \\
&- \norma{\nabla^4\HHH}^2_{L^2(\pa E_t)}+S_2(\mathcal{E}(t))\big(1+\norma{\grad \HHH}_{L^2(\pa E_t)}^{\tau'}\big)\norma{\grad \HHH}_{L^2(\pa E_t)}^2\, , \label{monotonicity1}
\end{align}
for some exponents $\tau,\tau'>0$ and continuous, monotone nondecreasing functions $S_1, S_2:[0,+\infty)\to\R^+$ (as the function $S$ above), depending on $E$, $M_E$, by inequality~\eqref{eqcar50005}.\\
Choosing $\eps= 1/2K$ in inequality~\eqref{monotonicity1}, we obtain
\begin{align}
\frac{d}{dt} \mathcal{E}(t)\leqslant & \,- 2K \Pi_{E_t} ( \Delta \HHH) + \frac{1}{2} \norma{\grad^4 \HHH}^2_{L^2(\pa E_t)} +  S_1(\mathcal{E}(t)) K \big(1+\norma{\grad \HHH}^{\tau}_{L^{2}(\pa E_t)}\big) \norma{\grad \HHH}^{2}_{L^{2}(\pa E_t)} \\
&\,- \norma{\nabla^4\HHH}^2_{L^2(\pa E_t)}+S_2(\mathcal{E}(t))\big(1+\norma{\grad \HHH}_{L^2(\pa E_t)}^{\tau'}\big)\norma{\grad \HHH}_{L^2(\pa E_t)}^2\\
\leqslant & \,- 2K \Pi_{E_t} ( \Delta \HHH) - \frac{1}{2}\norma{\nabla^4\HHH}^2_{L^2(\pa E_t)}+\big(S_1(\mathcal{E}(t))K+S_2(\mathcal{E}(t))\big)\norma{\grad \HHH}_{L^2(\pa E_t)}^2\\
& \,+S_1(\mathcal{E}(t))K\norma{\grad \HHH}_{L^2(\pa E_t)}^{2+\tau}+S_2(\mathcal{E}(t))\norma{\grad \HHH}^{2+\tau'}_{L^{2}(\pa E_t)}\label{conto1000}
\end{align}
with $\tau,\tau'>0$.
Then, by the ``uniform'' Poincar\'e inequality~\eqref{poinc1} with $f= \HHH$, that is
\beq
\Vert\HHH-\overline\HHH\Vert_{L^2(\partial E_t)}\leqslant C\Vert\nabla\HHH\Vert_{L^2(\partial E_t)} \label{Poincareunif}
\eeq
and by interpolation, we have 
$$
\Vert\nabla^2\HHH\Vert_{L^2(\partial E_t)}^2\leqslant C\Vert\nabla^4\HHH\Vert_{L^2(\partial E_t)}\Vert\HHH-\overline\HHH\Vert_{L^2(\partial E_t)}\leqslant C\Vert\nabla^4\HHH\Vert_{L^2(\partial E_t)}\Vert\nabla\HHH\Vert_{L^2(\partial E_t)}\,.
$$
Hence, using Young inequality again, we get
$$
\Vert\nabla^2\HHH\Vert_{L^2(\partial E_t)}^2\leqslant\frac{1}{2}\Vert\nabla^4\HHH\Vert_{L^2(\partial E_t)}^2+C\Vert\nabla\HHH\Vert_{L^2(\partial E_t)}^2\,,
$$
that is,
$$
-\frac{1}{2}\Vert\nabla^4\HHH\Vert_{L^2(\partial E_t)}^2\leqslant-\Vert\nabla^2\HHH\Vert_{L^2(\partial E_t)}^2+C\Vert\nabla\HHH\Vert_{L^2(\partial E_t)}^2\,.
$$
Substituting into inequality~\eqref{conto1000}, we conclude
\begin{align}
\frac{d}{dt} \mathcal{E}(t)
\leqslant & \,- 2K \Pi_{E_t} ( \Delta \HHH) - \norma{\nabla^2\HHH}^2_{L^2(\pa E_t)}+\big(S_1(\mathcal{E}(t))K+S_2(\mathcal{E}(t))+C\big)\norma{\grad \HHH}_{L^2(\pa E_t)}^2\\
& \,+S_1(\mathcal{E}(t))K\norma{\grad \HHH}_{L^2(\pa E_t)}^{2+\tau}+S_2(\mathcal{E}(t))\norma{\grad \HHH}^{2+\tau'}_{L^{2}(\pa E_t)}\\
\leqslant&\, - 2K \Pi_{E_t} ( \Delta \HHH) - \norma{\nabla^2\HHH}^2_{L^2(\pa E_t)}+ S(\mathcal{E}(t))(K+1)\norma{\grad \HHH}^{2}_{L^{2}(\pa E_t)}\\
&\,+S(\mathcal{E}(t))K\norma{\grad \HHH}_{L^2(\pa E_t)}^{2+\tau}+ S(\mathcal{E}(t))\norma{\grad \HHH}^{2+\tau'}_{L^{2}(\pa E_t)}\,,\label{conto1001}
\end{align}
with $S=\max\{S_1,S_2+C\}:[0,+\infty)\to\R^+$ continuous, monotone nondecreasing and depending on $E$ and $M_E$.\\
If we now assume that, for every $t\in[0,T(E_0))$, 
\beq
\Pi_{E_t} ( \Delta \HHH)\geqslant \sigma\norma{\grad \HHH}_{L^2(\pa E_t)}^2 \label{claim}
\eeq
for some constant $\sigma>0$, then there holds (recalling that $K>2$)
\begin{align}
\frac{d}{dt} \mathcal{E}(t)\leqslant&\, - [2K\sigma - S(\mathcal E(t)) (K+1)-2]\norma{\grad \HHH}^2_{L^{2}(\pa E_t)} - 2\norma{\nabla\HHH}^2_{L^2(\pa E_t)}\\
&\,- \norma{\nabla^2\HHH}^2_{L^2(\pa E_t)}+ S(\mathcal E(t))\norma{\grad \HHH}^{2+\tau'}_{L^{2}(\pa E_t)}+S(\mathcal E(t))K\norma{\grad \HHH}_{L^2(\pa E_t)}^{2+\tau}\\
\leqslant&\, - [2K\sigma - S(\mathcal E(t))(K+1)-2]\norma{\grad \HHH}^2_{L^{2}(\pa E_t)} 
\\&\,- 2\big(\norma{\nabla\HHH}^2_{L^2(\pa E_t)}\!+\! \norma{\nabla^2\HHH}^2_{L^2(\pa E_t)}/K\big)\\
&\,+ S(\mathcal E(t))\norma{\grad \HHH}^{2+\tau'}_{L^{2}(\pa E_t)}+S(\mathcal E(t))K\norma{\grad \HHH}_{L^2(\pa E_t)}^{2+\tau}\\
\leq &\, - P(\mathcal E(t))\norma{\grad \HHH}^2_{L^{2}(\pa E_t)}-2 \mathcal E(t)/K\\
&\,+ S(\mathcal E(t))\mathcal E(t)^{1+{\tau'}/2}+S(\mathcal E(t))K\mathcal E(t)^{1+\tau/2}
\end{align}
where $P=[2K\sigma- S(K+1)-2]:[0,+\infty)\to\R$ is continuous and monotone nonincreasing, determined by $E$ and $M_E$ and $\tau,\tau'>0$.\\
It is then an exercise of qualitative analysis of ordinary differential inequalities, to conclude that if $P(0)$ is positive and $\mathcal E(0)$ is small enough such that 
$$
-\mathcal E(0)/K+ S(\mathcal E(0))\mathcal E(0)^{1+\tau'/2}+S(\mathcal E(0))K\mathcal E(0)^{1+\tau/2}<0\,,
$$
which can be always achieved, once $K$ is fixed, the first term starts and stays negative and the ``energy'' $\mathcal E$ satisfies
\begin{equation}\label{stimadec}
\frac{d}{dt} \mathcal{E}(t)\leqslant-\mathcal{E}(t)/K
\end{equation}
for every $t\in[0,T(E_0))$, that is, the function $\mathcal{E}$ is never increasing, so it remains bounded by $\mathcal E(0)$ (moreover, it decreases exponentially and converges to zero, as $t\to+\infty$, if the flow is ``eternal''). Thus, after choosing a suitably large constant $K$, by the definition of the function $S$, it is easy to see that $P(0)>0$, if $\mathcal E(0)$ is small enough. Hence, if $\delta>0$ is small enough, since $\E(0)\leqslant K\mathcal F(0)\leqslant\delta K$, we have the monotonicity of $\mathcal E$.

\medskip

\noindent \textbf{Step ${\mathbf 2}$} (\textit{Proof of estimate~\eqref{claim}})\textbf{.}\\
We need the following key lemma which is implied by Proposition~2.35 in~\cite{DDMsurvey}, that actually simply generalizes to any dimension Lemma~2.6 in~\cite{AcFuMoJu}.

\begin{lem}\label{stimaPi}
Let $E \subseteq \T^4$ be a strictly stable critical set. For every $\theta\in (0,1]$ there exist a constant $\sigma_\theta>0$ such that, if $F \in \mathfrak{C}^{1}_{2M_E}(E)$ satisfies
\beq\label{stima000}
\vol(F\triangle E)\leqslant\delta_0\qquad\text{ and }\qquad\int_{\pa F} |\nabla \HHH|^2\,\dmu\leqslant\delta_0\,,
\eeq
for $\delta_0>0$ small enough, there holds
\begin{equation}\label{2.13}
\Pi_F(\varphi) \geqslant \sigma_\theta\norma{\varphi}^2_{L^2(\pa F)},
\end{equation}
for all $\varphi \in \Htilde^1(\pa F)$ satisfying
\begin{equation}
\min_{\eta\in\OO_E} \norma{\varphi - \langle\eta\,|\,\nu_F \rangle }_{L^2(\pa F)} \geqslant \theta \norma{\varphi}_{L^2(\pa F)}
\end{equation}
where the vector subspace $\OO_E\subseteq\R^4$ was defined in formula~\eqref{IIeq}.
\end{lem}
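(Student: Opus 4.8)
The statement is contained in Proposition~2.35 of~\cite{DDMsurvey} (which in turn generalizes Lemma~2.6 of~\cite{AcFuMoJu} to arbitrary dimension), and my plan is to reconstruct its two--step argument: first prove the coercivity for $F=E$ itself, as a spectral--gap consequence of strict stability, and then transfer it to nearby sets $F$ by a compactness argument, transplanting competitors from $\pa F$ to $\pa E$ through the normal--graph diffeomorphism.

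For the base case $F=E$ I would start from the fact that, $E$ being critical, $\HHH$ is constant on $\pa E$, so the Simons--type identity gives $\Delta\nu_E=-|\BBB|^2\nu_E$ and hence every function $\langle\eta\,|\,\nu_E\rangle$, $\eta\in\R^4$, solves the Jacobi equation $\Delta\langle\eta\,|\,\nu_E\rangle+|\BBB|^2\langle\eta\,|\,\nu_E\rangle=0$; therefore $T(\pa E)$ lies in the kernel of the symmetric bilinear form polarizing $\Pi_E$, so $\Pi_E$ is block--diagonal for the splitting~\eqref{decomp}, i.e. $\Pi_E(\varphi)=\Pi_E(\varphi^\perp)$, where $\varphi^\perp$ denotes the $L^2(\pa E)$--projection of $\varphi$ onto $\Tort(\pa E)$. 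Next I would show that
$$
\mu_1:=\inf\bigl\{\Pi_E(\psi)\,:\,\psi\in\Tort(\pa E),\ \norma{\psi}_{L^2(\pa E)}=1\bigr\}
$$
is attained: since $\Pi_E(\psi)\geqslant\norma{\grad\psi}_{L^2(\pa E)}^2-\norma{\BBB}_{L^\infty(\pa E)}^2\norma{\psi}_{L^2(\pa E)}^2$, a minimizing sequence is bounded in $H^1(\pa E)$, hence, up to a subsequence, converges weakly in $H^1$ and strongly in $L^2$ to some $\psi_\infty\in\Tort(\pa E)$ with unit $L^2$--norm, and weak lower semicontinuity of the Dirichlet energy together with strong $L^2$--convergence of the zero--order term give $\Pi_E(\psi_\infty)=\mu_1$; strict stability~\eqref{strettstabile} then forces $\mu_1>0$. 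Finally, since by~\eqref{IIeq} one has $\{\langle\eta\,|\,\nu_E\rangle:\eta\in\OO_E\}=T(\pa E)$ and the $\langle e_i\,|\,\nu_E\rangle$ are $L^2$--orthogonal, the hypothesis $\min_{\eta\in\OO_E}\norma{\varphi-\langle\eta\,|\,\nu_E\rangle}_{L^2(\pa E)}\geqslant\theta\norma{\varphi}_{L^2(\pa E)}$ is exactly $\norma{\varphi^\perp}_{L^2(\pa E)}\geqslant\theta\norma{\varphi}_{L^2(\pa E)}$, whence $\Pi_E(\varphi)=\Pi_E(\varphi^\perp)\geqslant\mu_1\norma{\varphi^\perp}_{L^2(\pa E)}^2\geqslant\mu_1\theta^2\norma{\varphi}_{L^2(\pa E)}^2$; so the lemma holds for $F=E$ with $\sigma_\theta=\mu_1\theta^2$.

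To pass to a general $F$ I would argue by contradiction: if the claim failed for some $\theta\in(0,1]$, there would be smooth sets $F_j\in\mathfrak{C}^1_{2M_E}(E)$ with $\vol(F_j\triangle E)\to0$ and $\int_{\pa F_j}|\grad\HHH|^2\,d\mu_j\to0$, and $\varphi_j\in\Htilde^1(\pa F_j)$ with $\norma{\varphi_j}_{L^2(\pa F_j)}=1$, $\min_{\eta\in\OO_E}\norma{\varphi_j-\langle\eta\,|\,\nu_{F_j}\rangle}_{L^2(\pa F_j)}\geqslant\theta$ and $\Pi_{F_j}(\varphi_j)<1/j$. By the uniform Calder\'on--Zygmund estimates of~\cite{DDM} and the argument bounding the mean curvature in the proof of Lemma~\ref{w42conv}, the functions $\psi_j:\pa E\to\R$ representing $\pa F_j$ as normal graphs on $\pa E$ are bounded in $W^{3,2}(\pa E)$ and, since $\vol(F_j\triangle E)\to0$, converge (up to a subsequence) to $0$ in $W^{2,q}(\pa E)$ for every $q<6$ and in $C^1$; in particular the induced metrics, measures and second fundamental forms of $\pa F_j$ converge to those of $\pa E$ in the relevant norms, and $\BBB$ on $\pa F_j$ converges in $L^3$. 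From $\Pi_{F_j}(\varphi_j)<1/j$, the uniform Sobolev embedding $H^1\hookrightarrow L^6$ and the $L^3$--equi--integrability of $\BBB$ along $\pa F_j$, I would deduce a uniform bound on $\norma{\grad\varphi_j}_{L^2(\pa F_j)}$; pulling the $\varphi_j$ back to $\pa E$ through the graph diffeomorphisms and subtracting their means (which vanish in the limit) produces $\widehat\varphi_j\in\Htilde^1(\pa E)$, bounded in $H^1(\pa E)$, with $\widehat\varphi_j\wto\varphi_\infty$ in $H^1(\pa E)$, $\widehat\varphi_j\to\varphi_\infty$ in $L^2(\pa E)$ and $\norma{\varphi_\infty}_{L^2(\pa E)}=1$. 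Passing to the limit in the constraint, using $\langle\eta\,|\,\nu_{F_j}\rangle\to\langle\eta\,|\,\nu_E\rangle$ in $L^2$, gives $\min_{\eta\in\OO_E}\norma{\varphi_\infty-\langle\eta\,|\,\nu_E\rangle}_{L^2(\pa E)}\geqslant\theta$; the convergence of metrics and of $|\BBB|^2$ (a strong $L^{3/2}$ factor against the weakly $L^3$--convergent $\widehat\varphi_j^2$) together with weak lower semicontinuity of the Dirichlet energy gives $\Pi_E(\varphi_\infty)\leqslant\liminf_j\Pi_{F_j}(\varphi_j)\leqslant0$ — contradicting the base case, which forces $\Pi_E(\varphi_\infty)\geqslant\mu_1\theta^2>0$. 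Hence the constants $\sigma_\theta>0$ and $\delta_0>0$ exist as claimed.

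I expect the technical core to be the transfer step, and specifically the point of extracting from the hypotheses ($F\in\mathfrak{C}^1_{2M_E}(E)$ together with $\vol(F\triangle E)$ and $\int_{\pa F}|\grad\HHH|^2$ small) enough uniform regularity of $\pa F$ — essentially $W^{2,q}$--closeness to $\pa E$ with $q\geqslant3$ — so that $\Pi_F$ really ``converges'' to $\Pi_E$ under transplantation; this relies on the uniform Sobolev and Calder\'on--Zygmund inequalities of~\cite{DDM} and on the localization--representation estimates already used in Lemma~\ref{w42conv}. The base case is the standard eigenvalue/compactness argument, and it is the only place where strict stability, rather than mere stability, is used.
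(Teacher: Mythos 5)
Your proposal is correct, but it takes a genuinely different route from the paper. The paper's proof of this lemma is deliberately a two--line citation argument: it invokes Proposition~2.35 of~\cite{DDMsurvey} as a black box (which asserts the coercivity~\eqref{2.13} whenever $\|\psi_F\|_{W^{2,p}(\pa E)}$ is below a threshold $C(\theta,p)$ for some fixed $p>3$), and then observes that the hypotheses $\vol(F\triangle E)\leqslant\delta_0$ and $\int_{\pa F}|\nabla\HHH|^2\,d\mu\leqslant\delta_0$ imply the required $W^{2,p}$--smallness of $\psi_F$ through the modulus $\omega$ of Lemma~\ref{w42conv} and Sobolev embedding. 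You instead reconstruct from scratch the content of that cited proposition: a spectral--gap argument for $F=E$ (the Jacobi--field identity $\Delta\nu_E+|\BBB|^2\nu_E=0$, valid since $\HHH$ is constant, shows $T(\pa E)$ annihilates the polarized form, so $\Pi_E(\varphi)=\Pi_E(\varphi^\perp)$, and the gap $\mu_1>0$ on $\Tort(\pa E)$ follows by the direct method and strict stability), followed by a compactness/contradiction transfer to nearby $F$ via the normal--graph diffeomorphism. Both arguments are valid; the key insight you correctly isolate is that the hard analytic input is exactly the $W^{2,p}$--closeness, $p>3$, supplied by Lemma~\ref{w42conv}, which is also the only thing the paper actually checks. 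Your version is self--contained but duplicates work the paper factors through the literature.

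One small simplification worth noting in the transfer step: under hypotheses~\eqref{stima000}, Lemma~\ref{w42conv} already yields a \emph{uniform $L^\infty$ bound} on $\BBB$ over $\pa F$. This makes the $L^3$--equi--integrability device you use to absorb $\int|\BBB|^2\varphi_j^2$ and bound $\|\nabla\varphi_j\|_{L^2(\pa F_j)}$ unnecessarily delicate; the crude estimate $\int|\BBB|^2\varphi_j^2\leqslant\|\BBB\|_{L^\infty}^2\|\varphi_j\|_{L^2}^2=\|\BBB\|_{L^\infty}^2$ suffices directly.
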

\begin{proof}
Representing the boundary of $F\in\mathfrak{C}^{1}_{2M_E}(E)$ as $\pa F= \{y + \psi_F (y) \nu_E(y) \, : \, y \in \pa E\}$ for a smooth function $\psi_F:\partial E\to\R$, according to Proposition~2.35 in~\cite{DDMsurvey}, fixed some $p>3$, there exists a positive constant $C=C(\theta,p)$ such that the conclusion follows if $\Vert\psi_F\Vert_{W^{2,p}(\pa E)}\leqslant C$.
This inequality follows if conditions~\eqref{stima000} hold with $\delta_0$ small enough, by the properties of the function $\omega$ stated in Lemma~\ref{w42conv} (and Sobolev embeddings).
\end{proof}

We now want to apply this lemma with $F=E_t$ and $\varphi=\Delta\HHH$, for all $t\in [0, T(E_0))$, hence, we need to show that there exists a small constant $\theta > 0$ such that
\begin{equation} \label{not a translation}
\min_{\eta\in\OO_E} \norma{ \Delta \HHH- \langle\eta\,|\,\nu_t\rangle }_{L^2(\pa E_t)}\geqslant \theta\norma{\Delta \HHH}_{L^2(\pa E_t)}\qquad\text{for all }t\in [0, T(E_0))\,.
\end{equation}
Considering the special basis $\{e_i\}$ of $\R^4$ and the associated set $i\in\II_E$ in the discussion just after Definition~\ref{str stab}, by the properties of the function $\omega$ stated in Lemma~\ref{w42conv}, if $\delta_0$ is small enough we have that for every $t\in[0,T(E_0))$ the norm $\Vert\psi_F\Vert_{W^{4,2}(\pa E)}$ is small, hence the same holds for $\Vert\psi_F\Vert_{C^{1}(\pa E)}$. Then, it follows that there exists a constant $C_0=C_0(E,M_E)>0$ such that, for every $i\in\II_E$, we have $\Vert \langle e_i\,|\,\nu_t\rangle\Vert_{L^2(\pa E_t)}\geqslant C_0>0$, holding $\Vert \langle e_i\,|\,\nu_E\rangle\Vert_{L^2(\pa E)}>0$ (notice that this argument also shows that, with an appropriate choice of small $\delta_0$ and $\delta$, the condition $\limsup_{t\to T(E_0)}\|\psi_t\|_{C^{1}(\pa E)}=2M_E$ cannot occur). It is then easy to see that the vector $\eta_t\in\OO_E$ realizing the above minimum for $E_t$ is unique and satisfies
\begin{equation} \label{not a translation 2}
\Delta \HHH = \langle \eta_t\,|\, \nu_t \rangle + f,
\end{equation} 
where $f\in L^2(\pa E_t)$ is a function $L^2$--orthogonal (with respect to the measure $\mu_t$ on $\partial E_t$) to the vector subspace of $L^2(\pa E_t)$ spanned by the functions $\langle e_i\,|\,\nu_t\rangle$. Moreover, letting $\eta_t= \eta^i_t e_i$, from relation~\eqref{not a translation 2} we have
\begin{equation}\label{eqcar50002}
\norma{\Delta \HHH}_{L^2(\pa E_t)}^2 \geqslant \norma{ \langle \eta_t\,|\,\nu_t\rangle}_{L^2(\pa E_t)}^2 = \int_{\pa E_t} \vert\eta^i_t \langle e_i\,|\,\nu_t\rangle \vert^2 \, \dmu_t \geqslant C_0^2 |\eta^i_t|^2 = C\vert \eta_t \vert^2 \, ,
\end{equation}
where $C$ is a constant depending only on $E$ and $M_E$.\\
We now argue by contradiction, assuming $\|f\|_{L^2(\pa E_t)} < \theta \norma{\Delta \HHH}_{L^2(\pa E_t)}$.\\
Integrating by parts and using the Cauchy--Schwarz inequality, we have
\begin{align}
\norma{\grad \HHH}^2_{L^2(\pa E_t)}&= \int_{\pa E_t}\abs{\grad \HHH}^2\, \dmu_t 
=- \int_{\pa E_t}\HHH \Delta \HHH \, \dmu_t =-\int_{\pa E_t}(\HHH- \overline \HHH) \Delta \HHH \, \dmu_t \nonumber
\\&\leqslant \norma{\HHH-\overline \HHH}_{L^2(\pa E_t)}\norma{\Delta \HHH}_{L^2(\pa E_t)} \,, \label{de02}
\end{align}
hence, thanks to inequality~\eqref{Poincareunif}, it follows
\begin{equation}\label{de02bis}
\norma{\grad \HHH}_{L^2(\pa E_t)} \leq C \norma{\Delta \HHH}_{L^2(\pa E_t)}\, .
\end{equation}

Thus, by multiplying relation~\eqref{not a translation 2} by $\HHH-\overline \HHH$ and integrating over $\pa E_t$, we get
\begin{align}
\biggl \rvert \int_{\partial E_t} (\HHH -\overline \HHH)\Delta \HHH \, \dmu_t \biggl \lvert &= 
\biggl \rvert \int_{\partial E_t} (\HHH -\overline \HHH)f \, \dmu_t \biggl \rvert \\
&< \theta \norma{ \HHH -\overline \HHH}_{L^2(\partial E_t)} \norma{ \Delta \HHH }_{L^2(\pa E_t)}\\
&\leqslant C\theta\norma{ \Delta \HHH }_{L^2(\pa E_t)}^2 \, ,\label{de04}
\end{align}
where the equality follows from the identities
\beq\label{ident}
\int_{\pa E_t}\HHH \, \nu_t\, \dmu_t=0\qquad\text{ and }\qquad \int_{\pa E_t}\nu_t\, \dmu_t=0
\eeq
holding for every embedded hypersurface. We notice that the first identity is a consequence of relation~\eqref{lap} and the divergence theorem, while the second is equality~\eqref{divnorm}. Then, recalling equality~\eqref{not a translation 2}, estimate~\eqref{eqcar50002} and the fact that $f$ is $L^2$--orthogonal to $\langle \eta_t\,|\, \nu_t \rangle$, we have
$$
\begin{aligned}
\norma{ \langle \eta_t\,|\, \nu_t \rangle }^2_{L^2(\pa E_t)}&=\int_{\pa E_t}\Delta \HHH \langle \eta_t\,|\,\nu_t \rangle \, \dmu_t\\
&= - \int_{\partial E_t} g^{ij} \nabla_i \HHH\nabla_j \langle \eta_t\,|\, \nu_t\rangle \rangle \, \dmu_t\\
&\leqslant\abs{\eta_t}\norma{\nabla\nu_t}_{L^2(\pa E_t)} \norma{ \nabla \HHH }_{L^2(\pa E_t)}\\
&\leqslant C \norma{ \Delta \HHH}_{L^2(\pa E_t)} \norma{\nabla\nu_t}_{L^2(\pa E_t)}\biggl\vert\int_{\partial E_t} (\HHH -\overline \HHH)\Delta \HHH \, \dmu_t\, \biggr\vert^{1/2}\\
&\leqslant C \sqrt{\theta}\norma{\Delta \HHH}_{L^2(\pa E_t)}^2 \,,
\end{aligned}
$$
where in the last inequality we used equality~\eqref{de04} and we estimated $\norma{\nabla\nu_t}_{L^2(\pa E_t)}$ by inequality~\eqref{eqcar50005} and the fact that $\mathcal{F}(t)\leqslant \delta_0$, as $\nabla\nu_t=\BBB$ by the Gauss--Weingarten relations~\eqref{GW}.\\
If then $\theta>0$ is chosen so small that $C\sqrt{\theta} <1-\theta^2$ in the last inequality, we have a contradiction since equality~\eqref{not a translation 2} and the fact that $\|f\|_{L^2(\pa E_t)}<\theta \norma{\Delta \HHH}_{L^2(\pa E_t)}$ imply (by $L^2$--orthogonality) that
$$
\|\langle \eta_t\,|\, \nu_t\rangle\|^2_{L^2(\pa E_t)}>(1-\theta^2)\norma{\Delta \HHH}_{L^2(\pa E_t)}^2\,.
$$
All this argument shows that with such a choice of $\theta$, condition~\eqref{not a translation} holds, hence by Lemma~\ref{stimaPi} we conclude
\begin{equation}\label{de03}
\Pi_{E_t}(\Delta \HHH)\geqslant\sigma_{\theta} \| \Delta \HHH\|_{L^2(\pa E_t)}^2\qquad \text{ for all $t\in [0, T(E_0))$.}
\end{equation}
Then, estimate~\eqref{de02bis} clearly proves assumption~\eqref{claim} and the proof of monotonicity of $\mathcal E$ in Step~$1$ is concluded. Hence, if $\delta$ is small enough, $\mathcal E(t)$ remains bounded by $\delta$ during the flow, up to the time $t=T(E_0)$, thus the same clearly holds for ${\mathcal F}(t)$.

\medskip

\noindent \textbf{Step ${\mathbf 3}$} (\textit{Global existence of the flow})\textbf{.}\\
We have seen at Step~1 that choosing an appropriate constant $K$, if $\delta$ is small enough, then the ``energy'' $\mathcal E(t)$ is uniformly bounded and decreasing. More precisely, integrating the differential inequality~\eqref{stimadec}, there holds
\begin{equation}\label{final00}
\mathcal E(t)\leqslant \mathcal E(0)e^{-t/K}\leqslant \delta e^{-t/K}\leqslant\delta
\end{equation}
hence, we also have $\mathcal F(t)\leqslant \delta e^{-t/K}\leqslant\delta$, for every $t\in[0,T(E_0))$.\\
Moreover, at Step~2 we already saw that if $\delta_0$ is chosen small enough,
$$
\limsup_{t\to T(E_0)}\|\psi_t\|_{C^{1}(\pa E)}=2M_E
$$
is not possible. Hence, in order to obtain the global existence of the flow, we only have to show that also 
\beq\label{eqcar878}
\limsup_{t\to T(E_0)}\vol(E_t\triangle E)=\delta_0
\eeq
cannot occur.

We define the following quantity
\begin{equation}\label{D(F)0}
D(t)=\int_{E_t\triangle E}d(x, \pa E)\, dx=\int_{E_t} d_E(x)\, dx-\int_E d_E(x)\, dx,
\end{equation}
where $d_E:N_\varepsilon\to\R$ is the signed distance function defined in formula~\eqref{sign dist}. We observe that,
$$
\vol({E_t}\triangle E)\leqslant C\|\psi_t\|_{L^1(\pa E)} \leqslant C\|\psi_t\|_{L^2(\pa E)}
$$
and 
\begin{align}
\|\psi_t\|_{L^2(\pa E)}^2&=2\int_{\pa E} \int_0^{|\psi_t(y)|} t \, dt\,d\mu(y) \\
&=2\int_{\pa E} \int_0^{|\psi_t(y)|} d(L(y,t),\pa E) \, dt\,d\mu(y) \\
&=2\int_{E_t\triangle E} d(x,\pa E)\,JL^{-1}(x)\,dx\\
&\leqslant C D(t)\,,
\end{align}
where the constants depend on $E$ and $M_E$, $L:\partial E\times (-\eps,\eps)\to N_\eps$ is the smooth diffeomorphism defined in formula~\eqref{eqcar410} and $JL$ is its Jacobian. It clearly follows
\begin{equation}\label{D(F)}
\vol({E_t}\triangle E)\leqslant C\|\psi_t\|_{L^1(\pa E)} \leqslant C\|\psi_t\|_{L^2(\pa E)}\leqslant C\sqrt{D(t)}\,,
\end{equation}
and 
\begin{equation}\label{D(F)0bis}
D(t)\leqslant \int_{{E_t}\triangle E}2M_E\, dx=2M_E\vol({E_t}\triangle E)\,.
\end{equation}
Then, recalling formula~\eqref{D(F)0}, we compute
\begin{align}
\frac{d}{dt}D(t)=&\,\frac{d}{dt}\int_{E_t\triangle E} \!\!d(x,\pa E)\, dx=\int_{\pa E_t}\!\!d_E\, \Delta \HHH\, \dmu_t\leqslant C\norma{\Delta\HHH}_{L^2(\pa E_t)}\leqslant C\sqrt{\delta} \,{e}^{-t/2K},
\end{align}
for all $t \leqslant T(E_0)$, where the last inequality clearly follows from the above estimate~\eqref{final00} for ${\mathcal E}(t)$.\\
By integrating this differential inequality on $[0,\overline{t})$ with $\overline{t} \in [0,T(E_0))$ and taking into account estimate~\eqref{D(F)}, we get
\begin{align}
\vol(E_{\overline{t}}\triangle E) & \leqslant C\|\psi_{\overline{t}}\|_{L^2(\pa E)}\leqslant C\sqrt{D(0)+2KC\sqrt{\delta}\,}\leqslant C\sqrt[4]{\delta}\,,\label{step33_2}
\end{align}
as $D(0)\leqslant C\vol(E_0\triangle E)\leqslant C\delta$, by inequality~\eqref{D(F)0bis} with $t=0$. Hence, if $\delta>0$ is small enough such that $C\sqrt[4]{\delta}<\delta_0$, we have that also condition~\eqref{eqcar878} cannot happen.

We conclude that the surface diffusion flow of $E_0$ exists smooth for every time, moreover $E_t\in \mathfrak{C}^{1}_{2M_E}(E)$ and
\begin{equation}\label{final}
\vol(E_{t}\triangle E)\leqslant C\sqrt[4]{\delta}\,,\qquad\qquad\int_{\pa E_t} |\grad^2\HHH|^2\, d\mu_t+ \int_{\pa E_t} |\nabla \HHH|^2\, d\mu_t\leqslant \delta e^{-t/K}\,,
\end{equation}
for every $t\in[0,+\infty)$.

\medskip

\noindent \textbf{Step ${\mathbf 4}$} (\textit{Convergence, up to a subsequence, to a translate of $E$})\textbf{.}\\
Let $t_i\to +\infty$, then by estimates~\eqref{final}, the sets $E_{t_i}$ satisfy the hypotheses of the last point of Lemma~\ref{w42conv}, hence, up to a (not relabeled) subsequence, we have that there exists a critical set $E'\in \mathfrak{C}^{1}_{2M_E}(E)$ such that $E_{t_i}\to E'$ in $W^{3,p}$, that is $\|\psi_{t_i}-\psi\|_{W^{3,p}(\pa E)}\to0$ for some $\psi:\pa E\to\R$ representing $\pa E'$ as a ``normal graph'' on $\pa E$. As $\pa E'$ has constant mean curvature and it is a graph over $\pa E$ of a $C^{1}$ function (by Sobolev embeddings), it follows by standard regularity theory for quasilinear equations that it is smooth (see~\cite{GT} for instance), then by Proposition~2.7 in~\cite{AcFuMoJu} (see also~\cite[Proposition~2.36]{DDMsurvey}), we have that $E'=E+\eta$ for some (small) $\eta\in \R^4$. Such proposition actually states that $E$ is a strict local minimum for the volume--constrained Area functional, up to translations and that a smooth set ``close enough'' to $E$ (as $E'$ in our situation) can be a critical set if and only if it is a translate of $E$.

\medskip

\noindent \textbf{Step ${\mathbf 5}$} (\textit{Smooth exponential convergence of the full sequence})\textbf{.}\\
Arguing similarly as above, we consider the function
$$
\overline{D}(t)=\int_{E_t\triangle E'}d(x,\pa E)\, dx
$$
with derivative
\begin{equation}\label{step6-}
\frac{d}{dt}\overline{D}(t)=\frac{d}{dt}\int_{E_t\triangle E'} d(x,\pa E)\, dx\,=\int_{\pa E_t}\mathrm{sgn}(\psi_t-\psi)\,d_{\pa E}\, \Delta \HHH\, \dmu_t\,,
\end{equation}
where $\mathrm{sgn}$ is the ``sign function''. By the exponential second estimate~\eqref{final} and the fact that $E_t\in \mathfrak{C}^{1}_{2M_E}(E)$, we have
\begin{equation}\label{step6}
\biggl\vert\frac{d}{dt}\overline{D}(t)\biggl\vert\,\leqslant C\norma{\Delta\HHH}_{L^2(\pa E_t)}\leqslant C\sqrt{\delta} \,{e}^{-t/2K}
\end{equation}
for all $t\geqslant0$, moreover, 
\begin{equation}\label{D(F)0tris}
\overline{D}(t)\leqslant \int_{{E_t}\triangle E'}2M_E\, dx=2M_E\vol({E_t}\triangle E')\leqslant C\|\psi_t-\psi\|_{L^1(\pa E)} \leqslant C\|\psi_t-\psi\|_{L^2(\pa E)}
\end{equation}
which implies $\overline{D}(t_i)\to 0$, as $i\to\infty$, by the previous step.\\
Integrating the differential inequality~\eqref{step6-}, we get
\begin{align}\label{step61-}
\overline{D}(t)-\overline{D}(t_i)=&\,-\int_t^{t_i}\frac{d}{ds} \overline{D}(s)\,ds\leqslant\int_t^{+\infty}\biggl\vert\frac{d}{ds} \overline{D}(s)\,ds\,\biggl\vert\,\leqslant\int_t^{+\infty}C\sqrt{\delta} e^{-s/2K}\, ds\\
\leqslant&\, \,2CK\sqrt{\delta}e^{-t/2K}\, ,
\end{align}
hence, passing to the limit as $i\to\infty$, we conclude
\begin{equation}\label{step61}
\overline{D}(t)\leqslant Ce^{-t/2K}
\end{equation}
for every $t\geqslant 0$, thus $\lim_{t\to+\infty}\overline{D}(t)=0$. Then, we have
\begin{align}
\|\psi_t-\psi\|_{L^2(\pa E)}^2&=2\int_{\pa E} \biggl\vert\int_{0}^{\psi_t(y)-\psi(y)} s\, ds\,\biggl\vert\,d\mu(y) \\
&=2\int_{\pa E} \biggl\vert\int_{0}^{\psi_t(y)-\psi(y)} d(L(y,s),\pa E) \, ds\,\biggl\vert\,d\mu(y) \\
&=2\int_{E_t\triangle E'} d(x,\pa E)\,JL^{-1}(x)\,dx\\
&\leqslant C \overline{D}(t)\\
&\,\leqslant Ce^{-t/2K}\,,
\end{align}
where $L:\partial E\times (-\eps,\eps)\to N_\eps$ is, as before, the smooth diffeomorphism defined in formula~\eqref{eqcar410} with Jacobian $JL$. By this exponential decay and the uniform bound on $\|\psi_t-\psi\|_{W^{4,2}(\pa E)}$ following from estimates~\eqref{final} by means of Lemma~\ref{w42conv}, we obtain the convergence of the full sequence $E_t$ to $E'$ in $W^{3,p}$.\\
Finally, we have that the convergence of $E_t\to E+\eta$ is actually exponentially smooth, by  arguing as in the proof of Theorem~5.1 in~\cite{FusJulMor18} (see also~\cite{DDKK}), that is, via standard parabolic estimates and uniform interpolation inequalities (and Sobolev embeddings), holding the exponential convergence in $W^{3,p}$.
\end{proof}

\section{General dimensions}
In general dimension $n\in\N$, the main global existence and stability Theorem~\ref{existence2} takes the following form.

\begin{thm}\label{existence3}~\cite[Theorem~3.3.14]{DianaPhD}
Let $E\subseteq\T^n$, for $n\geqslant 3$, be a strictly stable critical set for the Area functional under a volume constraint and let $N_\eps$ be a tubular neighborhood of $\pa E$, as in formula~\eqref{tubdef}. For $M_E<\eps/2$ small enough, there exists $\delta>0$ such that, if $E_0$ is a smooth set in $\mathfrak C^{1}_{M_E}(E)$ satisfying $\vol(E_0)= \vol(E)$ and
\beq\label{eqcar777n}
\vol(E_0\triangle E)\leqslant\delta\qquad\text{ and }\qquad\int_{\pa E_0} \vert \nabla^{n-2}\HHH_0\vert^2\, \dmu_0 + \int_{\pa E_0} \vert \nabla \HHH_0\vert^2\, \dmu_0 \leqslant \delta\, ,
\eeq
then, the unique smooth surface diffusion flow $E_t$ starting from $E_0$, given by Proposition~\ref{th:EMS1}, is defined for all $t\geqslant0$. Moreover, $E_t$ converges smoothly to $E'=E+\eta$ exponentially fast as $t\to +\infty$, for some $\eta\in \R^n$, with the meaning that the sequence of smooth functions $\psi_t : \pa E \to \R$ representing $\pa E_t$ as ``normal graphs'' on $\pa E$, that is,
$$
\pa E_t= \{ y+ \psi_{t} (y) \nu_{E}(y) \, : \, y \in \pa E\},
$$
satisfy, for every $k\in\N$,
$$
\Vert \psi_{t}-\psi\Vert_{C^k(\pa E)}\leqslant C_ke^{-\beta_k t}\, ,
$$
for every $t \in [0, +\infty)$, for some positive constants $C_k$ and $\beta_k$, where $\psi: \pa E\to \R$ represents $\pa E'=\pa E+\eta$ as a ``normal graph'' on $\pa E$.
\end{thm}

The case $n=2$, where the boundary hypersurfaces of strictly stable critical sets are circles or straight curves in $\T^2$, was analyzed by Elliott and Garcke in~\cite{EllGar}. In the three--dimensional case, this theorem is Theorem~4.3 in the paper~\cite{AcFuMoJu} by~Acerbi, Fusco, Julin and Morini. We now list the appropriate modification to the line of proof in dimension $n=4$ in order to deal with the general dimensional case, noticing that actually up to Proposition~\ref{computeenvar} all the statements are $n$--dimensional. See the PhD thesis of Antonia Diana~\cite{DianaPhD} for the full details.

\begin{itemize}
\item[\ding{111}] {\em Proposition~\ref{computeenvar} in dimension $n\in\N$}.
\smallskip

\noindent Equation~\eqref{energyfinale} must be substituted by 
\begin{align}
\frac{d}{dt}   \int_{\pa E_t} \abs{\grad^{n-2}\HHH}^2 \, \dmu_t  =&- 2  \int_{\pa E_t} |\nabla^{n}\HHH|^2 \, \dmu_t+\int_{\pa E_t} \qol^{2n+2}(\nabla^{n-4}\BBB,\nabla^{n-1}\HHH)\,\dmu_t
\\&+\int_{\pa E_t} \qol^{2n+2}(\nabla^{n-3}\BBB,\nabla^{n}\HHH)\,\dmu_t\label{nequz}
\end{align}
where
\begin{itemize}
\item[$\bullet$] every ``monomial'' of $\qol^{2n+2}(\grad^{n-4}\BBB,\nabla^{n-1}\HHH)$ has $4$ factors in $\BBB$, $\nabla\HHH$ and their covariant derivatives. The factor $\BBB$ (or $\HHH$ without derivatives) or one of its covariant derivatives up to $\nabla^{n-4}\BBB$ is present exactly one time and the other three factors are derivatives of $\nabla\HHH$ up to $\nabla^{n-1}\HHH$, with $\grad^{n-1} \HHH$ or $\grad^{n-2} \HHH$ present at least one time. Moreover, if the factor $\grad^{n-1} \HHH$ is not present, $\BBB$ cannot appear without derivatives;
\item[$\bullet$] every ``monomial'' of $\qol^{2n+2}(\nabla^{n-3}\BBB,\nabla^{n}\HHH)$ has $4$ factors in $\BBB$, $\grad \HHH$ and their covariant derivatives. The factor $\BBB^2$ or one of its covariant derivatives up to $\nabla^{n-4}(\BBB * \nabla \BBB)$ is present exactly one time, the other two factors are derivatives of $\nabla\HHH$ up to $\grad^n \HHH$. The factor $\grad^{n} \HHH$ is present exactly one time, with the exception of ``monomials'' of kind $\nabla^{n-1}\HHH*\BBB^2*\nabla^{n-1}\HHH$.
\end{itemize}

\medskip

\noindent The proof of this formula can be obtained by following step by step the proof of Proposition~\ref{computeenvar}, with the appropriate modifications due to the dimension $n$. The only term that needs a slightly different treatment is the analogue of the last one in formula~\eqref{conto3}, coming from the term
$$
-\int_{\pa E_t} g\big(\nabla^{n-2}\HHH, \nabla^{n-2}(\Delta\HHH\vert\BBB\vert^2)\big)\,\dmu_t
$$
which would appear in the right hand side of the first line of the $n$--dimensional version of computation~\eqref{conto2}. Integrating by parts two times, the integrand becomes a contraction of $\nabla^{n}\HHH$ with $\nabla^{n-4}(\Delta\HHH\vert\BBB\vert^2)$, which clearly is a ``polynomial'' of the form $ \qol^{2n+2}(\nabla^{n-3}\BBB,\nabla^{n}\HHH)$. We refer to~\cite[Lemma~3.3.9]{DianaPhD} for a complete and detailed proof.

\bigskip

\item[\ding{111}] {\em Proposition~\ref{estimateenvar} in dimension $n\in\N$}.
\smallskip

\noindent Inequality~\eqref{energy1} must be substituted by 

\beq \label{stimaenn}
\frac{d}{dt} \int_{\pa E_t} \abs{\grad^{n-2}\HHH}^2 \, \dmu_t \, \leqslant- \norma{\nabla^{n}\HHH}^2_{L^2(\pa E_t)}+C \norma{\grad \HHH}_{L^2(\pa E_t)}^2
\eeq
for any $\eps>0$, with some constant $C$ depending on $E$, $M_E$, $\eps$, $\Vert \grad^{n-3} \BBB \Vert_{L^{\frac{2n-2}{n-3}}(\pa E)}$ and $\|\BBB\|_{L^\infty(\pa E)}$.

\medskip

\noindent The proof of this inequality goes like in the proof of Proposition~\ref{estimateenvar}, by choosing suitable exponents in the interpolation inequalities in dealing with the two ``polynomial'' terms
$$
\int_{\pa E_t} \qol^{2n+2}(\nabla^{n-4}\BBB,\nabla^{n-1}\HHH)\,\dmu_t\qquad\text{ and }\qquad\int_{\pa E_t} \qol^{2n+2}(\nabla^{n-3}\BBB,\nabla^{n}\HHH)\,\dmu_t
$$
in equality~\eqref{nequz}. In particular, expanding the iterated derivatives of $\BBB^2$ in the second one, one gets factors like $\nabla^i\BBB*\nabla^j\BBB$ with $i+j\leqslant n-3$ that are estimated by the constant $C$, after noticing that we uniformly control with $\Vert \grad^{n-3} \BBB \Vert_{L^{\frac{2n-2}{n-3}}(\pa F)}$ and $\|\BBB\|_{L^\infty(\pa F)}$ all the ``intermediate'' norms. We refer to~\cite[Proposition~3.3.11]{DianaPhD} for a complete and detailed proof.

\bigskip

\item[\ding{111}] {\em Lemma~\ref{w42conv} in dimension $n\in\N$}.
\begin{lem}\label{wn2conv}~\cite[Lemma~3.3.13]{DianaPhD}
Let $E\subseteq\T^n$ be a smooth set and $N_\eps$ be a tubular neighborhood of $\pa E$. For $M_E$ small enough and $\delta>0$, there exists a constant $C=C(E,M_E,\delta)$ such that if $F\in\mathfrak{C}^{1}_{M_E}(E)$ with
\begin{equation}
\pa F= \{y + \psi_F (y) \nu_E(y) \, : \, y \in \pa E\}
\end{equation}
for a smooth function $\psi_F:\partial E\to\R$ and
\beq\label{ex-de02-bis}
\int_{\pa F} |\nabla^{n-2}\HHH|^2\,\dmu + \int_{\pa F} |\nabla \HHH|^2\,\dmu \leqslant \delta\,,
\eeq
there hold
\begin{equation}
\|\BBB\|_{L^\infty(\pa F)}+\|\nabla^{n-3}\BBB\|_{L^{\frac{2n-2}{n-3}}(\pa F)}\leqslant C\qquad\text{ and }\qquad
\|\psi_F\|_{W^{n,2}(\pa E)}\leqslant C\,.\label{eqcar50001-bis}
\end{equation}
Moreover, for every $1\leqslant p<\frac{2n-2}{n-3}$, there exists a monotone non--decreasing function $\omega:\R^+\to\R^+$, depending only on $E$ and $M_E$, with $\lim_{\delta\to0^+}\omega(\delta)=0$ and such that if $F$ satisfies the further condition
$$
\vol(F\triangle E)\leqslant \delta\,,
$$
then $\Vert\psi_F\Vert_{W^{n-1,p}(\pa E)}\leqslant\omega(\delta)$.\\
As a consequence, if $E_i\subseteq \mathfrak{C}^{1}_{M_E}(E) $ is a sequence of smooth sets such that 
$$
\sup_{i\in\N}\,\int_{\pa E_i}\abs{\nabla^2\HHH}^{n-2}\, d\mu_i+\int_{\pa E_i}\abs{\nabla\HHH}^2\, \dmu_i<+\infty\,,
$$
then there exists a (non necessarily smooth) set $E'\in \mathfrak{C}^{1}_{M_E}(E)$ such that, up to a (non relabeled) subsequence, $E_i\to E'$ in $W^{n-1,p}$ as $i\to\infty$, for all $1\leqslant p<\frac{2n-2}{n-3}$. Moreover, if
$$
\int_{\pa E_i}|\nabla^{n-2}\HHH|^2\, d\mu_i+\int_{\pa E_i}|\nabla\HHH|^2\, d\mu_i \to 0\,,
$$
as $i\to\infty$, the set $E'$ is critical for the volume--constrained Area functional, that is, its mean curvature is constant.
\end{lem}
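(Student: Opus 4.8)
The plan is to follow \emph{mutatis mutandis} the proof of Lemma~\ref{w42conv}, simply replacing the dimension--dependent Sobolev exponents of the case $n=4$ by their $n$--dimensional analogues (and noting that the borderline case $n=3$, where $\tfrac{2n-2}{n-3}$ is to be read as $+\infty$ and $\nabla^{n-3}\BBB=\BBB$, is the one already treated in~\cite{AcFuMoJu}). The boundaries $\pa F$ are now $(n-1)$--dimensional, so the relevant uniform Sobolev embeddings on a compact manifold without boundary are $W^{1,q}\hookrightarrow C^0$ for $q>n-1$ and, at the critical exponent, $W^{1,2}\hookrightarrow L^{\frac{2(n-1)}{n-3}}$; the latter is exactly the source of the exponent $\frac{2n-2}{n-3}$ appearing in the statement.

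First I would bootstrap from the energy bound~\eqref{ex-de02-bis}. Starting from $\nabla^{n-2}\HHH\in L^2(\pa F)$ and iterating the uniform Sobolev inequality~\eqref{sob} (extended to tensors as in Proposition~\ref{interptensor}), one obtains $\|\nabla^{j}\HHH\|_{L^{p_j}(\pa F)}\leqslant C\sqrt{\delta}$ along a decreasing family of orders, reaching both $\|\nabla\HHH\|_{L^{\frac{2n-2}{n-3}}(\pa F)}\leqslant C\sqrt{\delta}$ and, by one further embedding into $C^0$, $\|\HHH-\overline\HHH\|_{L^\infty(\pa F)}\leqslant C\sqrt{\delta}$ (for $n$ large the top derivative $\nabla^{n-3}\HHH$ already lands in $C^0$, which only helps). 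As in the case $n=4$, I would then control $\overline\HHH$ by the localization--representation argument: cover $\pa F$ by a finite family (depending only on $E$ and $M_E$) of solid cylinders $\mathcal C_k=D_k+\nu_E(x_k)\R$ on which $\pa F$ is the graph of a function $f_k$ with $\|f_k\|_{C^1(D_k)}\leqslant M_E$, so that the divergence theorem gives $\int_{D_k}\HHH\,dx=\int_{\pa D_k}\langle\nu_F\,|\,x/|x|\rangle\,d\sigma$, which is bounded by the area of $\pa D_k$; combining with the smallness of $\|\HHH-\overline\HHH\|_{L^1(\pa F)}$ yields $|\overline\HHH|\leqslant C(E,M_E,\delta)$, hence $\|\HHH\|_{L^q(\pa F)}\leqslant C(E,M_E,\delta)$ for every $q<\infty$.

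With $\HHH$ controlled in $L^q$ for some $q>n-1$, Theorem~3.1 in~\cite{DDM} (valid once $M_E$ is small enough) gives the same uniform control on $\|\BBB\|_{L^q(\pa F)}$, which activates the higher order uniform Calder\'on--Zygmund inequalities~\eqref{HkCZ} of~\cite[Section~3.1]{DDM}, holding in any dimension; applying these with $k=n-2$ yields $\|\nabla^{n-2}\BBB\|_{L^2(\pa F)}\leqslant C(E,M_E,\delta)$, and a last Sobolev step gives $\|\nabla^{n-3}\BBB\|_{L^{\frac{2n-2}{n-3}}(\pa F)}\leqslant C$. Transferring these geometric bounds to the graph function by Breuning's technique~\cite{breun1,breun2,breun3} (which, under a uniform $L^q$--bound on $\BBB$ with $q>n-1$, converts a $W^{k,p}$--bound on $\BBB$ into a $W^{k+2,p}$--bound on $\psi_F$ and vice versa) produces $\|\psi_F\|_{W^{n,2}(\pa E)}\leqslant C$, and uniform Sobolev embeddings then give $\|\nabla^2\psi_F\|_{L^\infty(\pa E)}\leqslant C$, hence $\|\BBB\|_{L^\infty(\pa F)}\leqslant C$, completing~\eqref{eqcar50001-bis}.

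For the $\omega$--statement and the compactness consequences I would argue exactly as in Lemma~\ref{w42conv}. Given a sequence $F_i$ with $\vol(F_i\triangle E)\to0$ and vanishing energy, the uniform bound $\|\psi_{F_i}\|_{W^{n,2}(\pa E)}\leqslant C$ and the compact embedding $W^{n,2}(\pa E)\hookrightarrow W^{n-1,p}(\pa E)$ for $1\leqslant p<\frac{2n-2}{n-3}$ give $\psi_{F_i}\to\psi$ in $W^{n-1,p}(\pa E)$, hence in $L^1(\pa E)$, while $\vol(F_i\triangle E)\to0$ forces $\|\psi_{F_i}\|_{L^1(\pa E)}\to0$, so $\psi\equiv0$, whence the existence of the function $\omega$; the same compactness applied to an arbitrary sequence of smooth sets with uniformly bounded energy produces a subsequential limit $E'\in\mathfrak C^{1}_{M_E}(E)$ in $W^{n-1,p}$ (and in $C^1$), and when the energy tends to zero the limiting mean curvature is constant, i.e. $E'$ is critical. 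The only genuinely new point, and the main thing to verify, is that this chain of Sobolev exponents closes up in every dimension so that exactly $\nabla^{n-3}\BBB\in L^{\frac{2n-2}{n-3}}$ and $\psi_F\in W^{n,2}$ are attained at the end; the detailed bookkeeping is carried out in~\cite[Lemma~3.3.13]{DianaPhD}.
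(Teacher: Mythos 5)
Your proposal is correct and takes exactly the route the paper intends: the authors state Lemma~\ref{wn2conv} without proof, citing~\cite[Lemma~3.3.13]{DianaPhD}, and say that the general--dimensional proofs are obtained by ``following step by step'' the four--dimensional arguments; you have done precisely that, with the Sobolev exponent bookkeeping carried out correctly ($\|\nabla^{n-2}\HHH\|_{L^2}$ gives $\|\nabla^{n-3}\HHH\|_{L^{\frac{2n-2}{n-3}}}$ and, for $n>3$, $\|\HHH-\overline\HHH\|_{L^\infty}$ via iterated Sobolev/Morrey embeddings on the $(n-1)$--dimensional boundary, the cylinder/divergence--theorem estimate on $\overline{\HHH}$, then~\cite{DDM} for $\|\BBB\|_{L^q}$, the uniform Calder\'on--Zygmund inequalities for $\|\nabla^{n-2}\BBB\|_{L^2}$, one more Sobolev step for $\|\nabla^{n-3}\BBB\|_{L^{\frac{2n-2}{n-3}}}$, Breuning's transfer for $\|\psi_F\|_{W^{n,2}(\pa E)}$, and the compact embeddings $W^{n,2}\hookrightarrow W^{n-1,p}\hookrightarrow C^1$ for the $\omega$--statement and compactness).

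One small point of presentation, not of substance: when you write that the chain of embeddings ``reaches $\|\nabla\HHH\|_{L^{\frac{2n-2}{n-3}}}\leqslant C\sqrt{\delta}$'', the exponent $\frac{2n-2}{n-3}$ is the one naturally attached to $\nabla^{n-3}\HHH$ after a single Sobolev step from $\nabla^{n-2}\HHH\in L^2$; for $n\geqslant 5$ you of course get strictly better than $L^{\frac{2n-2}{n-3}}$ for $\nabla\HHH$ (indeed $L^\infty$), so the estimate is true but the exponent is not tight there, only for $\nabla^{n-3}\BBB$ where it genuinely appears in the statement. You might rephrase this so a reader does not infer that $\frac{2n-2}{n-3}$ is the natural integrability exponent for $\nabla\HHH$ in all dimensions. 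Apart from that the argument closes.
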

\end{itemize}

\begin{proof}[Proof of Theorem~\ref{existence3}] As in the proof of Theorem~\ref{existence2}, by choosing $M_E$ small enough in order that all the constants in the inequalities are uniform and after choosing some small $\delta_0$, we consider the surface diffusion flow $E_t$ starting from $E_0 \in \mathfrak C^{1}_{M_E}(E)$ as in the hypotheses and the maximal time $T(E_0)$ such that $E_t \in C^{1}_{2M_E}(E)$,
$$
\vol(E_t\triangle E)\leqslant\delta_0\qquad\text{ and }\qquad\mathcal F_n(t) =\int_{\pa E_t} |\grad^{n-2}\HHH|^2\, d\mu_t+ \int_{\pa E_t} |\nabla \HHH|^2\, d\mu_t\leqslant\delta_0\,,
$$
for every $t \in [0, T(E_0))$.\\
Then, we aim to show that the smooth function $\psi_t$, which represents the moving boundary $\pa E_t$ as a normal graph on $\pa E$, does not satisfy the following conditions:
\begin{itemize}
\item $\limsup_{t\to T(E_0)}\|\psi_t\|_{C^{1}(\pa E)}=2M_E$
\item $\limsup_{t\to T(E_0)}{\mathcal F_n}(t)=\delta_0$
\item $\limsup_{t\to T(E_0)}\vol(E_t\triangle E)=\delta_0$
\end{itemize}
that is, the maximal time $T(E_0)= + \infty$, hence the flow exists for all times.\\
So, we define for a suitable $K>2$ the following ``energy'' function
\beq\label{E(t)n}
\E_n(t) = \int_{\pa E_t} |\nabla^{n-2}\HHH|^2\, \dmu_t+ K \int_{\pa E_t} |\nabla \HHH|^2\, \dmu_t\geqslant\mathcal F_n(t)\, .
\eeq
Recalling Proposition~\ref{estimateenvar} and estimate~\eqref{stimaenn} and noticing that Lemma~\ref{stimaPi} holds in any dimension (as it is shown in~\cite{DDMsurvey}), following Step~1 and Step~2 in the proof of Theorem~\ref{existence2}, we have
\beq
\frac{d}{dt} \E_n (t) \leqslant - \E_n(t)/K
\eeq
for every $t \in [0, T(E_0))$, that is, the energy $\E_n$ (hence $\mathcal F_n$) is uniformly bounded from above and decreasing.
Then, the rest of the proof proceeds as the one of Theorem~\ref{existence2}.
\end{proof}

\begin{remark}
The second assumption~\eqref{eqcar777n} in Theorem~\ref{existence3} is both natural, by analogy with the case $n=4$ and technically convenient, since it provides the correct estimate on $\BBB$ in Lemma~\ref{wn2conv} and yields the appropriate interpolation inequalities required in our analysis. However, the choice of $\grad^{n-2} \HHH$ might not be optimal. Assuming an estimate on a lower order derivative could in principle lead to the same conclusion of asymptotic stability.
\end{remark}

\bigskip

\noindent{\large{\textbf{Data availability statement:}}} This article has no associated data.

\bigskip

\noindent{\large{\textbf{Conflict of interest statement:}}} The authors have no financial or proprietary interests in any material discussed in this article.

\bibliographystyle{amsplain}
\bibliography{SDF}

\end{document}